%%%%%%%%%%%%%%%
% Sample article using the amsart style
% Based on an example provided by AMS
% modified somewhat for use at NIU Math Dept.
%
\documentclass[reqno]{amsart}
\usepackage[all,ps,cmtip]{xy}

\usepackage{amssymb}
\usepackage{mathrsfs}
\usepackage{enumerate}

\usepackage{amsmath}
\usepackage{amsthm}
\usepackage{mathrsfs}
\usepackage{ esint }
\usepackage{graphicx}
\usepackage{color}
\usepackage{bm}
\usepackage{tikz-cd}
\usetikzlibrary{cd}
\usepackage{comment}

\usepackage{stmaryrd}

%% Manual numbering

%% Environment for a proof of a result stated earlier

\begin{document}
	\author{Alexander Lai De Oliveira}

	\title{Lazardian Witt vectors}
	%\swapnumbers
	\newtheorem{theoremx}{Theorem}
	\newtheorem{mainthm}{Theorem}
	\renewcommand{\themainthm}{\Alph{mainthm}}
	\newtheorem{theorem}[subsection]{Theorem}
	\newtheorem*{theorem*}{Theorem}
	\newtheorem*{corollary*}{Corollary}
	\newtheorem{corollary}[subsection]{Corollary}
	\newtheorem{lemma}[subsection]{Lemma}
	\newtheorem{proposition}[subsection]{Proposition}
	
	\theoremstyle{remark}
	\newtheorem{remark}[subsection]{Remark}
	\theoremstyle{definition}
	\newtheorem{definition}[subsection]{Definition}
	\newtheorem{example}[subsection]{Example}

	\setcounter{tocdepth}{1}
	
	\begin{abstract}
		We use Lazard's universal $\pi$-ring to construct a variation of the $\pi$-typical ramified Witt vector functors, which we call the Lazardian Witt vector functor. We then use the Lazardian Witt vector functor to construct the universal residual perfection of a Lazardian algebra.
	\end{abstract}
	\maketitle
	\tableofcontents

	\frenchspacing
	
	\section{Introduction}
	
	The original motivation for this paper was to give a choice-free construction of the universal residual perfection $A^u$ of a complete discrete valuation ring $(A,\mathfrak{m})$ with residue field $k:= A/\mathfrak{m}$ of characteristic $p > 0$, in the nontrivial case of imperfect $k$. Borger~\cite{Bor04} constructs $A^u$ as a certain infinitesimal thickening of the perfection of the polynomial ring
	\begin{equation*}
		k_T:= k[u_{t,n}\mid t \in T,n\geq 1]
	\end{equation*} 
	defined with respect to a choice of lifted $p$-basis $T \subset A$. In other words $k_T^{\operatorname{pf}}$ is a construction of the residue ring $k^u:= A^u/\mathfrak{m}A^u$ of $A^u$. 
	
	Given $k^u$, the infinitesimal thickening used to construct $A^u$ depends on a choice of uniformizer for $A$. For example, in the case of 
	\begin{equation*}
		A_{\text{equal}}:= \mathbf{F}_p(t)\llbracket\pi\rrbracket, \quad A_{\text{mixed}}:=\varprojlim_n\mathbf{Z}[t]_{(p)}/p^{n + 1}\mathbf{Z}[t]_{(p)}
	\end{equation*}
	with $k_T := \mathbf{F}_p(t)[t_1,t_2,\dots]$, one has
	\begin{equation}\label{Example:simpleURP}
		A_{\text{equal}}^u = \mathbf{F}_p(t)[t_1,t_2,\dots]^{\operatorname{pf}}\llbracket\pi\rrbracket,\quad  A_{\text{mixed}}^u = W(\mathbf{F}_p(t)[t_1,t_2,\dots]^{\operatorname{pf}}),
	\end{equation}
	where $W$ is the $p$-typical Witt vector functor. 
	
	The rings $A^u$ in (\ref{Example:simpleURP}) are explicit constructions of an initial object of a certain category introduced by Borger. We give a slightly more general definition.
	\begin{definition}\label{Def:CRP}
		Let $(A,I)$ be a pair consisting of a ring $A$ and a principal ideal $I \subset A$ containing $p$ such that $A$ is $I$-adically complete and $\operatorname{Ann}(I) = I^m$ for some $m \in \mathbf{N}\cup \{\infty\}$, where $I^\infty := 0$. Define the category $\mathsf{CRP}_A$ of \emph{complete residual perfections} of $A$ to be the full subcategory of $A$-algebras $B$ such that:
		\begin{enumerate}[\normalfont (i)]
			\item $B$ is $I B$-adically complete;
			\item $B/I B$ is a perfect $\mathbf{F}_p$-algebra;
			\item The multiplication map $I \otimes_A B \to B$ is injective.
		\end{enumerate}
		An $I$-adically complete $A$-algebra satisfying (iii) is called \emph{strict}. 
	\end{definition}
	\begin{example}\label{Ex:BabyTitling}
		The category $\mathsf{CRP}_{\mathbf{Z}_p}$ is the category of strict $p$-rings (cf.~Serre~\cite[p.~37]{Ser79}) with initial object $\mathbf{Z}_p$. Setting $U:= -\otimes_{\mathbf{Z}_p}\mathbf{F}_p$, the residue functor $U: \mathsf{CRP}_{\mathbf{Z}_p}\to \mathsf{PerfAlg}_{\mathbf{F}_p}$ is an equivalence of categories. The $p$-typical Witt vector functor $W: \mathsf{Alg}_\mathbf{Z}\to \mathsf{Alg}_\mathbf{Z}$ is a left adjoint. More precisely, we have a diagram
		\begin{equation*}
			\begin{tikzcd}
				\mathsf{Alg}_{\mathbf{Z}}\arrow[from = d, "W"]&\mathsf{Alg}_{\mathbf{Z}_p} \arrow[l]\arrow[from = d, "W"]&\mathsf{CRP}_{\mathbf{Z}_p}\arrow[l,hook']\arrow[d, shift left = 1ex, "U"] \\
				 \mathsf{Alg}_{\mathbf{Z}}& \mathsf{Alg}_{\mathbf{F}_p}\arrow[l]&\mathsf{PerfAlg}_{\mathbf{F}_p} \arrow[l,hook']\arrow[u, "W", shift left = 1ex, "\dashv"']
			\end{tikzcd}
		\end{equation*}
		in which the two functors in the right column form an adjoint equivalence.
	\end{example}
	Since $A^u$ is an initial object of $\mathsf{CRP}_A$, the explicit constructions in (\ref{Example:simpleURP}) are independent of both the choice of lifted $p$-basis and uniformizer up to unique $A$-algebra isomorphism. The goal was to give a construction that makes the independence of the choice of lifted $p$-basis manifestly clear by avoiding choices altogether: choice-free as opposed to choice-independent. We were only successful in removing an explicit choice of $p$-basis; our construction of functors relies on a choice of uniformizer. 
	
	While the existence of $A^u$ can be guaranteed by the general adjoint functor theorem in settings slightly more general than with respect to pairs $(A,I)$ as in Definition~\ref{Def:CRP}, the details of which will be dealt with in a separate paper, we are able to give a completely formal construction in the case when $A$ is a strict $\pi$-adically complete algebra over Lazard's universal $\pi$-ring, not to be confused with Lazard's universal ring~\cite[Th\'eor\`eme~II]{Laz55}.
	
	As described by Serre~\cite[Chapter II, \S5]{Ser79}, Lazard~\cite{Laz54} generalized the theory of Witt vectors to the now-classical equivalence of strict $p$-rings and perfect $\mathbf{F}_p$-algebras. Lazard's argument establishes an equivalence of categories by proving that the residue functor $U$ in Example~\ref{Ex:BabyTitling} is fully faithful~\cite[Chapter II, Proposition~10]{Ser79} and essentially surjective~\cite[Chapter II, Theorem~5]{Ser79}, without constructing an explicit quasiinverse. However Lazard does note that the `Wittsche Vektorrechnung' is an `Algorithmus' for calculating the universal $p^{-\infty}$-polynomials, and that these become honest polynomials after normalizing the indeterminates of the polynomial; Serre gives all the details~\cite[Chapter II, \S6]{Ser79}. Though omitted by Serre, Lazard~\cite[pp.~71--72]{Laz54} also dealt with the ramified case by considering the $\pi$-adically complete ring
	\begin{equation}\label{Eqn:LazardOriginalPiRing}
		\mathcal{L}:= \mathbf{Z}[\omega_i^{p^{-\infty}}\mid i \geq 1]\llbracket\pi\rrbracket\Bigg/\left(p-\sum_{i \geq 1} \omega_i \pi^i\right)
	\end{equation}
	with perfect residue ring
	\begin{equation*}
		\underline{\mathcal{L}} := \mathbf{F}_p[\omega_i\mid i \geq 1]^{\operatorname{pf}}.
	\end{equation*}
	We refer to $\mathcal{L}$ as \emph{Lazard's universal $\pi$-ring}. Given an object $O$ of $\mathsf{CRP}_\mathcal{L}$, a strict $\pi$-adically complete $O$-algebra will be called a \emph{Lazardian $O$-algebra}. An object $A$ of $\mathsf{CRP}_O$ is precisely a residually perfect Lazardian $O$-algebra.
	
	Lazard observes that the equivalence of categories $\mathsf{CRP}_{\mathbf{Z}_p} \to \mathsf{PerfAlg}_{\mathbf{F}_p}$ in Example~\ref{Ex:BabyTitling} generalizes to an equivalence $U: \mathsf{CRP}_{\mathcal{L}}\to \mathsf{PerfAlg}_{\underline{\mathcal{L}}}$ with $U:= -\otimes_{\mathcal{L}}\underline{\mathcal{L}}$. Following Serre~\cite[Chapter II, \S6]{Ser79}, we construct an explicit quasiinverse which we call the Lazardian Witt vector functor. There is only one obstacle in generalizing Serre \emph{mutatis mutandis}: the argument realizing the $p^{-\infty}$-polynomials that govern addition and multiplication as the reduction of Witt polynomials~\cite[Chapter II, Theorem~6]{Ser79}, in particular showing that those $p^{-\infty}$-polynomials are in fact honest polynomials, no longer works. However, a direct calculation, given in Proposition~\ref{Prop:DirectProofOfPolynomial}, is not difficult.
	
	By functoriality, the equivalence $U: \mathsf{CRP}_\mathcal{L}\to \mathsf{PerfAlg}_{\underline{\mathcal{L}}}$
	provided by the residue functor immediately generalizes to $\mathsf{CRP}_O \to \mathsf{PerfAlg}_{\underline{O}}$ for objects $O$ of $\mathsf{CRP}_{\mathcal{L}}$ with perfect residue ring $\underline{O}:= O/\pi O$. We use the Lazardian Witt vector functor to generalize Lazard's observation to Lazardian $O$-algebras, in particular allowing for imperfect residue ring. 
	\subsection{Main results}
	\begin{mainthm}\label{Thm:LazardianURP}
		
		Let $O$ be an object of $\mathsf{CRP}_\mathcal{L}$, and let $A$ be a Lazardian $O$-algebra. Given a solid diagram
		\begin{equation*}
			\begin{tikzcd}
				\mathsf{Alg}_{O}\arrow[from = d, shift right = 1ex, "W"'] &\mathsf{CRP}_{O}\arrow[l,hook']\arrow[ d, shift left = 1ex, "U"]&\mathsf{CRP}_A \arrow[d, shift left = 1ex, "U", dashed] \arrow[l,dashed]\\
				\mathsf{Alg}_{\underline{O}}\arrow[from = u, "\Delta"', shift right= 1ex, "\dashv"]&\mathsf{PerfAlg}_{\underline{O}} \arrow[l,hook']\arrow[u, "W", shift left = 1ex, "\dashv"']&\mathsf{PerfAlg}_{\Delta(A)^{\operatorname{pf}}}\arrow[u, "W", shift left = 1ex, "\dashv"',dashed]\arrow[l]
			\end{tikzcd}
		\end{equation*}
		in which the middle column forms an adjoint equivalence, there exist factorizations for $W$ and $U$ such that the right column forms an adjoint equivalence.
	\end{mainthm}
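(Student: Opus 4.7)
The plan is to descend the middle-column adjoint equivalence along the forgetful functors, using the counit of the left-column adjunction $W \dashv \Delta$ as a bridge. First I will define the right-column $U$ by $U(B) := B/\pi B$ for $B \in \mathsf{CRP}_A$, equipped with the $\Delta(A)^{\operatorname{pf}}$-algebra structure arising from the perfection adjunction applied to the reduced structure map $\Delta(A) \to B/\pi B$; this manifestly agrees with the middle $U$ after forgetting to $\mathsf{PerfAlg}_{\underline{O}}$, supplying the desired factorization of the middle $U$.

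For the right-column $W$, given $R \in \mathsf{PerfAlg}_{\Delta(A)^{\operatorname{pf}}}$ I would take $W_A(R)$ to be the $\pi$-adic completion of the pushout
\[
A \otimes_{W(\Delta(A))} W(R),
\]
where $A$ is viewed as a $W(\Delta(A))$-algebra via the counit $\epsilon_A \colon W(\Delta(A)) \to A$, and $W(R)$ as a $W(\Delta(A))$-algebra via functoriality of $W$ along the composition $\Delta(A) \to \Delta(A)^{\operatorname{pf}} \to R$. The properties required for $W_A(R) \in \mathsf{CRP}_A$ then amount to $\pi$-completeness (built in), perfect residue $W_A(R)/\pi = \Delta(A) \otimes_{\Delta(A)} R = R$, and strictness---the last a substantive check about $\pi$-torsion-freeness of the completed pushout that should follow from the Lazardian structure of the Witt vectors involved. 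By construction the underlying $O$-algebra of $W_A(R)$ agrees with the middle-column $W(R)$, yielding the factorization of the middle $W$.

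To prove the right column is an adjoint equivalence, I would verify $W_A \dashv U$ by assembling the universal property of the pushout with the left-column adjunction and the perfection adjunction: a map $W_A(R) \to B$ in $\mathsf{CRP}_A$ corresponds, via these three universal properties, to a $\Delta(A)^{\operatorname{pf}}$-algebra map $R \to U(B)$. The unit $R \to U(W_A(R)) = R$ is the identity by the residue computation. For the counit $W_A(U(B)) \to B$, I would use the middle equivalence to identify $W(B/\pi B)$ with $B$ as an object of $\mathsf{CRP}_O$, then argue that the two induced $W(\Delta(A))$-algebra structures on $B$---one through $A$ via $\epsilon_A$, the other through $W(B/\pi B) = B$ via functoriality---agree because both reduce mod $\pi$ to the same map $\Delta(A) \to B/\pi B$, so that by the uniqueness clause of the left-column adjunction they coincide and the pushout $A \otimes_{W(\Delta(A))} B$ collapses to $B$. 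The hard part will be proving strictness of the completed pushout and establishing the counit isomorphism cleanly; both depend on the delicate compatibility between $\epsilon_A$ and the Lazardian Witt vector functor.
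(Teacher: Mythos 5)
There is a genuine gap, and it sits at the center of your construction of the right-column $W$. The left-column adjunction in the theorem is $\Delta\dashv W$ (in the paper's notation $(\Delta, W, \eta',\varepsilon')$ with unit $\eta'\colon 1\to W\Delta$ on $O$-algebras and counit $\varepsilon'\colon \Delta W\to 1$ on $\underline{O}$-algebras), not $W\dashv\Delta$ as you write. Consequently the map you call $\epsilon_A\colon W(\Delta(A))\to A$ does not exist: the only natural map the adjunction provides goes the other way, $\eta'_A\colon A\to W(\Delta(A))$. Your pushout $A\otimes_{W(\Delta(A))}W(R)$ therefore cannot even be formed. Even if one granted some map in that direction, the construction would not deliver what the theorem asks for: the completed pushout need not have $W(R)$ as its underlying $O$-algebra, so it would not be a \emph{factorization} of the middle-column $W$, and your residue computation $W_A(R)/\pi = R$ implicitly uses $W(\Delta(A))/\pi \cong \Delta(A)$, which fails because $\Delta(A)$ is not perfect in general (the paper's remark shows $\pi W(k)=\ker(W(k)\to k)$ forces $k$ perfect). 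The strictness of the completed pushout, which you flag as the hard part, is a symptom of the same problem: no such analysis should be needed.

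The correct route is much lighter: since the unit goes $A\to W\Delta(A)$, simply equip the \emph{same} object $W(R)$ with the $A$-algebra structure $A\xrightarrow{\eta'_A} W\Delta(A)\to W(\Delta(A)^{\operatorname{pf}})\to W(R)$; it then lies in $\mathsf{CRP}_A$ because an object of $\mathsf{CRP}_O$ with an $A$-algebra structure is automatically an object of $\mathsf{CRP}_A$ (strictness over $A$ reduces to strictness over $O$ using strictness of $A$), and the factorization of $W$ is literally the identity on underlying objects. Dually, your description of the right-column $U$ is essentially right but under-specified: the map ``$\Delta(A)\to B/\pi B$'' must be constructed, namely as the adjoint transpose $\varepsilon'_{U(B)}\circ\Delta(\varepsilon_B)^{-1}\circ\Delta(\text{structure map})$ of $A\to B\cong WU(B)$, extended to $\Delta(A)^{\operatorname{pf}}$ by initiality of $\Delta(A)^{\operatorname{pf}}$ in $\mathsf{PerfAlg}_{\Delta(A)}$. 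With these definitions no new unit or counit is needed at all: one keeps $\eta$ and $\varepsilon$ from the middle equivalence and only checks that $\eta_k$ is a $\Delta(A)^{\operatorname{pf}}$-algebra map and $\varepsilon_B$ an $A$-algebra map, which follows from diagram chases again invoking initiality of $\Delta(A)^{\operatorname{pf}}$. Your proposed verification of the counit via a collapsing pushout is replaced by this purely formal compatibility check.
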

	
	This theorem, proven throughout Section~\ref{Sec:ThreeFunctors1} via an explicit construction of the claimed factorizations, is purely formal on the assumption that all data underlying the solid diagram exist. There are two main situations in which we invoke Theorem~\ref{Thm:LazardianURP}.
	
	The first is in the case of our Lazardian Witt vector functor construction. Existence is provided by  the classical \emph{residue functor} 
	\begin{equation*}
		U: \mathsf{CRP}_{\mathcal{L}}\to \mathsf{PerfAlg}_{\underline{\mathcal{L}}},
	\end{equation*}
	defined by $U := -\otimes_{\mathcal{L}}\underline{\mathcal{L}}$; the \emph{Lazardian Witt vector functor}
	\begin{equation*}
		W: \mathsf{Alg}_{\underline{\mathcal{L}}}\to \mathsf{Alg}_{\mathcal{L}},
	\end{equation*}
	defined in Theorem~\ref{Theorem:LazardianWittVectorConstruction}; the required adjoint equivalence $W\dashv U$, established in Section~\ref{Sec:ExplicitAdjEq}; the \emph{Lazardian jet algebra functor} 
	\begin{equation*}
		\Delta: \mathsf{Alg}_{\mathcal{L}}\to\mathsf{Alg}_{\underline{\mathcal{L}}},
	\end{equation*} 
	defined in Definition~\ref{Defn:LazardianJetAlgebra} and Proposition~\ref{Prop:jetAlgebraFunctoriality}; and the required adjunction $\Delta \dashv W$, established in the remainder of Section~\ref{Sec:LazardianJetAlgebras}. In short, these data provide the following existence statement.

	\begin{mainthm}\label{Thm:LazardianWittVectors}
		Let $U: \mathsf{CRP}_{\mathcal{L}} \to \mathsf{PerfAlg}_{\underline{\mathcal{L}}}$ be the residue functor. The Lazardian Witt vector functor $W: \mathsf{Alg}_{\underline{\mathcal{L}}} \to \mathsf{Alg}_{\mathcal{L}}$ and the Lazardian jet algebra functor $\Delta: \mathsf{Alg}_{\mathcal{L}} \to \mathsf{Alg}_{\underline{\mathcal{L}}} $ fit into the diagram
		\begin{equation*}
			\begin{tikzcd}
				\mathsf{Alg}_{\mathcal{L}}\arrow[from = d, shift right = 1ex, "W"'] &\mathsf{CRP}_{\mathcal{L}}\arrow[l,hook']\arrow[ d, shift left = 1ex, "U"]\\
				\mathsf{Alg}_{\underline{\mathcal{L}}}\arrow[from = u, "\Delta"', shift right= 1ex, "\dashv"]&\mathsf{PerfAlg}_{\underline{\mathcal{L}}} \arrow[l,hook']\arrow[u, "W", shift left = 1ex, "\dashv"']
			\end{tikzcd}
		\end{equation*}
		in which the right column forms an adjoint equivalence.
	\end{mainthm}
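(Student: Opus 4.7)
The plan is to follow Serre's treatment \cite[Chapter II, \S6]{Ser79} of the classical equivalence $\mathsf{CRP}_{\mathbf{Z}_p} \simeq \mathsf{PerfAlg}_{\mathbf{F}_p}$, adapted \emph{mutatis mutandis} to the ramified setting governed by Lazard's universal $\pi$-ring $\mathcal{L}$. I would construct the three functors $W$, $U$, $\Delta$ individually and then assemble the diagram.

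The construction of $W\colon \mathsf{Alg}_{\underline{\mathcal{L}}} \to \mathsf{Alg}_{\mathcal{L}}$ proceeds by specifying ring operations on $W(R) := R^{\mathbf{N}}$ via ghost components adapted to the defining relation $p = \sum_{i\geq 1}\omega_i \pi^i$ of $\mathcal{L}$. The crux is to verify that the resulting operations are given by honest polynomials in $\underline{\mathcal{L}}[x_i,y_i]$ rather than merely $p^{-\infty}$-polynomials (as emerges from the na\"ive formulas). This is exactly where Serre's approach---passing through Witt polynomials over $\mathbf{Z}$---fails: the obvious lift carries denominators in $p$ that cannot be cleared in the presence of ramification. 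I would handle this by a direct calculation, the content of Proposition~\ref{Prop:DirectProofOfPolynomial}, exploiting the relation $p = \sum \omega_i \pi^i$ to cancel fractional exponents term by term. This is the main obstacle in the proof, and the only place where the ramified theory genuinely departs from the classical one.

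For the right-hand column I would show that $W$ restricts to a functor $\mathsf{PerfAlg}_{\underline{\mathcal{L}}} \to \mathsf{CRP}_{\mathcal{L}}$ by checking that $W(R)$ is $\pi$-adically complete, strict in the sense of Definition~\ref{Def:CRP}(iii), and that $W(R)/\pi W(R) \cong R$. Completeness and the residue identification follow from the ghost-component formulas exactly as in the unramified case; strictness uses perfectness of $R$, mirroring Lazard's original argument. The unit $R \to U(W(R))$ is then tautological, and the counit $W(U(A)) \to A$ is the unique lift provided by full faithfulness of $U$ on $\mathsf{CRP}_{\mathcal{L}}$, yielding the adjoint equivalence $W \dashv U$ as indicated.

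Finally, for the left-hand adjunction $\Delta \dashv W$ on all of $\mathsf{Alg}_{\mathcal{L}}$, I would define $\Delta(A)$ so as to represent the functor $R \mapsto \mathrm{Hom}_{\mathcal{L}}(A, W(R))$, explicitly as in Definition~\ref{Defn:LazardianJetAlgebra}; functoriality is Proposition~\ref{Prop:jetAlgebraFunctoriality}, and the adjunction then holds by construction. Compatibility of the two columns is automatic because $W$ restricted to perfect algebras factors through $\mathsf{CRP}_{\mathcal{L}}$ and $U$ is its quasiinverse there. Up to the single nontrivial calculation in Proposition~\ref{Prop:DirectProofOfPolynomial}, the remainder of the proof is the formal assembly carried out in Sections~\ref{Sec:ExplicitAdjEq} and~\ref{Sec:LazardianJetAlgebras}.
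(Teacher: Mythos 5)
Your proposal is correct and follows essentially the same route as the paper: the same decomposition into the coordinatewise construction of $W$ with Proposition~\ref{Prop:DirectProofOfPolynomial} as the single nontrivial input, Lazard's classical equivalence underlying the right-hand adjoint equivalence, and the generators-and-relations jet algebra of Definition~\ref{Defn:LazardianJetAlgebra} furnishing the left adjoint (the paper merely spells out the units, counits and triangle identities explicitly where you invoke full faithfulness and representability). One terminological caution: the coordinates used are Teichm\"uller (multiplicative-section) coordinates, i.e.\ the expansion $\sum_i[\alpha_i^{q^{-i}}]\pi^i$, not ghost components --- the paper's example $\overline{S_1}\neq Q_1^{+}$ shows that the Witt/ghost-polynomial operations genuinely differ here --- but since you pass to the $p^{-\infty}$-polynomials and the direct calculation of Proposition~\ref{Prop:DirectProofOfPolynomial}, your intended construction coincides with the paper's.
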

	
	The second case in which we invoke  Theorem~\ref{Thm:LazardianURP} is when $O$ is the integer ring of a finite extension of $\mathbf{Q}_p$,  or alternatively when $O= \mathbf{F}_p\llbracket\pi\rrbracket$. When $O$ is the integer ring of a finite extension of $\mathbf{Q}_p$ and $\pi \in O$ is a uniformizer, we take $\Delta$ to be the free $\delta_{\pi}$-ring functor, and we take $W$ to be the $\pi$-typical ramified Witt vector functor. We may also include the classical case of $O:= \mathbf{Z}$ if we distinguish $O$ from its completion $\widehat{O}$ with respect to a uniformizer. In other words, we work with the diagram
	\begin{equation*}
		\begin{tikzcd}
			\mathsf{Alg}_O\arrow[from = d, shift right = 1ex, "W"']\arrow[r, shift right = 1ex, "\perp"] & \mathsf{Alg}_{\widehat{O}}\arrow[l, shift right = 1ex]& \mathsf{CRP}_{\widehat{O}}\arrow[l,hook']\arrow[ d, shift left = 1ex, "U"] \\
			\mathsf{Alg}_O \arrow[from = u, "\Delta"', shift right= 1ex, "\dashv"]\arrow[r, shift right = 1ex, "\top"]& \mathsf{Alg}_{\underline{O}}\arrow[l, shift right = 1ex]\arrow[u, "W",dashed] & \mathsf{PerfAlg}_{\underline{O}}.\arrow[l,hook']\arrow[u, "W", shift left = 1ex, "\dashv"']
		\end{tikzcd}
	\end{equation*}

	 When $O= \mathbf{F}_p\llbracket\pi\rrbracket$, we take $\Delta:= \operatorname{HS}(-)\otimes_{\operatorname{HS}(\mathbf{F}_p\llbracket\pi\rrbracket)}\mathbf{F}_p$, where $\operatorname{HS}$ is the functor of Hasse-Schmidt derivations in the sense of Vojta~\cite{Voj07}, alternatively known as jet algebras~\cite{LM09,MP24}, and we take $W := (-)\llbracket\pi\rrbracket$ to be the power series functor. 
	
	Given a Lazardian $O$-algebra $A$ as in the hypotheses of Theorem~\ref{Thm:LazardianURP}, we obtain a construction
	\begin{equation*}
		k^u = \Delta(A)^{\operatorname{pf}}
	\end{equation*}
	of the residue ring that does not require a choice of $p$-basis. However, the data of an $O$-algebra structure implies a choice of uniformizer, and moreover the functor $W$ from which we obtain the universal residual perfection
	\begin{equation*}
		A^u = W(\Delta(A)^{\operatorname{pf}})
	\end{equation*} 
	depends on our choice of uniformizer. Since we may take $O:= \mathbf{F}_p\llbracket\pi\rrbracket$, this construction covers the equal characteristic case in its entirety. However the mixed characteristic case does not appear to admit any simple general description when $A$ does not admit any $\mathcal{L}$-algebra structure.

	For the particular case of $O := \mathbf{F}_p\llbracket\pi\rrbracket$, we have an explicit calculation.
	\begin{corollary*}
		Let $k$ be an $\mathbf{F}_p$-algebra. The universal residual perfection of $ k\llbracket\pi\rrbracket$ is 
		\begin{equation*}
			(k\llbracket\pi\rrbracket)^u = \operatorname{HS}(k)^{\operatorname{pf}}\llbracket\pi\rrbracket.
		\end{equation*} 
	\end{corollary*}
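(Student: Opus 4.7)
The plan is to apply Theorem~\ref{Thm:LazardianURP} in the case $O := \mathbf{F}_p\llbracket\pi\rrbracket$, with the functors $\Delta = \operatorname{HS}(-)\otimes_{\operatorname{HS}(\mathbf{F}_p\llbracket\pi\rrbracket)}\mathbf{F}_p$ and $W = (-)\llbracket\pi\rrbracket$ identified in the introduction for this choice of $O$, applied to the Lazardian $O$-algebra $A := k\llbracket\pi\rrbracket$. Since $A^u = W(\Delta(A)^{\operatorname{pf}})$ by Theorem~\ref{Thm:LazardianURP} (as spelled out in the introduction), the entire corollary reduces to producing a natural isomorphism of $\mathbf{F}_p$-algebras $\Delta(k\llbracket\pi\rrbracket)\cong\operatorname{HS}(k)$.

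I would prove this identification via the adjunction $\Delta\dashv W$ rather than through the explicit tensor-product definition of $\Delta$. Fix an $\mathbf{F}_p$-algebra $B$; by Yoneda, it suffices to produce a natural bijection
\[ \operatorname{Hom}_{\mathsf{Alg}_{\mathbf{F}_p\llbracket\pi\rrbracket}}\bigl(k\llbracket\pi\rrbracket,\ B\llbracket\pi\rrbracket\bigr) \;\cong\; \operatorname{Hom}_{\mathsf{Alg}_{\mathbf{F}_p}}\bigl(\operatorname{HS}(k),\ B\bigr). \]
For the left-hand side, every $\mathbf{F}_p\llbracket\pi\rrbracket$-algebra map $\psi$ sends $\pi\mapsto\pi$ and hence preserves the $\pi$-adic filtration; reducing modulo $\pi^N$ on both sides turns $\psi$ into an $\mathbf{F}_p[\pi]/\pi^N$-algebra map $k[\pi]/\pi^N \to B[\pi]/\pi^N$, which is completely determined by its restriction to $k$. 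Passing to the limit over $N$, $\psi$ is determined by $\psi|_k\colon k\to B\llbracket\pi\rrbracket$; conversely, any $\mathbf{F}_p$-algebra map $\phi\colon k\to B\llbracket\pi\rrbracket$ extends to $k[\pi]\to B\llbracket\pi\rrbracket$ by $\pi\mapsto\pi$ and then, by $\pi$-adic completeness of the target, uniquely to $k\llbracket\pi\rrbracket$.

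This gives a natural bijection between the left-hand Hom set and $\operatorname{Hom}_{\mathbf{F}_p}(k,\ B\llbracket\pi\rrbracket)$, which by Vojta's universal property for the Hasse--Schmidt jet algebra agrees with $\operatorname{Hom}_{\mathbf{F}_p}(\operatorname{HS}(k),\ B)$. Yoneda then delivers $\Delta(k\llbracket\pi\rrbracket)\cong\operatorname{HS}(k)$, and taking perfections and applying $W=(-)\llbracket\pi\rrbracket$ yields the stated formula $(k\llbracket\pi\rrbracket)^u\cong\operatorname{HS}(k)^{\operatorname{pf}}\llbracket\pi\rrbracket$. The only substantive step is the extension/restriction argument for $\mathbf{F}_p\llbracket\pi\rrbracket$-algebra maps between power series rings, which is a routine reduction-modulo-$\pi^N$ calculation; everything else is bookkeeping around established universal properties.
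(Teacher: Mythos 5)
Your argument is correct, but it proves the identification $\Delta(k\llbracket\pi\rrbracket)\cong\operatorname{HS}(k)$ by a genuinely different route from the paper. The paper (Section~5, leading to Corollary~\ref{Cor:ExplicitPowerSeriesCalc}) works directly with the tensor-product presentation of $\Delta$: it identifies $\operatorname{HS}^m(A)\otimes_{\operatorname{HS}^m(O_m)}\mathbf{F}_p$ with the quotient $\operatorname{HS}^m(A)/(d^{[n]}\pi^i=\delta_{ni})$, proves surjectivity of $\operatorname{HS}^m(k)\to\operatorname{HS}^m(A)/(d^{[n]}\pi^i=\delta_{ni})$ via the formula $d^{[n]}\sum_i a_i\pi^i=\sum_i d^{[n-i]}a_i$, and then builds an explicit retraction from the ring homomorphism $\phi(a)=\sum_n\sum_i d^{[n-i]}a_i\,\pi^n$, whose multiplicativity requires the combinatorial reindexing identity of Lemma~\ref{Lemma:HSUniversalIdentity}. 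You instead corepresent: granting the same data the paper grants (the solid diagram for $O=\mathbf{F}_p\llbracket\pi\rrbracket$, in particular $\Delta\dashv(-)\llbracket\pi\rrbracket$), you compute $\operatorname{Hom}(\Delta(k\llbracket\pi\rrbracket),B)\cong\operatorname{Hom}_{\mathbf{F}_p\llbracket\pi\rrbracket}(k\llbracket\pi\rrbracket,B\llbracket\pi\rrbracket)\cong\operatorname{Hom}_{\mathbf{F}_p}(k,B\llbracket\pi\rrbracket)\cong\operatorname{Hom}(\operatorname{HS}(k),B)$ and apply Yoneda; each step checks out (restriction is injective by $\pi$-adic separatedness, extension exists by completeness of $B\llbracket\pi\rrbracket$, and the last bijection is Vojta's universal property, which has no convergence issue since $B\llbracket\pi\rrbracket$ is a product). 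In effect your extension-plus-Cauchy-product step is the paper's $\phi$ in disguise, with the Leibniz bookkeeping of Lemma~\ref{Lemma:HSUniversalIdentity} absorbed into the universal property of $\operatorname{HS}$; your route buys brevity and avoids the triple-sum manipulation (and only uses that $\Delta$ is \emph{some} left adjoint of the power series functor), while the paper's explicit computation buys the concrete structure map $a\mapsto\sum_n\bigl(\sum_i d^{[n-i]}a_i\bigr)\pi^n$ and the induced map on residue rings recorded in Corollary~\ref{Cor:ExplicitPowerSeriesCalc}, uniformly in the truncation level $m$, which your Yoneda isomorphism leaves implicit.
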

	\begin{proof}
		See Corollary~\ref{Cor:ExplicitPowerSeriesCalc}.
	\end{proof}
	This corollary gives a canonical description of the universal residual perfection for a power series ring over an arbitrary $\mathbf{F}_p$-algebra, mainly because the choice of uniformizer is already implicit. In the case of equal characteristic complete discrete valuation rings, it only shifts the choice of $p$-basis and uniformizer from the construction of $k_T$ and an infinitesimal thickening of $k_T^{\operatorname{pf}}$ to a choice of isomorphism $k\llbracket\pi\rrbracket\xrightarrow{\sim}A$ in service of the Cohen structure theorem. 
	
	\subsection{Ramified Witt vectors in the literature}
	The earliest commonly-cited works are that of Ditters~\cite{Dit75} and Drinfeld~\cite{Dri76}. Hazewinkel~\cite{Haz80} nonexhaustively credits these two authors along with unpublished work of J.~Casey for the finite residue field untwisted case. Hazewinkel then introduces twisted ramified Witt vectors for complete discrete valuation rings admitting a Frobenius lift, without any restriction on the residue field. Some related subsequent works~\cite{Che18, Mat19} adopt the same setup as Hazewinkel, while most others~\cite{Bor11,ACZ16,  PT16,Sch17,Ver17,FF18,BC20,BMS23, BPS23,Che23} appear to invariably deal with the original case of an integer ring of a finite extension of $\mathbf{Q}_p$. So the only question is to what extent does the condition of being Lazardian overlap with Hazewinkel's setup of a complete discrete valuation ring admitting a Frobenius lift. The author of this paper is unable to provide any meaningful answer to this question.
	
	\subsection{Conventions}
	Throughout this paper, for all $I$-adically complete rings $A$ such that $k:= A/I$ is a perfect $\mathbf{F}_p$-algebra, we let $[-]: k \to A$ denote the unique multiplicative section~\cite[Chapter~II, Proposition~8]{Ser79}. Occasionally we will encounter a situation involving an $I$-adically complete ring $A$ along with an $A$-algebra $R$ that is not necessarily complete. The notation $\widehat{R}$, or alternatively $R^{\widehat{~}}$, will be used to denote $IR$-adic completion.
	
	\subsection{Acknowledgments}
	The author thanks James Borger for the comments on drafts and for the pertinacious insistence that the moduli of residual perfection should have a description in terms of jets. This research was supported by an Australian Government Research Training Program.
	\section{Lazardian Witt vectors}

	We define a slight generalization $\mathcal{L}_{m,q}^{(t)}$ of Lazard's universal $\pi$-ring, along with its residue ring $\underline{\mathcal{L}_m}$, in order to obtain the \emph{arithmetic polynomials}
	\begin{equation*}
		Q^{+(t)}_{n,q}, Q^{\times(t)}_{n,q} \in \mathbf{F}_p[\omega_1^{q^t},\dots,\omega_n^{q^t};X_0,\dots,X_n;Y_0,\dots,Y_n]
	\end{equation*} 
	that furnish affine $(m+1)$-space $\mathbf{A}^{m + 1}_{\underline{\mathcal{L}_m}}$ with the structure of an $\mathcal{L}_{m,q}^{(t)}$-algebra scheme. The fact that the $Q_{n,q}^{+ (t)}, Q^{\times (t)}_{n,q}$ are polynomials is proven in Proposition~\ref{Prop:DirectProofOfPolynomial}, while the construction of the Lazardian Witt vector functor is given in Theorem~\ref{Theorem:LazardianWittVectorConstruction}. We end the section by discussing additional structures such as the Verschiebung, the multiplicative section, and the Frobenius operator, in the expository style of Serre~\cite[p.~42-44]{Ser79}.
	
	\begin{definition}
		Fix a prime $p > 0$. Let $t \in \mathbf{Z}$, let $m \in \mathbf{N}\cup \{\infty\}$, and  let $q>1$ be some $p$th power. Define \emph{Lazard's universal $\pi$-ring} by 
		\begin{equation*}
			\mathcal{L}^{(t)}_{m,q} := \mathbf{Z}[\omega_{i}^{q^{-\infty}}\mid 1 \leq i \leq m]\llbracket\pi\rrbracket\Bigg/\left(p - \sum_{i = 1}^m \omega_{i }^{q^{t}}\pi^i, \pi^{m + 1}\right),
		\end{equation*}
		and let 
		\begin{equation*}
			\underline{\mathcal{L}_{m}}:= \mathbf{F}_p[\omega_i \mid 1\leq i \leq m]^{\operatorname{pf}}
		\end{equation*}
		be its residue ring. When $t,m,q$ are not mentioned, we will take these values to be $t = 0$, $m = \infty$, and $q = p$, as in Lazard's original definition.
	\end{definition}

	In the following, we will need to make a distinction between the Frobenius endomorphism of $\underline{\mathcal{L}}[X_i^{p^{-\infty}};Y_i^{p^{-\infty}}\mid i \in \mathbf{N}]$ and the Frobenius endomorphism applied only on the coefficient ring, which is base change by the Frobenius endomorphism. We will denote the Frobenius endomorphism applied on a polynomial by $f(X,Y)^q$, and we will denote the Frobenius endomorphism applied only on the coefficients by $f^q$. Under this convention, we have $f(X,Y)^q = f^q(X^q, Y^q)$. 
	\begin{proposition}\label{Prop:DirectProofOfPolynomial}
		Let $\ast$ be either operation $+,\times$ and let $Q_{n,q}^{\ast (t)} \in \underline{\mathcal{L}}[X_i^{p^{-\infty}};Y_i^{p^{-\infty}}\mid i \in \mathbf{N}]$ be the $p^{-\infty}$-polynomials determined by the equations
		\begin{equation*}
			\sum_i X_i^{q^{-i}}\pi^i \ast \sum_i Y_i^{q^{-i}}\pi^i  = \sum_i [Q_{i,q}^{\ast(t) }(X,Y)^{q^{-i}}] \pi^i
		\end{equation*} 
		in the ring $\mathcal{L}_{ q}^{(t)}[X_i^{p^{-\infty}};Y_i^{p^{-\infty}}\mid i \in \mathbf{N}]^{\widehat{~}}$. 
		\begin{enumerate}[\normalfont (i)]
			\item Each $Q_{n,q}^{\ast(t)}$ is contained in the subring $\mathbf{F}_p[\omega^{q^t}_1,\dots,\omega^{q^t}_n;X_0,\dots,X_n;Y_0,\dots,Y_n]$. 
			\item The base change 
			\begin{equation*}
				\begin{tikzcd}
						\underline{\mathcal{L}}[X_i^{p^{-\infty}}; Y_i^{p^{-\infty}}\mid i \in \mathbf{N}] \arrow[r]& \	\underline{\mathcal{L}}[X_i^{p^{-\infty}}; Y_i^{p^{-\infty}}\mid i \in \mathbf{N}]\\
						\underline{\mathcal{L}} \arrow[u]\arrow[r,"\text{\normalfont Frobenius}"]&	\underline{\mathcal{L}}\arrow[u]
				\end{tikzcd}
			\end{equation*}
			sends $Q_{n,q}^{\ast(t)}$ to $Q_{n,q}^{\ast (t + 1)}$; in other words $Q^{\ast(t)q}_{n,q} = Q^{\ast (t + 1)}_{n,q}$. 
		\end{enumerate}
	\end{proposition}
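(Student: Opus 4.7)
The plan is to prove (i) and (ii) simultaneously by induction on $n$ via an explicit Teichm\"uller-normal-form computation in the $\pi$-adically complete ring $R := \mathcal{L}_q^{(t)}[X_i^{p^{-\infty}}; Y_i^{p^{-\infty}} \mid i \in \mathbf{N}]^{\widehat{~}}$. Since $R$ is a strict Lazardian $\mathcal{L}_q^{(t)}$-algebra with perfect residue ring $\bar R := \underline{\mathcal{L}}[X_i^{p^{-\infty}}; Y_i^{p^{-\infty}}]$, it admits a multiplicative Teichm\"uller section $[-] : \bar R \to R$; and since $\omega_j$, $X_i$, $Y_i$ all admit compatible $q$-power roots in $R$, the limit definition of $[-]$ gives $[\omega_j^{q^t}] = \omega_j^{q^t}$ and $[X_i^{q^{-i}}] = X_i^{q^{-i}}$, so the left-hand side of the defining equation already sits in partial Teichm\"uller form. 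Reducing the equation modulo $\pi$ yields the base case $Q_0^{+(t)} = X_0 + Y_0$ and $Q_0^{\times(t)} = X_0 Y_0$, both in the asserted subring and trivially satisfying (ii) since neither depends on any $\omega_j$.

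For the inductive step, assume (i) and (ii) for all indices $<n$ and set
\[
E_n := \sum_i X_i^{q^{-i}}\pi^i \ast \sum_i Y_i^{q^{-i}}\pi^i - \sum_{i=0}^{n-1}[Q_i^{\ast(t)}(X,Y)^{q^{-i}}]\pi^i \in R.
\]
The defining equation puts $E_n \in \pi^n R$ with $\overline{E_n/\pi^n} = Q_n^{\ast(t)}(X,Y)^{q^{-n}}$ in $\bar R$. The crux is to show this reduction lies in the subring $S_n := \mathbf{F}_p[\omega_j^{q^{t-n}}, X_i^{q^{-n}}, Y_i^{q^{-n}} \mid 1 \leq j \leq n,\ 0 \leq i \leq n]$; once this is in hand, raising to the $q^n$-th power carries $S_n$ bijectively onto $\mathbf{F}_p[\omega_j^{q^t}, X_i, Y_i]$ preserving the polynomial form (Frobenius fixes $\mathbf{F}_p$-coefficients), producing $Q_n^{\ast(t)}$ of the advertised type. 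To verify the crux, I would compute $E_n \pmod{\pi^{n+1}}$ by expanding each Teichm\"uller term $[Q_i^{\ast(t)}(X,Y)^{q^{-i}}]$ modulo $\pi^{n+1-i}$ using three ingredients: the multiplicativity $[\prod m_\alpha] = \prod [m_\alpha]$ on monomials in $\omega_j^{q^t}, X_l, Y_l$; the identity $[a] + [b] = [a+b] + \sum_{\ell \geq 1}[Q_\ell^{+(t)}(a, 0, \ldots; b, 0, \ldots)^{q^{-\ell}}]\pi^\ell$ obtained from the defining equation for addition by specialization, fully computable modulo $\pi^n$ by induction; and the relation $p = \sum_j \omega_j^{q^t}\pi^j$. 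Each invocation of these last two tools introduces either a $\pi$-factor paired with $\omega_j^{q^t}$-coefficients or an addition correction governed by an already-known $Q_\ell^{+(t)}$, so after dividing by $\pi^n$ and reducing modulo $\pi$ all surviving contributions remain inside $S_n$.

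Property (ii) then follows from functoriality: the ring map $\mathcal{L}_q^{(t)} \to \mathcal{L}_q^{(t+1)}$ sending $\omega_j \mapsto \omega_j^q$ and $\pi \mapsto \pi$ intertwines the defining relations $p - \sum \omega_j^{q^t}\pi^j$ and $p - \sum \omega_j^{q^{t+1}}\pi^j$, extends to the ambient rings compatibly with Teichm\"uller lifts, and sends the defining equation for $Q_n^{\ast(t)}$ to that for $Q_n^{\ast(t+1)}$; on coefficients it acts as the $q$-power Frobenius. The hard part is the bookkeeping in the crux of the inductive step, where one must carefully track exponents through iterated applications of the addition-correction formula so that neither non-integer exponents on $X_l, Y_l$ nor non-$q^t$-multiple exponents on $\omega_j$ ever appear. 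The pattern is already visible in the case $n = 1$, which by a direct computation recovers $Q_1^{+(t)}(X,Y) = X_1 + Y_1 - \omega_1^{q^{t+1}}\sum_{k=1}^{p-1}\binom{p}{k}/p \cdot X_0^{q-qk/p} Y_0^{qk/p}$, a Lazardian analogue of Serre's classical Witt sum polynomial (Serre, Chapter~II, \S6).
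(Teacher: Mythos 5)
Your base case, your reduction to showing $\overline{E_n/\pi^n}\in S_n$, and your argument for (ii) (which is essentially the paper's: apply the Frobenius-semilinear map $\mathcal{L}_q^{(t)}\to\mathcal{L}_q^{(t+1)}$ to the defining equation and use uniqueness of Teichm\"uller coordinates) are fine. The gap is in the crux of the inductive step for $\ast=+$. Your only tool for expanding Teichm\"uller lifts of sums is the specialized identity $[a]+[b]=[a+b]+\sum_{\ell\geq 1}[Q^{+(t)}_{\ell,q}(a,0,\dots;b,0,\dots)^{q^{-\ell}}]\pi^{\ell}$, which, as you yourself note, the inductive hypothesis only makes computable modulo $\pi^{n}$. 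But the $i=0$ term of $E_n$ is $[Q^{+(t)}_{0,q}(X,Y)]=[X_0+Y_0]$, and you need it modulo $\pi^{n+1}$: its expansion carries the top-order term $[Q^{+(t)}_{n,q}(X_0,0,\dots;Y_0,0,\dots)^{q^{-n}}]\pi^{n}$, i.e.\ exactly a specialization of the polynomial you are trying to control at step $n$. Running your computation therefore only shows that $Q^{+(t)}_{n,q}(X,Y)-Q^{+(t)}_{n,q}(X_0,0,\dots;Y_0,0,\dots)$ lies in the desired subring; the missing piece is that the Teichm\"uller coordinates of $[X_0]+[Y_0]$ are themselves polynomials in $\omega_j^{q^t},X_0,Y_0$, which is the classical heart of the matter, and trying to treat this special case by the same expansion reproduces the same circularity (already at $n=1$). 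The multiplicative case and the terms with $i\geq 1$ are unaffected, since there the needed corrections only involve $Q^{+(t)}_{\ell,q}$ with $\ell\leq n-1$.

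The paper closes exactly this hole by a different device: modulo $\pi^{n+1}$, the lift $[z]$ equals the $q^n$-th power of an arbitrary lift of $z^{q^{-n}}$. Choosing integral polynomial lifts $P^{\ast(t)}_{i,q}$ of the inductively known $Q^{\ast(t)}_{i,q}$ for $i<n$, it expands $P^{\ast(t)}_{i,q}(\omega^{q^{-(i+n)}},X^{q^{-(i+n)}},Y^{q^{-(i+n)}})^{q^n}$ by the multinomial theorem and kills, using $v_q\binom{q^n}{k_1,\dots,k_\alpha}>n-i$ whenever some $v_q(k_j)<i$ together with $p\in(\pi)$, all monomials whose exponents fail to be multiples of $q^{-n}$ once multiplied by $\pi^i$; everything then lies in the finitely generated subring $\mathcal{P}_n$, whose residue ring has $q^n$-th power inside the target polynomial ring. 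For $i=0$ no estimate is needed: $(X_0^{q^{-n}}+Y_0^{q^{-n}})^{q^n}$ visibly lies in $\mathcal{P}_n$, the integer coefficients being absorbed via $p=\sum_j\omega_j^{q^t}\pi^j$. To repair your argument you must either import this ``$q^n$-th power of a lift'' computation for $[X_0+Y_0]$ (at which point you have essentially reproduced the paper's proof) or supply an independent argument for the coordinates of $[X_0]+[Y_0]$. Your closing formula for $Q^{+(t)}_{1,q}$ is not itself an error: it agrees with the paper's example because $C_p(X^{q/p},Y^{q/p})\equiv C_q(X,Y)\bmod p$.
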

	\begin{proof}
		(i) Note that $Q_{0,q}^{\ast(t)} = X_0 \ast Y_0 \in \mathbf{F}_p[X_0;Y_0]$ is a polynomial. Assume that there exists $n \geq 1$ such that 
		\begin{equation*}
			Q_{i,q}^{\ast(t)} \in \mathbf{F}_p[\omega^{q^t}_1,\dots,\omega^{q^t}_i;X_0,\dots,X_i;Y_0,\dots,Y_i]
		\end{equation*} 
		for all $i < n$. Define $\mathcal{P}_n\subset\mathcal{L}_{ q}^{(t)}[X_i^{p^{-\infty}};Y_i^{p^{-\infty}}\mid i \in \mathbf{N}]/(\pi^{n + 1})$ as the image
		\begin{equation*}
			\begin{tikzcd}
				\displaystyle\frac{\mathbf{Z}[\omega_i^{q^t};X_i^{q^{-n}};Y_i^{q^{-n}}\mid i \leq n]\llbracket\pi\rrbracket}{\left(p-\sum_{i \leq n}\omega_i^{q^t}\pi^i,\pi^{n +1}\right)}\arrow[r,"\sim"]\arrow[rd,tail] & \mathcal{P}_n\arrow[d,hook] \\
				&\mathcal{L}_{ q}^{(t)}[X_i^{p^{-\infty}};Y_i^{p^{-\infty}}\mid i \in \mathbf{N}]/(\pi^{n + 1}).
			\end{tikzcd}
		\end{equation*} 
		In particular the image $\underline{\mathcal{P}_n} \subset \underline{\mathcal{L}}[X_i^{p^{-\infty}};Y_i^{p^{-\infty}}\mid i \in \mathbf{N}]$ satisfies 
		\begin{equation*}
			(\underline{\mathcal{P}_n})^{q^n} \subset \mathbf{F}_p[\omega^{q^t}_1,\dots,\omega^{q^t}_n;X_0,\dots,X_n;Y_0,\dots,Y_n]. 
		\end{equation*}

		To calculate $Q_{n,q}^{\ast (t)}$, we consider the congruence
		\begin{equation*}
			\sum_{i = 0}^{n } X_i^{q^{-i}}\pi^i \ast \sum_{i = 0}^{n } Y_i^{q^{-i}}\pi^i  \equiv \sum_{i = 0}^{n} [Q_{i,q}^{\ast(t) }(X,Y)^{q^{-i}}] \pi^i \bmod \pi^{n + 1}.
		\end{equation*} 
		Observe that
		\begin{equation*}
				\sum_{i = 0}^{n } X_i^{q^{-i}}\pi^i \ast \sum_{i = 0}^{n } Y_i^{q^{-i}}\pi^i  \in \mathcal{P}_n.
		\end{equation*} 
		To calculate the multiplicative lift $[Q_{i,q}^{\ast (t)}(X,Y)^{q^{-i}}]$ for $i < n$, we may take the $q^n$th power of any lift of $Q_{i,q}^{\ast (t)}(X,Y)^{q^{-(i + n)}}$. Choosing lifts
		\begin{equation*}
			P_{i,q}^{\ast (t)} \in \mathbf{Z}[\omega^{q^t}_1,\dots,\omega^{q^{t}}_i;X_0,\dots,X_i;Y_0,\dots,Y_i] 
		\end{equation*}
		of $Q_{i,q}^{\ast(t)}$, we obtain
		\begin{equation*}
		P_{i,q}^{\ast (t)}(\omega^{q^{-(i + n)}},X^{q^{-(i + n)}},Y^{q^{-(i + n)}})^{q^n}\equiv [Q_{i,q}^{\ast (t)}(X,Y)^{q^{-i}}]\bmod \pi^{n + 1}
		\end{equation*}
		for all $i < n$.

		Write $P_{i,q}^{\ast (t)} = x_1 + \dots + x_\alpha$ where each $x_1,\dots,x_\alpha$ is a distinct monomial in $\omega, X, Y$. We have 
		\begin{equation*}
			P_{i,q}^{\ast (t)}(\omega,X,Y)^{q^n} = \sum_{k_1 + \dots + k_\alpha = q^n} \binom{q^n}{k_1,\dots,k_\alpha} x_1^{k_1} \dots x_\alpha^{k_\alpha}
		\end{equation*} 
		by the multinomial theorem. Suppose that $e \geq 1$ is such that $q = p^e$. Define the $q$-adic valuation by $v_q(x) := v_p(x)/e$, where $v_p$ is the ordinary $p$-adic valuation. If there exists some $k_j$ with $v_q(k_j) < i$, then 
		\begin{equation*}
			v_q\binom{q^n}{k_1,\dots,k_\alpha} \geq v_q\binom{q^n}{k_j} = n-v_q(k_j) > n-i.
		\end{equation*}
		Hence $\binom{q^n}{k_1,\dots,k_\alpha}\pi^i \equiv 0 \bmod \pi^{n + 1}$, and, after multiplying by $\pi^i$, it suffices to sum over the tuples $(k_1,\dots,k_\alpha)$ such that $v_q(k_j) \geq i$ for all $j$. This implies
		\begin{equation*}
		[Q_{i,q}^{\ast (t)}(X,Y)^{q^{-i}}]\pi^i = 	P_{i,q}^{(t)}(\omega^{q^{-(i + n)}}, X^{q^{-(i + n)}}, Y^{q^{-(i + n)}})^{q^n}\pi^i \in \mathcal{P}_n
		\end{equation*}
		for all $i < n$. Therefore 
		\begin{equation*}
			[Q_{n,q}^{\ast(t)}(X,Y)^{q^{-n}}]\pi^{n } = 	\sum_{i = 0}^{n } X_i^{q^{-i}}\pi^i \ast \sum_{i = 0}^{n } Y_i^{q^{-i}}\pi^i  - \sum_{i = 0}^{n-1} [Q_{i,q}^{\ast(t) }(X,Y)^{q^{-i}}] \pi^i  \in \mathcal{P}_n.
		\end{equation*}
		We deduce that $Q_{n,q}^{\ast(t)}$ is in $\mathbf{F}_p[\omega_1^{q^t},\dots,\omega_n^{q^t};X_0,\dots,X_n;Y_0,\dots,Y_n]$ after canceling by $\pi^n$ and taking $q^n$th powers. 
		
		(ii) In $\mathcal{L}^{(t)}_q[X_i^{p^{-\infty}}; Y_i^{p^{-\infty}}]^{\widehat{~}}$, we have
		\begin{equation*}
			\sum_{i} X_i^{q^{-i}} \pi^i \ast \sum_{i} Y_i^{q^{-i}} \pi^i = \sum_{i}[Q_{i,q}^{\ast(t)}(X,Y)^{q^{-i}}]\pi^i.
		\end{equation*}
		Since the lifted Frobenius operator $\mathcal{L}^{(t)}_q[X_i^{p^{-\infty}}; Y_i^{p^{-\infty}}]^{\widehat{~}} \to \mathcal{L}^{(t+1)}_q[X_i^{p^{-\infty}}; Y_i^{p^{-\infty}}]^{\widehat{~}}$ is the Frobenius endomorphism on coordinates, we obtain
		\begin{equation*}
			\sum_{i}[Q_i^{\ast(t + 1)}(X^q,Y^q)^{q^{-i}}]\pi^i = \sum_{i}[Q_i^{\ast(t) q}(X^q,Y^q)^{q^{-i}}]\pi^i.
		\end{equation*}
		Uniqueness of coordinates gives
		\begin{equation*}
			Q_i^{\ast(t + 1)1/q}(X,Y)^q = Q_i^{\ast(t+1)}(X^q,Y^q) = Q_i^{\ast(t) q}(X^q,Y^q) = Q_i^{\ast (t)}(X,Y)^q.
		\end{equation*}
		Thus $Q_i^{\ast(t + 1)} = Q_i^{\ast(t) q}$. 
	\end{proof}
	\begin{example}
		We have 
		\begin{equation*}
			Q_0^{+} = X_0 + Y_0,\quad  Q_1^+ = X_1 + Y_1 + (p/\pi)^q C_q(X_0,Y_0),
		\end{equation*} 
		where $C_q \in \mathbf{Z}[X,Y]$ is defined implicitly by $p C_q = X^q + Y^q - (X +Y)^q$ in $\mathbf{Z}[X,Y]$. On the other hand, the first Witt polynomial for addition is
		\begin{equation*}
			S_1 = X_1 + Y_1 + (p/\pi) C_q(X_0,Y_0).
		\end{equation*}
		Therefore $\overline{S_1} \neq Q_1^+$, unlike the classical case of ramified Witt vectors. However, we observe that $S_1$ reduces to $Q_1^+$ if we specialize to a residue ring in which the first coefficient $\omega_1 = \overline{p/\pi}$ for $p$ is an element of $\mathbf{F}_q$, as would be the case for ramified Witt vectors with uniformizer $\pi$ in a finite extension of $\mathbf{Q}_p$ with residue field $\mathbf{F}_q$.

		We have 
		\begin{equation*}
			Q_0^\times = X_0Y_0,\quad  Q_1^\times = X_0^qY_1 + X_1 Y_0^q.
		\end{equation*} 
		The first Witt polynomial for multiplication is
		\begin{equation*}
			P_1 = X_0^qY_1 + X_1Y_0^q + \pi X_1Y_1.
		\end{equation*}
		 Even though $\overline{P_1} = Q_1^\times$, we observe that the `naive' lift of $Q_1^\times$ is not equal to $P_1$.
	\end{example}
	\begin{theorem}\label{Theorem:LazardianWittVectorConstruction}
		Let $m \in \mathbf{N}\cup \{\infty\}$, and let  $k$ be an $\underline{\mathcal{L}_m}$-algebra. For $a,b \in k^{m + 1}$, define
		\begin{equation*}
			a + b := (Q^{+(t)}_{i,q}(a,b))_i, \quad ab := (Q^{\times (t)}_{i,q}(a,b))_i.
		\end{equation*}
		These binary operations make the set $k^{m + 1}$ into an $\mathcal{L}_{m,q}^{(t)}$-algebra $W^{(t)}_{m,q}(k)$ with structure map $\omega_j \mapsto (\omega_j,0,0,\dots)$ and $\pi \mapsto (0,1,0,\dots)$.  The $\mathcal{L}_{m,q}^{(t)}$-algebras  $W^{(t)}_{m,q}(k)$ define a functor 
		\begin{equation*}
			\begin{tikzcd}
				\mathsf{Alg}_{\mathcal{L}_{m,q}^{(t)}} & \mathsf{CRP}_{\mathcal{L}_{m,q}^{(t)}}\arrow[l,hook']\\
				\mathsf{Alg}_{\underline{\mathcal{L}_m}}\arrow[u, "W^{(t)}_{m,q}"]&\mathsf{PerfAlg}_{\underline{\mathcal{L}_m}}\arrow[l,hook']\arrow[u, "W^{(t)}_{m,q}"']
			\end{tikzcd}
		\end{equation*} 
		called the \emph{Lazardian Witt vector functor}, furnishing  $\mathbf{A}_{\underline{\mathcal{L}_m}}^{m + 1}$ with the structure of an $\mathcal{L}_{m,q}^{(t)}$-algebra scheme. 
	\end{theorem}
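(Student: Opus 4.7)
The plan is to verify both the ring axioms and the $\mathcal{L}_{m,q}^{(t)}$-algebra structure by a single universal embedding into a genuine strict $\pi$-ring, in the spirit of Serre~\cite[Chapter~II,~\S6]{Ser79}. By Proposition~\ref{Prop:DirectProofOfPolynomial}(i), each $Q^{\ast(t)}_{n,q}$ lies in $\underline{\mathcal{L}_m}[\omega_1^{q^t},\dots,\omega_n^{q^t};X_0,\dots,X_n;Y_0,\dots,Y_n]$, so the binary operations make sense on $k^{m+1}$ for any $\underline{\mathcal{L}_m}$-algebra $k$.

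For the universal verification, I would fix a finite index set $A$ and set $K := \underline{\mathcal{L}_m}[X_{\alpha,j},Y_{\alpha,j},Z_{\alpha,j}\mid 0\le j\le m,\ \alpha\in A]^{\operatorname{pf}}$, a perfect $\underline{\mathcal{L}_m}$-algebra big enough to carry whichever axiom is under scrutiny. Form the $\pi$-adically complete ring $R := (\mathcal{L}_{m,q}^{(t)}\otimes_{\underline{\mathcal{L}_m}}K)^{\widehat{~}}$ and consider the map
\[
\phi\colon W^{(t)}_{m,q}(K) \longrightarrow R,\qquad (a_0,\dots,a_m)\mapsto \sum_{i=0}^m [a_i^{q^{-i}}]\pi^i,
\]
where $[-]$ denotes the Teichm\"uller lift attached to the perfect residue ring $K$ of $R$. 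The defining equation of the $Q$-polynomials in Proposition~\ref{Prop:DirectProofOfPolynomial} says precisely that $\phi$ intertwines the operations $+,\times$ on the source (defined via the $Q^{\ast(t)}_{i,q}$) with the honest ring operations on the target, and $\phi$ is injective by uniqueness of $\pi$-adic Teichm\"uller expansions. Consequently the commutative ring axioms, together with the relations $\pi^{m+1}=0$ and $p=\sum\omega_i^{q^t}\pi^i$, pull back to $W^{(t)}_{m,q}(K)$. Each such axiom is a polynomial identity in the universal coordinates $X_{\alpha,j},Y_{\alpha,j},Z_{\alpha,j}$, so it specializes to every $\underline{\mathcal{L}_m}$-algebra $k$. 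Functoriality in $k$ is then automatic from the polynomial description.

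It remains to produce the restricted factorization through $\mathsf{CRP}_{\mathcal{L}_{m,q}^{(t)}}$ on perfect $k$. Here I would check that $\phi$, now with $K$ replaced by a perfect $k$, is a bijection onto $(\mathcal{L}_{m,q}^{(t)}\otimes_{\underline{\mathcal{L}_m}}k)^{\widehat{~}}$, whence the three axioms of Definition~\ref{Def:CRP} can be read directly off the target: it is $\pi$-adically complete by construction, its residue ring is the perfect $\underline{\mathcal{L}_m}$-algebra $k$, and strictness amounts to the fact that every element admits a unique $\pi$-adic expansion with Teichm\"uller coefficients. The main obstacle I anticipate is the strictness axiom~(iii) of Definition~\ref{Def:CRP}, which requires controlling $\pi$-torsion in $W^{(t)}_{m,q}(k)$; this should follow once $\phi$ is recognized as an isomorphism, inheriting from the analogous property of $\mathcal{L}_{m,q}^{(t)}$ itself.
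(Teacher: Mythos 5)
Your overall strategy is the same as the paper's (transfer the ring and $\mathcal{L}_{m,q}^{(t)}$-algebra structure along the Teichm\"uller-expansion map $(a_i)_i\mapsto\sum_i[a_i^{q^{-i}}]\pi^i$ into a strict $\pi$-adically complete ring, then deduce the general case from universal polynomial identities), but the object you build the whole argument on does not exist as described. The ring $R:=(\mathcal{L}_{m,q}^{(t)}\otimes_{\underline{\mathcal{L}_m}}K)^{\widehat{~}}$ requires a ring homomorphism $\underline{\mathcal{L}_m}\to\mathcal{L}_{m,q}^{(t)}$, and there is none: $\underline{\mathcal{L}_m}$ is an $\mathbf{F}_p$-algebra while $p=\sum_i\omega_i^{q^t}\pi^i\neq 0$ in $\mathcal{L}_{m,q}^{(t)}$ (the multiplicative section $[-]$ is not additive, so it cannot serve as a structure map). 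Under the only degenerate reading (pushing $\mathcal{L}_{m,q}^{(t)}$ through its residue map) the tensor product collapses to $K$ with $\pi=0$, and your $\phi$ becomes projection onto the zeroth coordinate, which is neither injective nor structure-transferring. The same problem recurs at the end, where you assert that for perfect $k$ the map $\phi$ is a bijection onto $(\mathcal{L}_{m,q}^{(t)}\otimes_{\underline{\mathcal{L}_m}}k)^{\widehat{~}}$: producing a strict $\pi$-adically complete $\mathcal{L}_{m,q}^{(t)}$-algebra with prescribed perfect residue ring is precisely the nontrivial input here, and it cannot be manufactured by a naive base change.

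The paper supplies that input by invoking Lazard's classical theorem that the residue functor $U:\mathsf{CRP}_{\mathcal{L}_{m,q}^{(t)}}\to\mathsf{PerfAlg}_{\underline{\mathcal{L}_m}}$ is an equivalence, choosing a quasiinverse $H$ and comparing $W^{(t)}_{m,q}(k)$ with $H(k)$ for perfect $k$; the reduction of arbitrary $k$ to this case (every $\underline{\mathcal{L}_m}$-algebra is a quotient of a polynomial ring sitting inside its perfection) is the same specialization step you propose, and that part of your argument is fine. If you want to avoid citing the quasiinverse for the universal verification, the correct replacement for your $R$ is the explicit strict ring $\mathcal{L}_{m,q}^{(t)}[X_{\alpha,j}^{p^{-\infty}},Y_{\alpha,j}^{p^{-\infty}},Z_{\alpha,j}^{p^{-\infty}}\mid \alpha,j]^{\widehat{~}}$, exactly as in Proposition~\ref{Prop:DirectProofOfPolynomial}, whose residue ring is your $K$; but to get the factorization through $\mathsf{CRP}_{\mathcal{L}_{m,q}^{(t)}}$ for an arbitrary perfect $k$ you would still need essential surjectivity of $U$ (or an additional lifting argument for quotients of the free case), so the appeal to Lazard's theorem cannot simply be dropped.
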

	\begin{proof}
		Choose a quasiinverse $H: \mathsf{PerfAlg}_{\underline{\mathcal{L}_m}}\to \mathsf{CRP}_{\mathcal{L}_{m,q}^{(t)}}$ to the equivalence given by the residue functor. If $k$ is perfect, then the function
		\begin{equation*}
			\begin{tikzcd}[row sep = 0.5ex]
				W^{(t)}_{m,q}(k) \arrow[r]& H(k) \\
				(\alpha_i)_i \arrow[r,maps to]& \sum_i [\alpha_i^{q^{-i}}]\pi^i
			\end{tikzcd}
		\end{equation*} 
		is additive and multiplicative by definition of the binary operations on $W^{(t)}_{m,q}(k)$. Strictness of $H(k)$ implies that the function is injective. Perfectness of $k$ and $\pi$-adic completeness of $H(k)$ imply that the function is surjective. Thus the function $W_{m,q}^{(t)}(k) \to H(k)$ is a ring isomorphism. The structure map $\mathcal{L}_{m,q}^{(t)} \to H(k)$ composed with the inverse of this isomorphism gives an $\mathcal{L}_{m,q}^{(t)}$-algebra structure on $W^{(t)}_{m,q}(k)$ satisfying $\omega_j \mapsto (\omega_j,0,0,\dots)$ and $\pi \mapsto (0,1,0,\dots)$. Once the claim holds for $k$, it holds for all $\underline{\mathcal{L}_m}$-subalgebras and quotients of $k$. But every $\underline{\mathcal{L}_m}$-algebra is a quotient of  $\underline{\mathcal{L}_m}[X_\lambda \mid \lambda \in \Lambda] \subset \underline{\mathcal{L}_m}[X_\lambda\mid \lambda \in \Lambda]^{\operatorname{pf}}$ for some indexing set $\Lambda$. 
	\end{proof}
	\subsection{Additional structures}
	The $p$-typical Witt vector functor comes with the additional data of a multiplicative lift, a Frobenius operator, and a Verschiebung operator. The Lazardian Witt vector functor also has these additional data with some modifications.

	Unlike the $p$-typical Witt vector functor, we do not have a Frobenius endomorphism because the coefficients of $Q_{i,q}^{\ast(t)}$ are, in general, not fixed by any iterate of the Frobenius map. Instead, we have a Frobenius operator $W^{(t)}_{m,q} \to W^{(t + 1)}_{m,q}$. Given that the Frobenius operator shifts from $t$ to $t + 1$, we would expect that the Verschiebung does the opposite if we are to obtain the classical formula $\pi = FV$. This happens to be the case.
	\begin{proposition}\label{Prop:tuplesToSum}
		Let $t \in \mathbf{Z}$, let $m \in \mathbf{N}\cup \{\infty\}$, and let $q>1$ be a $p$th power. The assignment 
		\begin{equation*}
			\begin{tikzcd}[row sep = .5ex]
				\mathbf{A}^1_{\underline{\mathcal{L}_m}} = W^{(t)}_{0,q} \arrow[r,"{[-]}"]&  W^{(t)}_{m,q}\\
				 \alpha \arrow[r,maps to]&  (\alpha,0,\dots)
			\end{tikzcd}
		\end{equation*}
		defines a natural transformation of multiplicative monoid schemes over $\underline{\mathcal{L}_m}$. For all $ (\beta_i)_i \in W^{(t)}_{m,q}(k)$, one has $[\alpha](\beta_i)_i= (\alpha^{q^i}\beta_i)_i$. If $k$ is perfect, then $(\beta_i)_i = \sum_i [\beta_i^{q^{-i}}]\pi^i$ in the $\pi$-adically complete $\mathcal{L}_{m,q}^{(t)}$-algebra $W^{(t)}_{m,q}(k)$.
	\end{proposition}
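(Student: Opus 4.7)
The plan is to reduce all three claims to the case of perfect $k$, where we can work inside the strict $\pi$-adically complete $\mathcal{L}_{m,q}^{(t)}$-algebra $H(k)$ via the ring isomorphism $W^{(t)}_{m,q}(k)\xrightarrow{\sim} H(k)$, $(\alpha_i)_i \mapsto \sum_i [\alpha_i^{q^{-i}}]\pi^i$, constructed in the proof of Theorem~\ref{Theorem:LazardianWittVectorConstruction}. All three assertions are coordinatewise polynomial identities, so once they are established for perfect $k$ they will propagate to arbitrary $\underline{\mathcal{L}_m}$-algebras by the same reduction used at the end of that proof.

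First I would dispose of the last claim, which is tautological: the very definition of the isomorphism $W^{(t)}_{m,q}(k)\to H(k)$ sends the tuple $(\beta_i)_i$ to $\sum_i [\beta_i^{q^{-i}}]\pi^i$; transporting this equality back through the isomorphism gives $(\beta_i)_i = \sum_i [\beta_i^{q^{-i}}]\pi^i$ inside $W^{(t)}_{m,q}(k)$. Substituting $\beta_0 = \alpha$ and $\beta_i = 0$ for $i\geq 1$ shows that $(\alpha,0,0,\dots)$ corresponds to the Teichm\"uller lift $[\alpha]\in H(k)$, justifying the notation.

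Since the Teichm\"uller section $[-]: k \to H(k)$ is multiplicative, it follows immediately that $[-]: W_{0,q}^{(t)}(k)\to W^{(t)}_{m,q}(k)$ is multiplicative for perfect $k$. For the formula $[\alpha](\beta_i)_i = (\alpha^{q^i}\beta_i)_i$, I would compute in $H(k)$:
\begin{equation*}
	[\alpha]\cdot \sum_i [\beta_i^{q^{-i}}]\pi^i = \sum_i [\alpha \beta_i^{q^{-i}}]\pi^i = \sum_i [(\alpha^{q^i}\beta_i)^{q^{-i}}]\pi^i,
\end{equation*}
which corresponds under the isomorphism to the tuple $(\alpha^{q^i}\beta_i)_i$.

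Finally, both $[\alpha][\beta]=[\alpha\beta]$ and $[\alpha](\beta_i)_i = (\alpha^{q^i}\beta_i)_i$ are, component by component, polynomial identities in the coordinates $\alpha$, $\beta$, $\beta_0,\beta_1,\dots$ with coefficients in $\underline{\mathcal{L}_m}$, since the components of the product on the left are given by $Q^{\times(t)}_{i,q}$. Verifying them over the perfect algebra $\underline{\mathcal{L}_m}[\alpha,\beta,\beta_0,\beta_1,\dots]^{\operatorname{pf}}$ therefore forces the identities to hold in the polynomial ring itself, so they hold after any base change, yielding the general statement. Naturality in $k$ is automatic because $[-]$ is given by a universal polynomial formula. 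I do not expect a genuinely hard step here; the only care needed is to set up the translation to Teichm\"uller expansions in $H(k)$, after which everything follows from the multiplicativity of the Teichm\"uller section in a strict $\pi$-adically complete ring.
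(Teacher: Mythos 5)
Your proposal is correct and follows essentially the same route as the paper: reduce to perfect $k$ via the isomorphism $W^{(t)}_{m,q}(k)\xrightarrow{\sim}H(k)$, deduce everything from the multiplicativity of the Teichm\"uller section and the $\pi$-adic expansion in $H(k)$, and propagate to arbitrary $\underline{\mathcal{L}_m}$-algebras because the claims are coordinatewise polynomial identities. You merely spell out the details that the paper's one-sentence proof leaves implicit.
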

	\begin{proof}
		It suffices to calculate this on perfect $\underline{\mathcal{L}_m}$-algebras, in which case multiplicativity follows from the usual theory of the multiplicative section, and the calculations $[\alpha](\beta_i)_i = (\alpha^{q^i}\beta_i)_i$ and $(\beta_i)_i = \sum_i [\beta_i^{q^{-i}}]\pi^i$ follow from a choice of isomorphism $W^{(t)}_{m,q}(k) \xrightarrow{\sim}H(k)$ into an object $H(k)$ of $\mathsf{CRP}_{\mathcal{L}_{m,q}^{(t)}}$ with perfect residue ring $k$. 
	\end{proof}
	\begin{proposition}
		Let $t \in \mathbf{Z}$, let $m \in \mathbf{N}\cup \{\infty\}$, and let $q > 1$ be a $p$th power.  The Frobenius operator
		\begin{equation*}
			\begin{tikzcd}[row sep = .5ex]
				W^{(t)}_{m,q}\arrow[r]&W^{(t+1)}_{m,q}\\ (\alpha_0,\alpha_1,\dots) \arrow[r,maps to]& (\alpha_0^q,\alpha_1^q,\dots)
			\end{tikzcd}
		\end{equation*} 
		defines a morphism of ring schemes over $\underline{\mathcal{L}_m}$.
	\end{proposition}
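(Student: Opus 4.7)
The plan is to observe that $F$ is trivially a morphism of $\underline{\mathcal{L}_m}$-schemes, since it is given coordinate-wise by the polynomial $X_i^q$ with $\mathbf{F}_p$-coefficients, so the only content is to verify that $F$ respects addition, multiplication, and the ring identity on $k$-valued points for every $\underline{\mathcal{L}_m}$-algebra $k$. This verification will reduce almost mechanically to the coefficient-Frobenius identity $Q_{i,q}^{\ast(t) q} = Q_{i,q}^{\ast(t+1)}$ proved in Proposition~\ref{Prop:DirectProofOfPolynomial}(ii).

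For addition, given $\alpha, \beta \in W^{(t)}_{m,q}(k)$, I would compute
\[
F(\alpha + \beta)_i = Q_{i,q}^{+(t)}(\alpha, \beta)^q = Q_{i,q}^{+(t) q}(\alpha^q, \beta^q) = Q_{i,q}^{+(t+1)}(F(\alpha), F(\beta)) = \bigl(F(\alpha) + F(\beta)\bigr)_i,
\]
where the first equality is the definition of addition in $W^{(t)}_{m,q}(k)$, the second uses the convention $f(X,Y)^q = f^q(X^q, Y^q)$, the third invokes Proposition~\ref{Prop:DirectProofOfPolynomial}(ii), and the last is the definition of addition in $W^{(t+1)}_{m,q}(k)$. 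The verification for multiplication is word-for-word the same with $+$ replaced by $\times$. For the multiplicative identity, Proposition~\ref{Prop:tuplesToSum} gives $1 = [1] = (1, 0, 0, \ldots)$, whence $F(1) = (1^q, 0, 0, \ldots) = (1, 0, 0, \ldots) = 1$; combined with the obvious $F(0) = 0$, this shows $F$ is a ring homomorphism on $k$-points.

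There is no real obstacle in this proof: once Proposition~\ref{Prop:DirectProofOfPolynomial}(ii) is in hand, the coefficient-Frobenius shift in the polynomials $Q_{i,q}^{\ast(t)}$ is exactly what is needed to convert the operations in $W^{(t)}_{m,q}$ into those in $W^{(t+1)}_{m,q}$ after post-composing with coordinate-wise $q$-th powers. The proposition is really just the scheme-theoretic repackaging of that polynomial identity.
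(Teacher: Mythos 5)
Your proof is correct and takes essentially the same route as the paper: the paper's proof also reduces the ring-homomorphism property on points to the identity $Q_{i,q}^{\ast(t)q} = Q_{i,q}^{\ast(t+1)}$ from Proposition~\ref{Prop:DirectProofOfPolynomial}(ii), and disposes of the scheme-theoretic/naturality part by noting that coordinate-wise $q$-th powers commute with ring morphisms (equivalently, your observation that $F$ is given by the comorphism $X_i \mapsto X_i^q$ over $\underline{\mathcal{L}_m}$).
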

	\begin{proof}
		The fact that the Frobenius operator is a ring morphism on each $\underline{\mathcal{L}}_m$-algebra follows from $Q_{i,q}^{\ast(t)q} = Q_{i,q}^{\ast(t + 1)}$. Naturality follows from the Frobenius operator commuting with ring morphisms.
	\end{proof}
	\begin{proposition}
		Let $t \in \mathbf{Z}$, let $m \in \mathbf{N}\cup \{\infty\}$, and let $q > 1$ be a $p$th power.
		\begin{enumerate}[\normalfont (i)]
			\item The Verschiebung 
			\begin{equation*}
				\begin{tikzcd}[row sep = .5ex]
					W^{(t)}_{m,q}\arrow[r, "V"]& W^{(t-1)}_{m+1,q}\\
					(\alpha_0,\alpha_1,\alpha_2,\dots) \arrow[r, maps to ]&(0,\alpha_0,\alpha_1,\dots)
				\end{tikzcd}
			\end{equation*} 
			defines a monomorphism of additive group schemes over $\underline{\mathcal{L}_m}$.
			\item  If $F: W^{(t)}_{m + 1,q}\to W^{(t+1)}_{m + 1,q}$ denotes the Frobenius, then $\pi \iota = FV$, where $\iota: W^{(t)}_{m,q} \to W^{(t)}_{m + 1,q}$ is the injection defined by $\iota(\alpha_0,\dots,\alpha_m) = (\alpha_0,\dots,\alpha_m,0)$.  
			\item Let $m, r\in \mathbf{N}$. We have an exact sequence
			\begin{equation*}
				\begin{tikzcd}
					0 \arrow[r]& W^{(t)}_{m,q}  \arrow[r, "V^r"]& W^{(t-r)}_{m + r,q} \arrow[r] & W^{(t-r)}_{r-1,q} \arrow[r]&0
				\end{tikzcd}
			\end{equation*}
			of additive group schemes.
		\end{enumerate}
	\end{proposition}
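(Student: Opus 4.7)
The plan is to reduce all three claims to polynomial identities in the coordinates which may then be verified on a perfect $k$, where the Lazardian Witt ring is identifiable with a strict $\pi$-adically complete ring via Proposition~\ref{Prop:tuplesToSum}. Since the Lazardian Witt operations are given by honest polynomials (Proposition~\ref{Prop:DirectProofOfPolynomial}(i)), each of the asserted morphism properties and identities amounts to a system of polynomial identities, and by the argument concluding Theorem~\ref{Theorem:LazardianWittVectorConstruction}, such identities descend from the universal perfect polynomial algebra to arbitrary $k$.

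I would prove (ii) first, as it is the anchor for (i). On a perfect $k$, write $\iota(\alpha) = \sum_{j=0}^m [\alpha_j^{q^{-j}}]\pi^j$ in the ring-theoretic incarnation of $W^{(t)}_{m+1,q}(k)$, multiply by $\pi$, and reindex to obtain $\pi\iota(\alpha) = (0, \alpha_0^q, \alpha_1^q, \ldots, \alpha_m^q)$ in coordinates. This matches $FV(\alpha) = F(0, \alpha_0, \ldots, \alpha_m) = (0, \alpha_0^q, \ldots, \alpha_m^q)$ directly from the coordinate definitions of $F$ and $V$. As this is a polynomial identity, it extends to arbitrary $k$.

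For (i), I would deduce additivity of $V$ from (ii). Multiplication by $\pi$ is additive in any ring, while $\iota$, though not literally additive, has defect $\iota(\alpha+\beta) - \iota(\alpha) - \iota(\beta)$ concentrated in the top coordinate: the polynomials $Q^{+(t)}_{i,q}$ for $i \leq m$ depend only on the first $i+1 \leq m+1$ coordinates of their arguments, so the lower coordinates of this defect vanish. A tuple supported only in the top coordinate corresponds on perfect $k$ to an element of $\pi^{m+1} W^{(t)}_{m+1,q}(k)$, which is annihilated by $\pi$ since $\pi^{m+2} = 0$ in $\mathcal{L}^{(t)}_{m+1,q}$. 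Hence $\pi\iota$ is additive, and by (ii) so is $FV$. On perfect $k$ the Frobenius $F$ acts as coordinate-wise $q$th power and is therefore injective, whence additivity of $V$. Injectivity of $V$ on tuples is immediate. The main obstacle is the careful bookkeeping of the three distinct base rings $\mathcal{L}^{(t)}_{m,q}$, $\mathcal{L}^{(t-1)}_{m+1,q}$, and $\mathcal{L}^{(t)}_{m+1,q}$ that appear as one chases $V$, $F$, $\iota$, and the annihilator computation through the correct incarnations of the Witt ring.

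For (iii), iterating (i) gives $V^r$ as a monomorphism of additive group schemes implementing the $r$-fold coordinate shift $(\alpha_0,\ldots,\alpha_m)\mapsto (0,\ldots,0,\alpha_0,\ldots,\alpha_m)$. The projection $W^{(t-r)}_{m+r,q} \to W^{(t-r)}_{r-1,q}$ onto the first $r$ coordinates is additive by the same bounded-support property of the polynomials $Q^{+(t-r)}_{i,q}$ for $i < r$. The composition of $V^r$ with this projection visibly vanishes; the kernel of the projection is precisely the tuples whose first $r$ coordinates are zero, which manifestly lie in the image of $V^r$; and surjectivity of the projection is obvious.
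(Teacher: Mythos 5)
Your proposal is correct and follows essentially the same route as the paper: reduce to perfect $\underline{\mathcal{L}_m}$-algebras, use the $\pi$-adic expansion of Proposition~\ref{Prop:tuplesToSum} to establish $\pi\iota = FV$, deduce additivity of $V$ from bijectivity of $F$ on perfect algebras, and handle (iii) by direct coordinate computation with the truncation. The only difference is that you spell out why $\pi\iota$ is additive (the defect of $\iota$ sits in the top coordinate and is killed by $\pi$ since $\pi^{m+2}=0$), a point the paper leaves implicit.
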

	\begin{proof}
		(i) and (ii) Naturality of $V$ is immediate, so it remains to  show additivity and the formula $\pi\iota = FV$. It suffices to establish these properties for perfect $\underline{\mathcal{L}_m}$-algebras. In this case additivity of $V$ follows from the formula $\pi\iota = FV$ since $F$ is an isomorphism for perfect $k$. 
		
		By Proposition~\ref{Prop:tuplesToSum}, setting $\alpha_{-1}:= 0$, we may write
		\begin{equation*}
			FV(\alpha_i)_i = (\alpha_{i-1}^q)_i = \sum_i [\alpha_i^{q^{-i}}]\pi^{i + 1} = \pi \iota(\alpha_i)_i.
		\end{equation*}
		Thus $\pi \iota = FV$ and the Verschiebung is additive.

		(iii) The calculations
		\begin{equation*}
			(\alpha_0,\dots,\alpha_m)\mapsto \begin{matrix}
				(0,\dots,0,\alpha_0,\dots,\alpha_m) \\
				(\beta_0,\dots,\beta_{ m + r})
			\end{matrix} \mapsto \begin{matrix}
			0 \\
			(\beta_0,\dots,\beta_{r-1}),
			\end{matrix}
		\end{equation*}
		establish that the kernel of truncation is the image of $V^r$. Injectivity of $V$ and surjectivity of truncation are immediate by calculating on coordinates.
	\end{proof}
	\section{Explicit adjoint equivalence}\label{Sec:ExplicitAdjEq}
	To verify the $\mathcal{L}_{m,q}^{(t)}$-algebra scheme structure of $W := W^{(t)}_{m,q}$ in the proof of Theorem~\ref{Theorem:LazardianWittVectorConstruction}, we appealed to the existence of a quasiinverse to the residue functor $U: \mathsf{CRP}_{\mathcal{L}^{(t)}_{m,q}}\to \mathsf{PerfAlg}_{\underline{\mathcal{L}_m}}$, where $U:= -\otimes_{\mathcal{L}_{m,q}^{(t)}}\underline{\mathcal{L}_m}$. The full faithfulness and essential surjectivity of $U$ is the classical result of Lazard~\cite[pp.~71-72]{Laz54}. Now that we know that $W$ is an $\mathcal{L}_{m,q}^{(t)}$-algebra scheme, the goal for this section is to construct the adjoint equivalence in the right column of the diagram
	\begin{equation*}
		\begin{tikzcd}
			\mathsf{Alg}_{\mathcal{L}_{m,q}^{(t)}}\arrow[from = d, "W"'] &\mathsf{CRP}_{\mathcal{L}_{m,q}^{(t)}}\arrow[l,hook']\arrow[ d, shift left = 1ex, "U",dashed]\\
			\mathsf{Alg}_{\underline{\mathcal{L}_m}}&\mathsf{PerfAlg}_{\underline{\mathcal{L}_m}}, \arrow[l,hook']\arrow[u, "W", shift left = 1ex, "\dashv"',dashed]
		\end{tikzcd}
	\end{equation*}
	which is one of the two adjunctions comprising Theorem~\ref{Thm:LazardianWittVectors}.

	We start by constructing the unit of the adjoint equivalence.
	\begin{proposition}
		Let $k$ be a perfect $\underline{\mathcal{L}_m}$-algebra. The functions $\eta_k: k \to UW(k)$ defined by $\alpha \mapsto \overline{(\alpha,0,\dots)}$ determine a natural isomorphism 
		\begin{equation*}
			\eta: 1_{\mathsf{PerfAlg}_{\underline{\mathcal{L}_m}}} \to UW. 
		\end{equation*}
	\end{proposition}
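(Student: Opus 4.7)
The plan is to identify $\eta_k$ with the identity map on $k$ by passing through the explicit isomorphism $W(k) \xrightarrow{\sim} H(k)$ constructed in the proof of Theorem~\ref{Theorem:LazardianWittVectorConstruction}, where $H$ denotes a fixed quasiinverse to the residue functor $U$ on perfect $\underline{\mathcal{L}_m}$-algebras.

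First I would recall that, for perfect $k$, the proof of Theorem~\ref{Theorem:LazardianWittVectorConstruction} provides a ring isomorphism $\varphi_k: W^{(t)}_{m,q}(k) \xrightarrow{\sim} H(k)$ sending $(\alpha_i)_i$ to $\sum_i [\alpha_i^{q^{-i}}]\pi^i$. In particular $\varphi_k(\alpha, 0, 0, \dots) = [\alpha]$. Applying $U = -\otimes_{\mathcal{L}_{m,q}^{(t)}}\underline{\mathcal{L}_m}$ and composing with the canonical isomorphism $UH(k) \cong k$ coming from the residue of $H(k)$, I obtain a ring isomorphism $\sigma_k: UW(k) \xrightarrow{\sim} k$ satisfying $\sigma_k \circ \eta_k = \operatorname{id}_k$, since the multiplicative section $[-]: k \to H(k)$ is by construction a section of the residue map. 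Therefore $\eta_k$ is itself a ring isomorphism, and in particular a morphism of $\underline{\mathcal{L}_m}$-algebras.

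For naturality, given a morphism $f: k \to k'$ of perfect $\underline{\mathcal{L}_m}$-algebras, functoriality of $W$ acts coordinatewise on the Teichmüller tuple: $W(f)(\alpha, 0, 0, \dots) = (f(\alpha), 0, 0, \dots)$. Reducing modulo $\pi$ yields $UW(f) \circ \eta_k = \eta_{k'} \circ f$, which is the desired naturality square.

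I expect no genuine obstacle: the content is essentially repackaged from Theorem~\ref{Theorem:LazardianWittVectorConstruction} together with the defining property of the multiplicative section. The only subtle point worth noting is that although $\sigma_k$ is built from a choice of quasiinverse $H$, it is forced to coincide with $\eta_k^{-1}$, so the natural transformation $\eta$ itself does not depend on that auxiliary choice.
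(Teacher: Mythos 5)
Your route is correct in outline but differs from the paper's, and one step needs repair. The paper does not go back through the quasiinverse $H$: it shows directly that $\ker(W(k)\to k)=\pi W(k)$ for perfect $k$, using $(0,\alpha_1,\alpha_2,\dots)=FV(\alpha_1^{1/q},\alpha_2^{1/q},\dots)=\pi(\alpha_1^{1/q},\alpha_2^{1/q},\dots)$, so that first-coordinate truncation descends to a set-theoretic inverse of $\eta_k$; it then verifies the $\underline{\mathcal{L}_m}$-algebra compatibility by a separate diagram chase, using that $\mathcal{L}_{m,q}^{(t)}\to\underline{\mathcal{L}_m}$ is surjective, hence an epimorphism. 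Your argument instead reuses the comparison isomorphism $\varphi_k\colon W(k)\xrightarrow{\sim}H(k)$ from Theorem~\ref{Theorem:LazardianWittVectorConstruction}; that is legitimate and shorter, though note the paper's explicit computation with $FV=\pi$ is what powers the subsequent Remark showing perfectness of $k$ is genuinely needed, so the direct approach buys a bit more.

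The gap: ``$\eta_k$ is a ring isomorphism, and in particular a morphism of $\underline{\mathcal{L}_m}$-algebras'' is a non-sequitur as written — a ring isomorphism between two $\underline{\mathcal{L}_m}$-algebras need not respect the structure maps, and this compatibility is exactly the point the paper's diagram chase addresses. Within your setup the fix is short but must be said: $\varphi_k$ is an isomorphism of $\mathcal{L}_{m,q}^{(t)}$-algebras by construction (the algebra structure on $W(k)$ was \emph{defined} by transport along $\varphi_k$), and the identification $UH(k)\cong k$ is a morphism in $\mathsf{PerfAlg}_{\underline{\mathcal{L}_m}}$, being part of the quasiinverse data. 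Hence $\sigma_k$ is an $\underline{\mathcal{L}_m}$-algebra isomorphism, and since you show $\sigma_k\circ\eta_k=\operatorname{id}_k$, it follows that $\eta_k=\sigma_k^{-1}$ is one as well. With that sentence added, your proof is complete; the naturality argument (coordinatewise action of $W$ on morphisms) coincides with the paper's.
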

	\begin{proof}
		Since $k$ is perfect, we have
		\begin{equation*}
			(0,\alpha_1,\alpha_2,\dots) = FV(\alpha_1^{1/q},\alpha_2^{1/q},\dots) = \pi(\alpha_1^{1/q},\alpha_2^{1/q},\dots).
		\end{equation*}
		On the other hand $\pi = (0,1,0,\dots)\in \ker(W(k)\to k)$, so $\pi W(k) = \ker(W(k)\to k)$.
		Therefore the truncation morphism $W(k) \to k$ admits the factorization
		\begin{equation*}
			\begin{tikzcd}[row sep = .5ex]
				UW(k)\arrow[r, "\sim"]&k\\ \overline{(\alpha_0,\alpha_1,\dots)} \arrow[r,maps to]& \alpha_0
			\end{tikzcd}
		\end{equation*} 
		which we observe is the set-theoretic inverse to $\eta_k$. Therefore $\eta_k$ is an isomorphism of rings. To see that $\eta_k$ is a $\mathsf{PerfAlg}_{\underline{\mathcal{L}_m}}$-algebra map, we consider the diagram in which only commutativity of the bottom triangle
		\begin{equation*}
			\begin{tikzcd}
				\mathcal{L}_{m,q}^{(t)} \arrow[rr]\arrow[dd]& & W(k)\arrow[ld]\arrow[dd] \\
				&k\\
				\underline{\mathcal{L}_m} \arrow[ru]\arrow[rr]&&UW(k)\arrow[lu, "\sim"']
			\end{tikzcd}
		\end{equation*}
		is in question. A diagram chase through the outer rectangle shows that the bottom triangle commutes because $\mathcal{L}_{m,q}^{(t)} \to \underline{\mathcal{L}_m}$ is a surjection and hence an epimorphism. Naturality in $k$ is immediate since $W$ acts on morphisms coordinatewise. 
	\end{proof}
	
	\begin{remark}
		We used the fact that $k$ is perfect to obtain $\pi W(k) = \ker(W(k) \to k)$. Conversely, if  $\pi W(k) = \ker(W(k) \to k)$, then $(0,\alpha,0,\dots) \in \pi W(k)$ for all $\alpha \in k$. On the other hand
		\begin{equation*}
			\pi (\beta_0,\beta_1,\dots) = FV(\beta_0,\beta_1,\dots) = (0, \beta_0^q,\dots).
		\end{equation*}
		Thus the containments $k \supset k^p \supset k^q$ are equalities and $k$ is perfect. So the adjunction only holds for perfect $\underline{\mathcal{L}_m}$-algebras, despite $W$ being defined on all $\underline{\mathcal{L}_m}$-algebras.
	\end{remark}

	We now construct the counit.
	\begin{proposition}\label{Prop:WUcounit}
		Let $A$ be an object of $\mathsf{CRP}_{\mathcal{L}_{m,q}^{(t)}}$. The $\mathcal{L}_{m,q}^{(t)}$-algebra morphisms $\varepsilon_A: WU(A) \to A$ defined by $(\alpha_0,\alpha_1,\dots) \mapsto \sum_i [\alpha_i^{q^{-i}}]\pi^i$ determine a natural isomorphism 
		\begin{equation*}
			\varepsilon: WU \to 1_{\mathsf{CRP}_{\mathcal{L}^{(t)}_{m,q}}}.
		\end{equation*}
	\end{proposition}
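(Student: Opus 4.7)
The plan is to verify in four steps that $\varepsilon_A$ is a well-defined natural isomorphism of $\mathcal{L}_{m,q}^{(t)}$-algebras. The argument is essentially that of the proof of Theorem~\ref{Theorem:LazardianWittVectorConstruction}, now applied directly to $A$ in place of the auxiliary object $H(k)$; this is legitimate because the only properties of $H(k)$ used there are that it lies in $\mathsf{CRP}_{\mathcal{L}_{m,q}^{(t)}}$ with perfect residue ring, which $A$ itself satisfies with $k := U(A)$.

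First, $\varepsilon_A$ is well-defined: the multiplicative section $[-]: U(A) \to A$ exists by our conventions, and $\sum_i [\alpha_i^{q^{-i}}]\pi^i$ converges by $\pi$-adic completeness of $A$. Additivity and multiplicativity follow by specializing the defining equation
\begin{equation*}
	\sum_i X_i^{q^{-i}}\pi^i \ast \sum_i Y_i^{q^{-i}}\pi^i = \sum_i [Q_{i,q}^{\ast(t)}(X,Y)^{q^{-i}}]\pi^i
\end{equation*}
from the universal ring $\mathcal{L}_q^{(t)}[X_i^{p^{-\infty}},Y_i^{p^{-\infty}}]^{\widehat{~}}$ to $A$ along the $\mathcal{L}_{m,q}^{(t)}$-algebra map sending $X_i^{q^{-j}} \mapsto [\alpha_i^{q^{-j}}]$ and $Y_i^{q^{-j}} \mapsto [\beta_i^{q^{-j}}]$, whose existence uses perfectness of $U(A)$ and multiplicativity of $[-]$; naturality of the multiplicative section identifies the universal multiplicative lifts with the corresponding lifts in $A$. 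Injectivity follows from strictness of $A$ by the standard inductive argument on $\pi$-adic expansions (reduce modulo $\pi$, then cancel $\pi$ using the annihilator condition $\operatorname{Ann}_A(\pi) = \pi^m A$ and iterate), and surjectivity follows from $\pi$-adic completeness of $A$ combined with perfectness of $U(A)$.

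Second, $\varepsilon_A$ respects the $\mathcal{L}_{m,q}^{(t)}$-algebra structure. By Theorem~\ref{Theorem:LazardianWittVectorConstruction}, the structure map of $WU(A)$ sends $\omega_j \mapsto (\omega_j,0,\dots)$ and $\pi \mapsto (0,1,0,\dots)$, which $\varepsilon_A$ sends respectively to $[\omega_j \in U(A)]$ and $\pi$ in $A$. Since $\omega_j \in \mathcal{L}_{m,q}^{(t)}$ is already the multiplicative lift of its residue and morphisms in $\mathsf{CRP}_{\mathcal{L}_{m,q}^{(t)}}$ preserve multiplicative lifts, the structure map $\mathcal{L}_{m,q}^{(t)} \to A$ also sends $\omega_j$ to $[\omega_j \in U(A)]$. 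Third, naturality is immediate: any morphism $f: A \to B$ in $\mathsf{CRP}_{\mathcal{L}_{m,q}^{(t)}}$ is $\pi$-adically continuous and commutes with multiplicative sections, so $f\circ \varepsilon_A = \varepsilon_B \circ WU(f)$ term by term in the series.

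The main obstacle is the additivity/multiplicativity step. The universal identity for $Q_{i,q}^{\ast(t)}$ produces exactly the relation we want, but one must extend the specialization from the variables $X_i, Y_i$ to their entire $p^{-\infty}$-towers in a way compatible with $[-]$; this is where perfectness of $U(A)$ is essential.
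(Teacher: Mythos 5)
Your proof is correct and follows essentially the same route as the paper: the ring-homomorphism property comes from the defining equation of the arithmetic polynomials (i.e.\ the argument of Theorem~\ref{Theorem:LazardianWittVectorConstruction} with $A$ in place of $H(k)$), naturality from continuity and compatibility with the multiplicative section, surjectivity from $\pi$-adic completeness, and injectivity from strictness via $\operatorname{Ann}_A(\pi)=\pi^m A$. Your extra verification of the structure map on $\omega_j$ and $\pi$ is a harmless elaboration of what the paper dismisses as ``by definition of the algebra structure on $W$''; just note that the same Teichm\"uller-lift argument also covers the roots $\omega_j^{q^{-n}}$, which together with continuity pins down the map on all of $\mathcal{L}_{m,q}^{(t)}$.
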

	\begin{proof}
		The fact that $\varepsilon_A$ is an $\mathcal{L}_{m,q}^{(t)}$-algebra morphism is by definition of the algebra structure on $W$ in terms of the arithmetic polynomials. Let $\Phi: A \to A'$ be an $\mathcal{L}_{m,q}^{(t)}$-morphism. Compatibility of morphisms with the multiplicative section implies that $\Phi[\alpha] = [U(\Phi)(\alpha)]$. Continuity of $\Phi$  implies that $\varepsilon_A$ is natural in $A$. 
		
		Surjectivity of $\varepsilon_A$ follows from existence of $\pi$-expansions in terms of coordinates due to $\pi$-adic completeness. Injectivity follows from uniqueness of coordinates when $\operatorname{Ann}_A(\pi) = \pi^m A$, which is a consequence of $A$ being strict.
	\end{proof}
	
	Finally, we verify the triangle identities.
	\begin{proposition}
		For all $\pi$-adically complete residually perfect $\mathcal{L}_{m,q}^{(t)}$-algebras $A$ and all perfect $\underline{\mathcal{L}_m}$-algebras $k$, the diagrams 
		\begin{equation*}
			\begin{tikzcd}[column sep = 10ex]
				W(k) \arrow[r, "W(\eta_k)"]\arrow[rd, "\operatorname{id}_{W(k)}"']& WUW(k)\arrow[d, "\varepsilon_{W(k)}"] \\
				& W(k) 
			\end{tikzcd}\quad 
			\begin{tikzcd}[column sep = 10ex]
				U(A) \arrow[r, "\eta_{U(A)}"]\arrow[rd, "\operatorname{id}_{U(A)}"']& UWU(A)\arrow[d, "U(\varepsilon_A)"] \\
				& U(A) 
			\end{tikzcd}
		\end{equation*}
		commute.
	\end{proposition}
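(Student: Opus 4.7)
The plan is to unwind the definitions of $\eta$ and $\varepsilon$ on a general element of each source ring and verify both triangles by direct coordinate computations, with Proposition~\ref{Prop:tuplesToSum} doing essentially all the work. No ring-theoretic subtleties beyond identifying the multiplicative section should arise.

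For the first triangle, fix $(\alpha_0, \alpha_1, \ldots) \in W(k)$. Since $W$ acts on morphisms coordinatewise, $W(\eta_k)$ sends this tuple to $(\eta_k(\alpha_0), \eta_k(\alpha_1), \ldots) \in WUW(k)$, and then $\varepsilon_{W(k)}$ sends the result to $\sum_i [\eta_k(\alpha_i)^{q^{-i}}]\pi^i$ in $W(k)$. Because $\eta_k$ is a morphism of perfect $\underline{\mathcal{L}_m}$-algebras, $\eta_k(\alpha_i)^{q^{-i}} = \eta_k(\alpha_i^{q^{-i}})$. The key step is to identify the multiplicative lift $[\eta_k(\beta)] \in W(k)$ for $\beta \in k$: by Proposition~\ref{Prop:tuplesToSum} the assignment $\beta \mapsto (\beta, 0, 0, \ldots)$ is a natural transformation of multiplicative monoid schemes, and its composition with the projection $W(k) \to UW(k)$ is precisely $\eta_k$, so by uniqueness of the multiplicative section $[-]: UW(k) \to W(k)$ we must have $[\eta_k(\beta)] = (\beta, 0, 0, \ldots)$. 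Substituting and applying the last clause of Proposition~\ref{Prop:tuplesToSum} yields
\begin{equation*}
\sum_i [\eta_k(\alpha_i^{q^{-i}})]\pi^i = \sum_i (\alpha_i^{q^{-i}}, 0, \ldots)\pi^i = (\alpha_0, \alpha_1, \ldots),
\end{equation*}
closing the first triangle.

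For the second triangle, fix $\overline{a} \in U(A)$ with lift $a \in A$. Then $\eta_{U(A)}(\overline{a}) = \overline{(\overline{a}, 0, 0, \ldots)}$ in $UWU(A)$, and $\varepsilon_A$ sends $(\overline{a}, 0, 0, \ldots)$ to $[\overline{a}] \pi^0 = [\overline{a}] \in A$. Its image under the quotient $A \to U(A)$ is $\overline{a}$ since the multiplicative section $[-]: U(A) \to A$ is a set-theoretic section of the reduction map. Hence $U(\varepsilon_A) \circ \eta_{U(A)} = \operatorname{id}_{U(A)}$.

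The only nontrivial step is the identification $[\eta_k(\beta)] = (\beta, 0, 0, \ldots)$ used in the first triangle, which follows immediately from Proposition~\ref{Prop:tuplesToSum} together with uniqueness of the multiplicative section. Everything else is a mechanical unwinding of definitions.
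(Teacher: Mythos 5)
Your proof is correct, and the second triangle is handled exactly as in the paper (apply $\varepsilon_A$ to $(\overline{a},0,\dots)$, get $[\overline{a}]$, reduce mod $\pi$). For the first triangle you take a genuinely more computational route: you identify the multiplicative section $[-]\colon UW(k)\to W(k)$ with $\eta_k(\beta)\mapsto(\beta,0,\dots)$ via uniqueness of multiplicative sections (which is legitimate, since $\eta_k$ is already known to be a ring isomorphism and Proposition~\ref{Prop:tuplesToSum} gives multiplicativity of $\beta\mapsto(\beta,0,\dots)$), and then close the triangle by the $\pi$-expansion identity $(\alpha_i)_i=\sum_i[\alpha_i^{q^{-i}}]\pi^i$ from the same proposition. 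The paper instead stops after computing $\varepsilon_{W(k)}\circ W(\eta_k)$ as $\sum_i[\overline{(\alpha_i^{q^{-i}},0,\dots)}]\pi^i$, observes that this composite reduces to the identity on $UW(k)$, and concludes by faithfulness of the residue functor $U$. The trade-off is mild: the paper's argument avoids pinning down $[\eta_k(\beta)]$ explicitly but leans on Lazard's full faithfulness result, whereas yours is a self-contained coordinate computation whose only extra input is the uniqueness clause in the paper's convention for $[-]$ (together with the well-definedness of your candidate section, which uses that $\eta_k$ is bijective). Either argument is complete; yours arguably makes the mechanism more transparent, and in fact the explicit identification $[\eta_k(\beta)]=(\beta,0,\dots)$ is implicit in the paper's proof of Proposition~\ref{Prop:tuplesToSum} anyway.
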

	\begin{proof}
		The composition of maps in the left diagram is given by
		\begin{equation*}
			(\alpha_i)_i \mapsto (\overline{(\alpha_i,0,\dots)})_i \mapsto \sum_i [\overline{(\alpha_i,0,\dots)}^{q^{-i}}]\pi^i = \sum_i [{\overline{(\alpha_i^{q^{-i}},0,\dots)}}]\pi^i
		\end{equation*}
		noting that the Frobenius acts on coordinates in $W(k)$. This reduces to the identity on residue rings $UW(k) \to UW(k)$, and hence the left diagram commutes by faithfulness of the functor $U$. 
		
		The composition of maps in the right diagram is given by
		\begin{equation*}
			\alpha \mapsto (\alpha,0,\dots) \mapsto \overline{[\alpha] + \sum_{i\geq 1}[0^{q^{-i}}]\pi^i} = \alpha
		\end{equation*}
		since $\overline{[\alpha]} = \alpha$ and  $\pi = 0$ in the residue  ring $U(A)$.
	\end{proof}
	\begin{remark}
		If we let $\mathsf{crp}_{\mathcal{L}_{m,q}^{(t)}}$ be the category $\mathsf{CRP}_{\mathcal{L}_{m,q}^{(t)}}$ with the strictness condition removed, then we obtain a diagram
		\begin{equation*}
			\begin{tikzcd}
				\mathsf{crp}_{\mathcal{L}_{m,q}^{(t)}}\arrow[ d, shift left = 1ex, "U"] &\mathsf{CRP}_{\mathcal{L}_{m,q}^{(t)}}\arrow[l,hook']\arrow[ d, shift left = 1ex, "U"]\\
				\mathsf{PerfAlg}_{\underline{\mathcal{L}_m}}\arrow[u, "W", shift left = 1ex, "\dashv"']&\mathsf{PerfAlg}_{\underline{\mathcal{L}_m}} \arrow[l, equal]\arrow[u, "W", shift left = 1ex, "\dashv"']
			\end{tikzcd}
		\end{equation*}
		in which both columns are obtained from our newly minted adjunction; in the left column, we merely lose injectivity of $\varepsilon_A: WU(A) \to A$ in general. Thus the strictness condition can be removed while retaining the adjunction. On the other hand, both perfectness of the residue ring $U(A)$ and $\pi$-adic completeness of $A$ are necessary to define $\varepsilon_A$. Thus neither of those two properties can be removed.
		
		Since we did not find any practical use for $\mathsf{crp}_{\mathcal{L}_{m,q}^{(t)}}$, we neglected to introduce it outside of this remark. However, it does explain why it is more natural to view $W$ as a left adjoint to $U$ in the adjoint equivalence of the right column, as opposed to viewing $W$ as a right adjoint to $U$.
	\end{remark}
	\section{Lazardian jet algebras}\label{Sec:LazardianJetAlgebras}
	The goal for this section is to construct an adjunction
	\begin{equation*}
		\begin{tikzcd}
			\mathsf{Alg}_{\mathcal{L}_{m,q}^{(t)}}\arrow[from = d, shift right = 1ex, "W"',dashed] \\
			\mathsf{Alg}_{\underline{\mathcal{L}_m}}\arrow[from = u, "\Delta"', shift right= 1ex, "\dashv",dashed]
		\end{tikzcd}
	\end{equation*}
	which completes the proof of Theorem~\ref{Thm:LazardianWittVectors}.
	
	The $\pi$-typical Witt vector functor $\mathsf{Alg}_{O_K} \to \mathsf{Alg}_{O_K}$ and the power series functor $\mathsf{Alg}_{\mathbf{Z}}\to \mathsf{Alg}_{\mathbf{Z}}$ both admit left adjoints. To construct the points of the functor in the former case, we may take quotients of the free $\delta_{\pi}$-ring on a set;  in the latter, we have the Hasse-Schmidt derivation algebra as in Vojta~\cite{Voj07}, alternatively known as the jet algebra~\cite{LM09,MP24}. Both of these constructions can be understood in the generality of plethories~\cite{BW05}. Using the polynomials defining the ring operations on $W^{(t)}_{m,q} = \mathbf{A}^{m + 1}_{\underline{\mathcal{L}_m}}$, in the language of Tall and Wraith~\cite{TW70}, the ring $\Delta = \underline{\mathcal{L}_m}[X_0,\dots,X_{m }]$ can be viewed as an $ \underline{\mathcal{L}_m}$-$\mathcal{L}_{m,q}^{(t)}$-biring. We are interested in the composition products $\Delta(A) = \Delta \odot A$. The results of this section can be understood entirely within this greater generality, but we will be more concrete. 
	
	Allowing the $\mathcal{L}_{m,q}^{(t)}$-algebra $A$ to vary, the biring $\Delta$ can be viewed as a functor which we will use to construct our desired adjunction.
	
	\begin{definition}\label{Defn:LazardianJetAlgebra}
		Let $A$ be an $\mathcal{L}^{(t)}_{m,q}$-algebra. Define the $A$-valued \emph{Lazardian jet algebra} by
		\begin{equation*}
			\Delta^{(t)}_{m,q}(A) := \underline{\mathcal{L}_m}[d^{[n]}x \mid x \in A, 0\leq n \leq m ]/{\sim},
		\end{equation*}
		where $\sim$ is the ideal generated by the relations
		\begin{equation*}
			d^{[n]}(x \ast y) = Q_{n,q}^{\ast(t)}(d^{[0]}x,\dots,d^{[n]}x;d^{[0]}y,\dots,d^{[n]}y)
		\end{equation*}
		for all $x,y \in A$ and $\ast$ representing either $+$ or $\times$, and
		\begin{equation*}
			d^{[n]}x = x_n:=\varepsilon_{\mathcal{L}_{m,q}^{(t)}}^{-1}(x)_n
		\end{equation*}
		for all $x \in \mathcal{L}_{m,q}^{(t)}$, where $\varepsilon: WU \to 1_{\mathsf{CRP}_{\mathcal{L}^{(t)}_{m,q}}}$ is the counit of the adjunction $W\dashv U$ as in Proposition~\ref{Prop:WUcounit}. 
	\end{definition}
	For notational brevity we will simply write $\Delta := \Delta^{(t)}_{m,q}$.
	\begin{example}
		Consider the case $t = 0$, $m = 1$, and $q = p$. In this case we write $dx := d^{[1]}x$ and $\omega := \omega_1$ so that $\overline{p/\pi}= \omega$. Note that 
		\begin{equation*}
			\Delta(A) = \mathbf{F}_p[\omega]^{\operatorname{pf}}[dx\mid x \in A]/{\sim}
		\end{equation*}
		with $\sim$ being determined by the additive relations
		\begin{equation*}
			d(x + y) = dx + dy + \omega^p C_p(x,y),
		\end{equation*}
		the multiplicative relations
		\begin{equation*}
			d(xy) = x^p\,dy + y^p\,dx,
		\end{equation*}
		and the relation $d\pi^n = \delta_{n1}$. In particular 
		\begin{equation*}
			dp = \pi^p\,d(p/\pi) + (p/\pi)^p \,d\pi = \omega^p. 
		\end{equation*}
		Thus the additive relation becomes
		\begin{equation*}
			d(x + y) = dx + dy  + C_p(x,y)\,dp.
		\end{equation*}
		These relations are used in the construction of total $p$-differentials~\cite{DKRZ19} and Frobenius-Witt differentials~\cite{Sai22b}. A slight variation of these relations arise when working with the object $\mathbf{\Omega}_A$ as in Gabber and Ramero~\cite[9.6.12]{GR18}. The corresponding additive relation for $\mathbf{\Omega}_A$ is
		\begin{equation}\label{Equation:GRunweightedVersion}
			d(x + y) = dx + dy + C_p(x^{1/p},y^{1/p})\,dp. 
		\end{equation}
		If we did not normalize the indeterminates when defining $Q_i^+$, then we would have instead obtained
		\begin{equation*}
			Q_{1,\text{unnormalized}}^+ = X_1 + Y_1 + (p/\pi)^pC_p(X_0^{1/p},Y_0^{1/p}),
		\end{equation*}
		yielding the relation (\ref{Equation:GRunweightedVersion}) from Gabber and Ramero. 
	\end{example}

	 We show that the construction of $\Delta(A)$ is functorial.
	\begin{proposition}\label{Prop:jetAlgebraFunctoriality}
		Let $\phi: A \to A'$ be an $\mathcal{L}_{m,q}^{(t)}$-algebra morphism. Define $\Delta(\phi): \Delta(A) \to \Delta(A')$ as the unique map making the diagram
		\begin{equation*}
			\begin{tikzcd}[column sep = 15ex]
				\underline{\mathcal{L}_m}[d^{[n]}x\mid x \in A, 0 \leq n \leq m]\arrow[r, "d^{[n]}x\mapsto d^{[n]}\phi(x)"]\arrow[d] & \underline{\mathcal{L}_m}[d^{[n]}x\mid x \in A', 0 \leq n \leq m]\arrow[d] \\
				\Delta(A)\arrow[r,dashed, "\Delta(\phi)"] & \Delta(A')
			\end{tikzcd}
		\end{equation*}
		commute. This data makes $\Delta: \mathsf{Alg}_{\mathcal{L}_{m,q}^{(t)}}\to \mathsf{Alg}_{\underline{\mathcal{L}_m}}$ a functor.
	\end{proposition}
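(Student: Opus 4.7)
The plan is to verify well-definedness of $\Delta(\phi)$ as the induced map on the quotient, and then check the two functor axioms on generators.

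First I would consider the evident $\underline{\mathcal{L}_m}$-algebra morphism
\begin{equation*}
\widetilde{\phi}: \underline{\mathcal{L}_m}[d^{[n]}x\mid x \in A, 0 \leq n \leq m] \to \Delta(A')
\end{equation*}
sending $d^{[n]}x \mapsto d^{[n]}\phi(x)$ on generators, obtained by the universal property of the free polynomial algebra composed with the defining quotient map of $\Delta(A')$. To produce the dashed $\Delta(\phi)$, I need $\widetilde{\phi}$ to annihilate the ideal $\sim$ defining $\Delta(A)$, i.e.\ to send every defining relation listed in Definition~\ref{Defn:LazardianJetAlgebra} to zero in $\Delta(A')$.

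Second, I would check the two families of relations in turn. For the arithmetic relation $d^{[n]}(x\ast y) - Q_{n,q}^{\ast(t)}(d^{[0]}x,\dots,d^{[n]}x;d^{[0]}y,\dots,d^{[n]}y)$, since $\phi$ is a ring morphism we have $\phi(x\ast y) = \phi(x)\ast\phi(y)$, so the relation maps to $d^{[n]}(\phi(x)\ast\phi(y)) - Q_{n,q}^{\ast(t)}(d^{[0]}\phi(x),\dots;d^{[0]}\phi(y),\dots)$, which is one of the defining relations of $\Delta(A')$, hence zero. For the coefficient relation $d^{[n]}x - x_n$ with $x \in \mathcal{L}_{m,q}^{(t)}$, the fact that $\phi$ is an $\mathcal{L}_{m,q}^{(t)}$-algebra morphism means the image of $x$ under the structure map of $A$ is sent by $\phi$ to the image of $x$ under the structure map of $A'$; the element $x_n \in \underline{\mathcal{L}_m}$ depends only on $x \in \mathcal{L}_{m,q}^{(t)}$ and not on the ambient algebra, so the relation maps to $d^{[n]}\phi(x) - x_n$, which again is a defining relation of $\Delta(A')$. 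This produces the unique $\Delta(\phi): \Delta(A) \to \Delta(A')$ making the square commute.

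Finally, the functoriality axioms are immediate from uniqueness: $\Delta(\mathrm{id}_A)$ and $\mathrm{id}_{\Delta(A)}$ both satisfy the universal property characterizing the dashed arrow in the diagram for $\phi = \mathrm{id}_A$ (both fix every generator $d^{[n]}x$), so they coincide; similarly, for composable $\phi: A \to A'$ and $\psi: A' \to A''$, the two maps $\Delta(\psi)\circ\Delta(\phi)$ and $\Delta(\psi\circ\phi)$ both send $d^{[n]}x$ to $d^{[n]}\psi(\phi(x))$ on generators, so they agree on all of $\Delta(A)$. I expect no real obstacle: the only step requiring care is making sure that the coefficient-type relation $d^{[n]}x = x_n$, whose right-hand side is defined via $\varepsilon^{-1}$ inside $\mathcal{L}_{m,q}^{(t)}$, does not depend on the algebra $A$, which is clear since the Witt coordinates of $x \in \mathcal{L}_{m,q}^{(t)}$ are intrinsic to $\mathcal{L}_{m,q}^{(t)}$.
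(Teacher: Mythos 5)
Your proposal is correct and follows essentially the same route as the paper: check that the generator-wise map kills both families of defining relations (the arithmetic relations via $\phi(x\ast y)=\phi(x)\ast\phi(y)$ and the coefficients of $Q_{n,q}^{\ast(t)}$ lying in $\underline{\mathcal{L}_m}$, and the relations $d^{[n]}x = x_n$ via $\phi$ being an $\mathcal{L}_{m,q}^{(t)}$-algebra map), then deduce functoriality from the universal property of the quotient. Your extra remark that $x_n$ is intrinsic to $\mathcal{L}_{m,q}^{(t)}$ is a fine clarification but not a departure from the paper's argument.
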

	\begin{proof}
		We have $d^{[n]}\phi(x) = d^{[n]}x = x_n$ for $x \in \mathcal{L}_{m,q}^{(t)}$ because $\phi$ is an $\mathcal{L}^{(t)}_{m,q}$-algebra map. We have
		\begin{equation*}
			d^{[n]}(\phi(x\ast y))  = Q^{\ast(t)}_{n,q}(d^{[i]}\phi(x);d^{[i]}\phi(y)) = \Delta(\phi)(Q_{n,q}^{\ast(t)}(d^{[i]}x;d^{[i]}y))
		\end{equation*}
		because $\phi(x\ast y) = \phi(x)\ast \phi(y)$ and because $Q_{n,q}^{\ast(t)}$ has coefficients in $\underline{\mathcal{L}_m}$. Functoriality follows from the universality of quotients.
	\end{proof}
	
	We construct the counit of the adjunction.
	\begin{proposition}
		Let $k$ be an $\underline{\mathcal{L}_m}$-algebra. Define $\varepsilon'_k: \Delta W(k) \to k$ as the unique map making the diagram
		\begin{equation*}
			\begin{tikzcd}
				\underline{\mathcal{L}_m}[d^{[n]}x \mid x \in W(k), 0 \leq n \leq m] \arrow[rd, "d^{[n]}(x_i)_i \mapsto x_n"]\arrow[d]& \\
				\Delta W(k) \arrow[r,dashed, "\varepsilon'_k"]& k
			\end{tikzcd}
		\end{equation*}
		commute. These maps define a natural transformation 
		\begin{equation*}
			\varepsilon': \Delta W \to 1_{\mathsf{Alg}_{\underline{\mathcal{L}_m}}}. 
		\end{equation*}
	\end{proposition}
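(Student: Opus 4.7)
The plan is to verify three points in order: (a) the assignment $d^{[n]}(x_i)_i \mapsto x_n$ on generators descends through the defining relations of $\Delta W(k)$; (b) the resulting map is an $\underline{\mathcal{L}_m}$-algebra morphism; (c) the maps $\varepsilon'_k$ are natural in $k$. Only (a) requires any real argument.

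Lifted to the free polynomial algebra $\underline{\mathcal{L}_m}[d^{[n]}(x_i)_i\mid (x_i)_i \in W(k), 0\leq n \leq m]$ the assignment is tautologically an $\underline{\mathcal{L}_m}$-algebra map, so I only need to check it kills both families of relations from Definition~\ref{Defn:LazardianJetAlgebra}. The arithmetic relations are immediate: by the very definition of the ring operations on $W(k)$ given in Theorem~\ref{Theorem:LazardianWittVectorConstruction}, the $n$th coordinate of $(x_i)_i \ast (y_i)_i$ is $Q_{n,q}^{\ast(t)}(x_0,\dots,x_n;y_0,\dots,y_n)$, which is exactly the image of the right-hand side of the relation under the prospective map.

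The delicate point is the structural relations $d^{[n]}x = \varepsilon_{\mathcal{L}_{m,q}^{(t)}}^{-1}(x)_n$ for $x \in \mathcal{L}_{m,q}^{(t)}$, where $x$ on the left is now read inside $\Delta W(k)$ via the structure map $\mathcal{L}_{m,q}^{(t)} \to W(k)$, and the element $\varepsilon_{\mathcal{L}_{m,q}^{(t)}}^{-1}(x)_n \in \underline{\mathcal{L}_m}$ on the right enters through $\underline{\mathcal{L}_m} \hookrightarrow \Delta W(k)$. Here I would use that the $\mathcal{L}_{m,q}^{(t)}$-algebra structure on $W(k)$ constructed in the proof of Theorem~\ref{Theorem:LazardianWittVectorConstruction} is pulled back from the universal object $W(\underline{\mathcal{L}_m}) \cong \mathcal{L}_{m,q}^{(t)}$, so the structure map factors as
\begin{equation*}
    \mathcal{L}_{m,q}^{(t)} \xrightarrow{\varepsilon_{\mathcal{L}_{m,q}^{(t)}}^{-1}} W(\underline{\mathcal{L}_m}) \xrightarrow{W(\underline{\mathcal{L}_m}\to k)} W(k).
\end{equation*}
Since $W$ acts coordinate-wise on morphisms, the $n$th coordinate of the image of $x$ in $W(k)$ is the image in $k$ of $\varepsilon_{\mathcal{L}_{m,q}^{(t)}}^{-1}(x)_n$, matching exactly what the right-hand side of the relation maps to. This is the step I expect to be the main obstacle, purely because one has to unwind the definition of the $\mathcal{L}_{m,q}^{(t)}$-structure on $W(k)$ to see the factorization.

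Once (a) is established, (b) is automatic, since the well-defined quotient inherits its $\underline{\mathcal{L}_m}$-algebra structure from the polynomial algebra on which the map was initially defined. For (c), given $\phi: k \to k'$ in $\mathsf{Alg}_{\underline{\mathcal{L}_m}}$, the map $W(\phi)$ is coordinate-wise, so $\Delta W(\phi)(d^{[n]}(x_i)_i) = d^{[n]}(\phi(x_i))_i$; applying $\varepsilon'_{k'}$ yields $\phi(x_n) = \phi(\varepsilon'_k(d^{[n]}(x_i)_i))$, so the naturality square commutes on generators and hence everywhere.
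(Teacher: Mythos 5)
Your proposal is correct and follows essentially the same route as the paper: check that the generator assignment kills the arithmetic relations (immediate from the definition of the operations on $W(k)$) and the structural relations, then get naturality from the coordinatewise action of $W$ on morphisms. Your unwinding of the structure map $\mathcal{L}_{m,q}^{(t)} \to W(\underline{\mathcal{L}_m}) \to W(k)$ just makes explicit the step the paper dispatches with ``$\varepsilon'_k(d^{[n]}x) = x_n = \varepsilon'_k(x_n)$ since $k$ is an $\underline{\mathcal{L}_m}$-algebra.''
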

	\begin{proof}
		We have $\varepsilon'_k(d^{[n]}x) = x_n = \varepsilon'_k(x_n) $ for all $x \in \mathcal{L}_{m,q}^{(t)}$ since $k$ is an $\underline{\mathcal{L}_m}$-algebra. We have
		\begin{equation*}
			\varepsilon'_k(d^{[n]}(x\ast y)) = Q^{\ast(t)}_{n,q}(x_i;y_i) = \varepsilon'_kQ_{n,q}^{\ast(t)}(d^{[i]}x;d^{[i]}y)
		\end{equation*}
		since $Q_{n,q}^{\ast(t)}$ has coefficients in $\underline{\mathcal{L}_m}$. Thus we obtain our factorization $\Delta W(k) \to k$. Naturality follows from the fact that $W(\phi)$ applies $\phi$ to each coordinate. 
	\end{proof}
	We construct the unit of the adjunction.
	\begin{proposition}
		Let $A$ be an $\mathcal{L}_{m,q}^{(t)}$-algebra. The function $\eta'_A: A \to W\Delta(A)$ defined by $a \mapsto (d^{[n]}a)_n$ is an $\mathcal{L}_{m,q}^{(t)}$-algebra morphism natural in $A$, thus yielding a natural transformation \begin{equation*}
			\eta': 1_{\mathsf{Alg}_{\mathcal{L}_{m,q}^{(t)}}} \to W\Delta. 
		\end{equation*}
	\end{proposition}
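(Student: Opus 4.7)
The plan is to verify directly from Definition~\ref{Defn:LazardianJetAlgebra} the three properties needed: additivity/multiplicativity of $\eta'_A$, compatibility with the $\mathcal{L}_{m,q}^{(t)}$-algebra structure, and naturality in $A$. The defining relations of $\Delta(A)$ were engineered so that each of these falls out almost by inspection.

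First I would check additivity and multiplicativity. For $a,b \in A$, the definition of the ring operations on $W(\Delta(A))$ in terms of the arithmetic polynomials (Theorem~\ref{Theorem:LazardianWittVectorConstruction}) yields
\begin{equation*}
\eta'_A(a) \ast \eta'_A(b) = \bigl(Q_{n,q}^{\ast(t)}(d^{[0]}a,\dots,d^{[n]}a;\, d^{[0]}b,\dots,d^{[n]}b)\bigr)_n
\end{equation*}
for $\ast \in \{+,\times\}$. By the first family of relations imposed in Definition~\ref{Defn:LazardianJetAlgebra}, the right-hand side equals $(d^{[n]}(a\ast b))_n = \eta'_A(a\ast b)$.

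Next I would verify compatibility with the $\mathcal{L}_{m,q}^{(t)}$-algebra structure. For $x \in \mathcal{L}_{m,q}^{(t)}$, write $(x_n)_n := \varepsilon_{\mathcal{L}_{m,q}^{(t)}}^{-1}(x) \in W(\underline{\mathcal{L}_m})$, as in the second clause of Definition~\ref{Defn:LazardianJetAlgebra}. The structure map $\mathcal{L}_{m,q}^{(t)} \to W(\Delta(A))$ factors through the functorial map $W(\underline{\mathcal{L}_m}) \to W(\Delta(A))$ and hence sends $x \mapsto (x_n)_n$ coordinatewise in $\Delta(A)$. On the other hand, $\eta'_A(\sigma_A(x)) = (d^{[n]}\sigma_A(x))_n = (x_n)_n$ by the second defining relation, so the two agree.

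Finally, for naturality, an $\mathcal{L}_{m,q}^{(t)}$-morphism $\phi: A \to A'$ induces $W(\Delta(\phi))$ which acts coordinatewise as $\Delta(\phi)$; Proposition~\ref{Prop:jetAlgebraFunctoriality} gives $\Delta(\phi)(d^{[n]}a) = d^{[n]}\phi(a)$, so $W(\Delta(\phi))(\eta'_A(a)) = \eta'_{A'}(\phi(a))$. The main (and only) subtlety is in step two, where one must identify the $\varepsilon^{-1}$-coordinates of an element of $\mathcal{L}_{m,q}^{(t)}$ with the symbols $d^{[n]}x$ forced by the defining relations; beyond this, the verification is pure unwinding.
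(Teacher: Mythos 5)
Your proof is correct and follows essentially the same route as the paper: compatibility with $+$ and $\times$ via the first family of relations and the arithmetic-polynomial definition of the ring operations on $W$, compatibility with the structure map via the relation $d^{[n]}x = x_n$ and the identification of the structure map of $W(\Delta(A))$ with $x\mapsto(x_n)_n$, and naturality from $\Delta(\phi)(d^{[n]}a)=d^{[n]}\phi(a)$ together with $W$ acting coordinatewise. You merely spell out the structure-map step (factoring through $W(\underline{\mathcal{L}_m})$ via $\varepsilon^{-1}$) a bit more explicitly than the paper's compressed "$(d^{[n]}x)_n=(x_n)_n=x$", which is fine.
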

	\begin{proof}
		We calculate $x \mapsto (d^{[n]}x)_n = (x_n)_n = x$ for all $x \in \mathcal{L}_{m,q}^{(t)}$. We calculate
		\begin{equation*}
			a \ast b \mapsto (d^{[n]}(a\ast b))_n = (Q_{n,q}^{\ast (t)}(d^{[i]}a;d^{[i]}b))_n = (d^{[n]}a)_n\ast (d^{[n]}b)_n.
		\end{equation*}
		Thus $\eta'_A(a\ast b) = \eta'_A(a)\ast \eta'_A(b)$. Naturality follows from $d^{[n]}\phi(a) = \Delta(\phi)(d^{[n]}a)$ and the fact that $W\Delta(\phi)$ is applied on each coordinate.
	\end{proof}
	Finally, we verify the triangle identities.
	\begin{proposition}
		For all $\mathcal{L}_{m,q}^{(t)}$-algebras $A$ and $\underline{\mathcal{L}_m}$-algebras $k$, the diagrams 
		\begin{equation*}
			\begin{tikzcd}[column sep = 10ex]
				\Delta(A) \arrow[r, "\Delta(\eta'_A)"]\arrow[rd, "\operatorname{id}_{\Delta(A)}"']& \Delta W\Delta (A)\arrow[d, "\varepsilon'_{\Delta(A)}"] \\
				& \Delta(A) 
			\end{tikzcd}\quad 
			\begin{tikzcd}[column sep = 10ex]
				W(k) \arrow[r, "\eta'_{W(k)}"]\arrow[rd, "\operatorname{id}_{W(k)}"']& W\Delta W(k)\arrow[d, "W(\varepsilon'_k)"] \\
				& W(k) 
			\end{tikzcd}
		\end{equation*}
		commute.
	\end{proposition}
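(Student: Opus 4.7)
The plan is to verify each triangle identity by evaluating on a convenient generating set and showing that both sides of each triangle agree on that set; since all composite maps involved are morphisms in $\mathsf{Alg}_{\underline{\mathcal{L}_m}}$ or $\mathsf{Alg}_{\mathcal{L}_{m,q}^{(t)}}$, equality on generators suffices.

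First, for the left triangle, I would use that the $\underline{\mathcal{L}_m}$-algebra $\Delta(A)$ is generated by the symbols $d^{[n]}x$ for $x \in A$ and $0 \leq n \leq m$, so it suffices to trace such a generator through the composite $\varepsilon'_{\Delta(A)} \circ \Delta(\eta'_A)$. By the definition of $\Delta$ on morphisms in Proposition~\ref{Prop:jetAlgebraFunctoriality}, the map $\Delta(\eta'_A)$ sends $d^{[n]}x$ to $d^{[n]}\eta'_A(x) = d^{[n]}(d^{[i]}x)_i$, where $(d^{[i]}x)_i$ is now regarded as an element of $W\Delta(A)$. Applying $\varepsilon'_{\Delta(A)}$, which by construction sends $d^{[n]}(y_i)_i \mapsto y_n$, gives the $n$th coordinate $d^{[n]}x$. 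Hence the composite fixes every generator, so it equals the identity on $\Delta(A)$.

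Next, for the right triangle, I would simply evaluate $W(\varepsilon'_k) \circ \eta'_{W(k)}$ on an arbitrary Witt vector $(x_i)_i \in W(k)$. By the definition of $\eta'$, this is sent first to the Witt vector $(d^{[n]}(x_i)_i)_n \in W\Delta W(k)$. Since $W$ acts on morphisms coordinatewise, $W(\varepsilon'_k)$ then sends this to $(\varepsilon'_k(d^{[n]}(x_i)_i))_n$, which by the definition of $\varepsilon'_k$ equals $(x_n)_n = (x_i)_i$.

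There is no real obstacle: both triangles are essentially forced by the fact that $\eta'$ and $\varepsilon'$ were defined as the tautological maps $a \mapsto (d^{[n]}a)_n$ and $d^{[n]}(y_i)_i \mapsto y_n$. The only technical point worth articulating is that for the left triangle it is legitimate to verify the identity only on the generators $d^{[n]}x$, and this is justified because both $\Delta(\eta'_A)$ and $\varepsilon'_{\Delta(A)}$, being the unique algebra maps determined by the universal properties of the respective quotient and polynomial algebra presentations, are $\underline{\mathcal{L}_m}$-algebra morphisms whose composite is therefore determined by its values on generators.
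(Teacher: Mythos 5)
Your proposal is correct and follows essentially the same argument as the paper: both triangles are verified by tracing a generator $d^{[n]}x$ (left) or an arbitrary element of $W(k)$ (right) through the composites, using the tautological definitions of $\eta'$ and $\varepsilon'$. Your extra remark justifying the reduction to generators for the left triangle is a fine (if implicit in the paper) clarification, but the substance matches the paper's proof.
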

	\begin{proof}
		The composition of maps in the left diagram is determined by
		\begin{equation*}
			d^{[n]}x \mapsto d^{[n]}(d^{[i]}x)_i \mapsto d^{[n]}x
		\end{equation*}
		and is thus the identity on $\Delta(A)$. 
		
		The composition of maps in the right diagram is given by
		\begin{equation*}
			x \mapsto (d^{[n]}x)_n \mapsto (x_n)_n = x
		\end{equation*}
		and is thus the identity on $W(k)$.
	\end{proof}
	This completes the proof of Theorem~\ref{Thm:LazardianWittVectors}.
	\section{Universal residual perfection for Lazardian $O$-algebras}\label{Sec:ThreeFunctors1}
	We prove Theorem~\ref{Thm:LazardianURP}. Let $O$ be an object of $\mathsf{CRP}_{\mathcal{L}^{(t)}_{m,q}}$, and let $A$ be a Lazardian $O$-algebra, which we recall is a strict $\pi$-adically complete $O$-algebra. We assume the data of the diagram
	\begin{equation*}
		\begin{tikzcd}
			\mathsf{Alg}_{O}\arrow[from = d, shift right = 1ex, "W"'] &\mathsf{CRP}_{O}\arrow[l,hook']\arrow[ d, shift left = 1ex, "U"]\\
			\mathsf{Alg}_{\underline{O}}\arrow[from = u, "\Delta"', shift right= 1ex, "\dashv"]&\mathsf{PerfAlg}_{\underline{O}}\arrow[l,hook']\arrow[u, "W", shift left = 1ex, "\dashv"']
		\end{tikzcd}
	\end{equation*}
	and denote the two adjunctions by $(\Delta,W,\eta',\varepsilon')$ and $(W,U,\eta,\varepsilon)$. We also assume that $\eta$ and $\varepsilon$ are natural isomorphisms. 
	
	We first show that restriction of scalars $\mathsf{Alg}_A \to \mathsf{Alg}_O$ induces a factorization $\mathsf{CRP}_A \to \mathsf{CRP}_O$. 
	\begin{proposition}
		Let $A$ be a Lazardian $O$-algebra. An $A$-algebra $B$ is an object of $\mathsf{CRP}_A$ if and only if it is an object of $\mathsf{CRP}_{O}$. In other words, restriction of scalars $\mathsf{Alg}_A \to \mathsf{Alg}_O$ fits into the diagram
		\begin{equation*}
			\begin{tikzcd}
				&&\mathsf{Alg}_A\arrow[lld]\\
				\mathsf{Alg}_{O} &\mathsf{CRP}_{O}\arrow[l,hook']& \mathsf{CRP}_A.\arrow[l,dashed]\arrow[u,hook]
			\end{tikzcd}
		\end{equation*}
	\end{proposition}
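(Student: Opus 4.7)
The plan is to unpack both membership conditions and observe they coincide term by term. First I would note that the ideals $\pi A \cdot B$ and $\pi O \cdot B$ both equal $\pi B$, so conditions (i) and (ii) of Definition~\ref{Def:CRP}---namely $\pi B$-adic completeness of $B$ and perfectness of $B/\pi B$ as an $\mathbf{F}_p$-algebra---are literally the same statement whether $B$ is regarded as an $A$-algebra or as an $O$-algebra. What remains is to compare the two instances of the strictness condition (iii).

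Next I would rephrase strictness for a principal ideal in annihilator-theoretic terms. For any ring $R$ with principal ideal $I = \pi R$ satisfying $\operatorname{Ann}_R(\pi) = \pi^m R$, tensoring the short exact sequence $0 \to \pi^m R \to R \xrightarrow{\cdot \pi} \pi R \to 0$ with $B$ over $R$ and computing the kernel of $\pi R \otimes_R B \to B$ shows that injectivity of this map is equivalent to the equality $\operatorname{Ann}_B(\pi) = \pi^m B$. Applying this observation to $R = A$ and to $R = O$ separately reduces membership in $\mathsf{CRP}_A$ to the identity $\operatorname{Ann}_B(\pi) = \pi^{m_A} B$, and membership in $\mathsf{CRP}_O$ to the identity $\operatorname{Ann}_B(\pi) = \pi^{m_O} B$, where $m_A$ and $m_O$ are the exponents supplied by Definition~\ref{Def:CRP} for the pairs $(A,\pi A)$ and $(O,\pi O)$.

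Finally, I would invoke the hypothesis that $A$ is itself a Lazardian $O$-algebra, i.e., strict over $O$. Applying the previous reformulation to $R = O$ and $B = A$ yields $\operatorname{Ann}_A(\pi) = \pi^{m_O} A$, and hence $m_A = m_O$. Therefore the two strictness conditions on $B$ are the same, proving the ``if and only if''. The sole mildly technical step is the exact-sequence computation identifying injectivity of $I \otimes_R B \to B$ with the annihilator identity $\operatorname{Ann}_B(\pi) = \operatorname{Ann}_R(\pi) B$; this is standard commutative algebra and constitutes the only obstacle, which is minor.
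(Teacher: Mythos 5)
Your proof is correct, but it reaches the strictness comparison by a different mechanism than the paper. The paper argues directly with the tensor products: strictness of $A$ over $O$ gives an isomorphism $\pi O \otimes_O A \xrightarrow{\sim} \pi A$, which after base change along $A \to B$ identifies $\pi O \otimes_O B$ with $\pi A \otimes_A B$ compatibly with the two multiplication maps to $B$, so one is injective if and only if the other is. You instead convert strictness into an intrinsic identity on $B$: tensoring $0 \to \operatorname{Ann}_R(\pi) \to R \xrightarrow{\cdot \pi} \pi R \to 0$ with $B$ identifies $\pi R \otimes_R B$ with $B/\operatorname{Ann}_R(\pi)B$ and shows that injectivity of $\pi R \otimes_R B \to B$ is equivalent to $\operatorname{Ann}_B(\pi) = \operatorname{Ann}_R(\pi)B$; applying this to $R = O$ and $R = A$, the two conditions on $B$ agree because strictness of $A$ over $O$ gives $\operatorname{Ann}_A(\pi) = \pi^{m_O}A$. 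Your route has the small bonus of making explicit that the pair $(A,\pi A)$ satisfies the annihilator hypothesis of Definition~\ref{Def:CRP} (one may take $m_A = m_O$), so that $\mathsf{CRP}_A$ is genuinely defined, and it gives a reusable element-level criterion for strictness; the paper's route is shorter and never needs the annihilator hypothesis. One cosmetic imprecision: from $\operatorname{Ann}_A(\pi) = \pi^{m_O}A = \pi^{m_A}A$ you cannot literally conclude the equality of exponents $m_A = m_O$, only the equality of ideals $\pi^{m_A}A = \pi^{m_O}A$; but that is all your argument actually uses, since it yields $\pi^{m_A}B = \pi^{m_O}B$, so the conclusion is unaffected.
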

	\begin{proof}
		Both $\pi$-adic completeness and residual perfectness are independent of the base. It remains to verify the strictness condition. Since $A$ is strict, we have an isomorphism $\pi O\otimes_{O} A \xrightarrow{\sim}\pi A$, and base change gives an isomorphism 
		\begin{equation*}
			\pi O\otimes_{O} B \simeq\pi O\otimes_{O} A\otimes_A B \xrightarrow{\sim}\pi A\otimes_A B.
		\end{equation*}
		Since we have a commuting diagram
		\begin{equation*}
			\begin{tikzcd}
				\pi O\otimes_{O}B\arrow[r]\arrow[d,"\sim"'] & B, \\
				\pi A\otimes_A B\arrow[ru]
			\end{tikzcd}
		\end{equation*}
		the top arrow is injective if and only if the bottom arrow is injective.
	\end{proof}
	
	We now define the factorization 
	\begin{equation*}
		\begin{tikzcd}
			\mathsf{CRP}_{O}\arrow[ d, shift left = 1ex, "U"]&\mathsf{CRP}_A \arrow[d, shift left = 1ex, "U", dashed] \arrow[l]\\
			\mathsf{PerfAlg}_{\underline{O}} \arrow[u, "W", shift left = 1ex, "\dashv"']&\mathsf{PerfAlg}_{\Delta(A)^{\operatorname{pf}}}\arrow[u, "W", shift left = 1ex, "\dashv"',dashed]\arrow[l]
		\end{tikzcd}
	\end{equation*}
	of $W$ and $U$ on objects.
	\begin{definition}\label{Definition:Delta(A)pfStructure}
		Let $A$ be a Lazardian $O$-algebra. For each object $B$ of $\mathsf{CRP}_A$, equip the object $U(B)$ of $\mathsf{PerfAlg}_{\underline{O}}$ with the unique $\Delta(A)^{\operatorname{pf}}$-algebra structure making the diagram
		\begin{equation*}
			\begin{tikzcd}
				\Delta W(\underline{O})\arrow[rrrr]\arrow[rd,"\varepsilon'_{\underline{O}}"]\arrow[dddd, "\Delta(\varepsilon_{O})"']& & && \Delta W U (B)\arrow[ld, "\varepsilon'_{U(B)}"']\arrow[dddd,"\sim"', "\Delta(\varepsilon_B)"]\\
				 & \underline{O}\arrow[rr] & &U(B)\\
				 &&\Delta(A)^{\operatorname{pf}}\arrow[ru,dashed, "\exists !"]\\
				 &&\Delta(A)\arrow[u]\arrow[rrd]\\
				 \Delta(O) \arrow[rrrr]\arrow[rru]&&&& \Delta(B)
			\end{tikzcd}
		\end{equation*}
		commute. For each object $k$ of $\mathsf{PerfAlg}_{\Delta(A)^{\operatorname{pf}}}$, equip $W(k)$ with the $A$-algebra structure $A \to W\Delta(A) \to  W(\Delta(A)^{\operatorname{pf}})\to W(k)$.
	\end{definition}
	We only needed the right trapezium of the diagram in Definition~\ref{Definition:Delta(A)pfStructure} to define $\Delta(A)^{\operatorname{pf}}\to U(B)$, but we included the whole diagram to provide some context.
	
	Having defined the factorization of $W$ and $U$ on objects, we now require $W$ and $U$ to be compatible with morphisms.
	
	\begin{proposition}
		The functor $W: \mathsf{PerfAlg}_{\Delta(A)^{\operatorname{pf}}} \to \mathsf{CRP}_{O}$ factors through $\mathsf{CRP}_A$.
	\end{proposition}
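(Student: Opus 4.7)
The plan is to carry out two verifications, each of which is essentially bookkeeping once the $A$-algebra structure of Definition~\ref{Definition:Delta(A)pfStructure} is in place.

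First, I would verify that each $W(k)$, equipped with the composition $A \xrightarrow{\eta'_A} W\Delta(A) \to W(\Delta(A)^{\operatorname{pf}}) \to W(k)$, lies in $\mathsf{CRP}_A$. The preceding proposition on restriction of scalars reduces this to two subclaims: that $W(k) \in \mathsf{CRP}_O$, and that the new $A$-algebra structure on $W(k)$ is compatible with the already-given $O$-algebra structure, i.e.\ restricts to it along $O \to A$. The former holds by hypothesis on the functor $W: \mathsf{PerfAlg}_{\underline{O}} \to \mathsf{CRP}_O$. The latter holds because each of the three maps in the composition is already an $O$-algebra morphism: $\eta'_A$ is the unit of the adjunction $\Delta \dashv W$ taken between $\mathsf{Alg}_O$ and $\mathsf{Alg}_{\underline{O}}$, and the remaining two maps are images under the functor $W: \mathsf{Alg}_{\underline{O}} \to \mathsf{Alg}_O$ of $\underline{O}$-algebra morphisms.

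Next, I would verify that each morphism $\phi: k \to k'$ in $\mathsf{PerfAlg}_{\Delta(A)^{\operatorname{pf}}}$ is sent by $W$ to an $A$-algebra morphism. Since $\phi$ is a $\Delta(A)^{\operatorname{pf}}$-algebra morphism, the triangle $\Delta(A)^{\operatorname{pf}} \to k \xrightarrow{\phi} k'$ commutes over the structure map $\Delta(A)^{\operatorname{pf}} \to k'$. Applying the functor $W$ gives a commutative triangle over $W(\Delta(A)^{\operatorname{pf}})$, and precomposition with the map $A \to W\Delta(A) \to W(\Delta(A)^{\operatorname{pf}})$ yields commutativity over $A$, which is precisely the assertion that $W(\phi)$ is an $A$-algebra morphism.

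I do not anticipate any genuine obstacle here: the content is the observation that each of the three maps composing the $A$-algebra structure on $W(k)$ is an $O$-algebra morphism, and that functoriality of $W$ transports the $\Delta(A)^{\operatorname{pf}}$-linearity of $\phi$ to $A$-linearity of $W(\phi)$.
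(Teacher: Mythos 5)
Your argument is correct and follows essentially the same route as the paper: the paper's own proof consists precisely of your second step, namely that $W$ sends a $\Delta(A)^{\operatorname{pf}}$-algebra morphism $k \to l$ to a $W(\Delta(A)^{\operatorname{pf}})$-algebra morphism, hence an $A$-algebra morphism via $A \to W(\Delta(A)^{\operatorname{pf}})$. Your first step (that $W(k)$ with the structure of Definition~\ref{Definition:Delta(A)pfStructure} lies in $\mathsf{CRP}_A$, using the restriction-of-scalars proposition and the compatibility of the restricted $O$-structure with the given one) is left implicit in the paper, so you have simply made explicit a check the paper takes for granted.
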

	\begin{proof}
		If $k \to l$ is a morphism of $\mathsf{PerfAlg}_{\Delta(A)^{\operatorname{pf}}}$, then $W(k) \to W(l)$ is a $W(\Delta(A)^{\operatorname{pf}})$-algebra morphism and is thus an $A$-algebra morphism via $A \to W(\Delta(A)^{\operatorname{pf}})$.
	\end{proof}
	
	\begin{proposition}
		The functor $U: \mathsf{CRP}_A \to \mathsf{PerfAlg}_{\underline{O}}$ factors through $\mathsf{PerfAlg}_{\Delta(A)^{\operatorname{pf}}}$. 
	\end{proposition}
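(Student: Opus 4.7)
The plan is to show that, for any morphism $\phi: B \to B'$ in $\mathsf{CRP}_A$, the $\underline{O}$-algebra morphism $U(\phi): U(B) \to U(B')$ is in fact $\Delta(A)^{\operatorname{pf}}$-linear with respect to the structures of Definition~\ref{Definition:Delta(A)pfStructure}. Since $U(B')$ is a perfect $\mathbf{F}_p$-algebra, any $\underline{O}$-algebra map out of $\Delta(A)$ extends uniquely to $\Delta(A)^{\operatorname{pf}}$, so it suffices to check that the two compositions $\Delta(A) \to U(B) \xrightarrow{U(\phi)} U(B')$ and $\Delta(A) \to U(B')$ agree.

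Unpacking the right trapezium of Definition~\ref{Definition:Delta(A)pfStructure}, the structure map $\Delta(A) \to U(B)$ decomposes as
\begin{equation*}
\Delta(A) \xrightarrow{\Delta(A \to B)} \Delta(B) \xrightarrow{\Delta(\varepsilon_B)^{-1}} \Delta W U(B) \xrightarrow{\varepsilon'_{U(B)}} U(B),
\end{equation*}
where invertibility of $\Delta(\varepsilon_B)$ uses the standing hypothesis that $\varepsilon: WU \to 1_{\mathsf{CRP}_O}$ is a natural isomorphism. The diagram chase then has three ingredients: functoriality of $\Delta$ applied to the $A$-algebra map $\phi$ yields $\Delta(\phi) \circ \Delta(A \to B) = \Delta(A \to B')$; naturality of $\varepsilon$, combined with functoriality of $\Delta$, yields $\Delta(\phi) \circ \Delta(\varepsilon_B)^{-1} = \Delta(\varepsilon_{B'})^{-1} \circ \Delta W U(\phi)$; and naturality of $\varepsilon'$ yields $U(\phi) \circ \varepsilon'_{U(B)} = \varepsilon'_{U(B')} \circ \Delta W U(\phi)$. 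Pasting these three identities produces the required equality of compositions $\Delta(A) \to U(B')$, and extending uniquely along $\Delta(A) \to \Delta(A)^{\operatorname{pf}}$ gives the desired $\Delta(A)^{\operatorname{pf}}$-linearity of $U(\phi)$.

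No serious obstacle is anticipated; the argument is purely formal, amounting to the juxtaposition of three naturality squares together with the universal property of perfection. The only subtlety worth flagging is the essential use of the hypothesis that $\varepsilon$ is a natural isomorphism: this is precisely what made Definition~\ref{Definition:Delta(A)pfStructure} well-posed, and it is exactly what permits one to invert $\Delta(\varepsilon_B)$ and $\Delta(\varepsilon_{B'})$ in the chase above.
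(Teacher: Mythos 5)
Your proposal is correct and is essentially the paper's own argument: the paper draws one large diagram and likewise reduces everything to the commutativity of the triangle out of $\Delta(A)^{\operatorname{pf}}$, which it settles by noting that the two maps are equalized on $\Delta(A)$ and that $\Delta(A)^{\operatorname{pf}}$ is initial in $\mathsf{PerfAlg}_{\Delta(A)}$ — exactly your "extend uniquely along $\Delta(A)\to\Delta(A)^{\operatorname{pf}}$" step, with the same three commuting pieces (functoriality of $\Delta$, naturality of $\varepsilon$, naturality of $\varepsilon'$). The only blemish is that your second identity is written with sources and targets swapped; it should read $\Delta W U(\phi)\circ\Delta(\varepsilon_B)^{-1}=\Delta(\varepsilon_{B'})^{-1}\circ\Delta(\phi)$, and with that correction the chase goes through verbatim.
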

	\begin{proof}
		Let $B \to C$ be a morphism of $\mathsf{CRP}_A$. Consider the diagram
		\begin{equation*}
			\begin{tikzcd}
				\Delta WU(B)\arrow[rrrr]\arrow[rd,"\varepsilon'_{U(B)}"]\arrow[dddd,"\sim", "\Delta(\varepsilon_B)"']& & && \Delta W U (C)\arrow[ld, "\varepsilon'_{U(C)}"']\arrow[dddd,"\sim"', "\Delta(\varepsilon_C)"]\\
				& U(B)\arrow[rr] & &U(C)\\
				&&\Delta(A)^{\operatorname{pf}}\arrow[ru]\arrow[lu]\\
				&&\Delta(A)\arrow[u]\arrow[rrd]\arrow[lld]\\
				\Delta(B) \arrow[rrrr]&&&& \Delta(C)
			\end{tikzcd}
		\end{equation*}
		in which only commutativity of the upper triangle is in question. Commutativity of this triangle follows from $\Delta(A)^{\operatorname{pf}}$ being initial in $\mathsf{PerfAlg}_{\Delta(A)}$. 
	\end{proof}
	Thus we have constructed factorizations $W: \mathsf{PerfAlg}_{\Delta(A)^{\operatorname{pf}}} \to \mathsf{CRP}_{A}$ and $U:  \mathsf{CRP}_A \to \mathsf{PerfAlg}_{\Delta(A)^{\operatorname{pf}}}$. It remains to show that the unit and counit isomorphisms preserve the relevant algebra structures, which establishes the required adjoint equivalence.
	\begin{proposition}
		For each perfect $\Delta(A)^{\operatorname{pf}}$-algebra $k$, the unit $\eta_k: k \to UW(k)$ is a $\Delta(A)^{\operatorname{pf}}$-algebra morphism. 
	\end{proposition}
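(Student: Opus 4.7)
The plan is to exploit the universal property of the perfection: since $UW(k)$ is a perfect $\underline{O}$-algebra, any $\underline{O}$-algebra morphism out of $\Delta(A)^{\operatorname{pf}}$ is determined by its restriction along $p_A : \Delta(A) \to \Delta(A)^{\operatorname{pf}}$. Writing $\mu : \Delta(A)^{\operatorname{pf}} \to k$ for the given structure map, $\mu_0 := \mu \circ p_A$, and $f := W(\mu_0) \circ \eta'_A : A \to W(k)$ for the resulting $A$-algebra structure on $W(k)$, the statement reduces to proving the equality of two maps $\Delta(A) \to UW(k)$: namely $\eta_k \circ \mu_0$ and the composite
\begin{equation*}
\varepsilon'_{UW(k)} \circ \Delta(\varepsilon_{W(k)})^{-1} \circ \Delta(f),
\end{equation*}
the latter being what $\nu_{W(k)} \circ p_A$ equals by Definition~\ref{Definition:Delta(A)pfStructure} applied to $B = W(k)$, with $\nu_{W(k)} : \Delta(A)^{\operatorname{pf}} \to UW(k)$ denoting the structure map on $UW(k)$.

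The key step is the identification $\Delta(\varepsilon_{W(k)})^{-1} = \Delta W(\eta_k)$, which follows from the triangle identity $\varepsilon_{W(k)} \circ W(\eta_k) = \operatorname{id}_{W(k)}$ for $W \dashv U$ together with the invertibility of $\varepsilon_{W(k)}$. Once this substitution is made, the rest is routine: naturality of $\varepsilon'$ slides $\eta_k$ outside $\varepsilon'_{UW(k)}$, leaving $\eta_k \circ \varepsilon'_k \circ \Delta(f)$; expanding $\Delta(f) = \Delta W(\mu_0) \circ \Delta(\eta'_A)$ and applying naturality of $\varepsilon'$ again slides $\mu_0$ out of $\varepsilon'_k \circ \Delta W(\mu_0)$; and the triangle identity $\varepsilon'_{\Delta(A)} \circ \Delta(\eta'_A) = \operatorname{id}_{\Delta(A)}$ for $\Delta \dashv W$ eliminates the remaining counit-unit pair, yielding $\eta_k \circ \mu_0$ as desired.

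The main obstacle is spotting the first substitution, which bridges the two adjunctions by converting an inverse counit for $W \dashv U$ into a unit that can interact with $\eta_k$ via naturality of $\varepsilon'$. Everything else amounts to routine bookkeeping between the two adjunctions.
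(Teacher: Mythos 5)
Your proposal is correct and follows essentially the same route as the paper: both reduce, via the universal property of the perfection (initiality of $\Delta(A)^{\operatorname{pf}}$ in $\mathsf{PerfAlg}_{\Delta(A)}$, using that $UW(k)$ is perfect), to checking the two maps agree after precomposition with $\Delta(A)\to\Delta(A)^{\operatorname{pf}}$, and both verify that agreement by combining Definition~\ref{Definition:Delta(A)pfStructure} with the identification $\Delta(\varepsilon_{W(k)})^{-1}=\Delta W(\eta_k)$, naturality of $\varepsilon'$, and the triangle identity $\varepsilon'_{\Delta(A)}\circ\Delta(\eta'_A)=\operatorname{id}$. The only difference is presentational: you argue equationally where the paper chases two commutative diagrams.
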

	\begin{proof}
		We have a commutative diagram
		\begin{equation*}
			\begin{tikzcd}[column sep = 7.5ex]
				\Delta(A)\arrow[rd, equal]\arrow[r, "\Delta(\eta'_A)"] & \Delta W\Delta (A)\arrow[r]\arrow[d, "\varepsilon'_{\Delta(A)}"] & \Delta W (\Delta(A)^{\operatorname{pf}})\arrow[r]\arrow[d, "\varepsilon'_{\Delta(A)^{\operatorname{pf}}}"] & \Delta W(k)\arrow[d, "\varepsilon'_k"] \\
				& \Delta(A)\arrow[r] & \Delta(A)^{\operatorname{pf}}\arrow[r] & k, 
			\end{tikzcd}
		\end{equation*}
		where $\Delta(A)^{\operatorname{pf}} \to k$ is the given structure map. The top row is by definition $\Delta$ applied to the structure map $A \to W(k)$. This diagram becomes the left square in the diagram
		\begin{equation*}
			\begin{tikzcd}[column sep = 10ex]
				\Delta(A) \arrow[r]\arrow[d] & \Delta W(k) \arrow[r, "\Delta W(\eta_k)"', shift right = 0.5ex]\arrow[d, "\varepsilon_k'"]& \Delta WUW(k) \arrow[d, "\varepsilon'_{UW(k)}"]\arrow[l, shift right = 0.5ex, "\Delta(\varepsilon_{W(k)})"']\\
				\Delta(A)^{\operatorname{pf}} \arrow[r]\arrow[rr, bend right = 5ex]& k \arrow[r, "\eta_k", "\sim"']& UW(k)
			\end{tikzcd}
		\end{equation*}
		in which only commutativity of the bottom semicircle is in question. Note that the outer rounded rectangle commutes by definition of the $\Delta(A)^{\operatorname{pf}}$-algebra structure on $UW(k)$. Since $k\xrightarrow{\sim}UW(k)$ is perfect via the isomorphism $\eta_k$, and since the two maps $\Delta(A)^{\operatorname{pf}} \to UW(k)$ forming the bottom semicircle are equalized on $\Delta(A)$, the bottom semicircle commutes because $\Delta(A)^{\operatorname{pf}}$ is initial in $\mathsf{PerfAlg}_{\Delta(A)}$. Thus $\eta_k$ is a $\Delta(A)^{\operatorname{pf}}$-algebra morphism.
	\end{proof}
	\begin{proposition}
		For each object $B$ of $\mathsf{CRP}_A$, the counit $\varepsilon_B: WU(B) \to B$ is an $A$-algebra morphism. 
	\end{proposition}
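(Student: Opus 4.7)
The plan is to unfold the definition of the $A$-algebra structure on $WU(B)$ from Definition~\ref{Definition:Delta(A)pfStructure} and then reduce the resulting composite to the given structure map $f: A \to B$ by a diagram chase using naturality of $\eta'$ together with the triangle identity for the adjunction $\Delta \dashv W$.

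Concretely, I would begin by writing out the $A$-algebra structure on $WU(B)$ as the composite
\begin{equation*}
A \xrightarrow{\eta'_A} W\Delta(A) \longrightarrow W(\Delta(A)^{\operatorname{pf}}) \longrightarrow WU(B),
\end{equation*}
where the last two arrows are $W$ applied to the structure map $\Delta(A) \to \Delta(A)^{\operatorname{pf}}$ and to the $\Delta(A)^{\operatorname{pf}}$-algebra structure on $U(B)$ from Definition~\ref{Definition:Delta(A)pfStructure}. That definition identifies the composite $\Delta(A) \to \Delta(A)^{\operatorname{pf}} \to U(B)$ with $\Delta(A) \to \Delta(B) \xrightarrow{\,\varepsilon'_{U(B)}\circ \Delta(\varepsilon_B)^{-1}\,} U(B)$, so applying $W$ lets me rewrite the displayed composite as
\begin{equation*}
A \xrightarrow{\eta'_A} W\Delta(A) \xrightarrow{W\Delta(f)} W\Delta(B) \xrightarrow{W\Delta(\varepsilon_B^{-1})} W\Delta WU(B) \xrightarrow{W\varepsilon'_{U(B)}} WU(B).
\end{equation*}

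Next, I would apply naturality of $\eta'$ twice: first along $f: A \to B$, which lets me replace the first two arrows by $A \xrightarrow{f} B \xrightarrow{\eta'_B} W\Delta(B)$; then along $\varepsilon_B^{-1}: B \to WU(B)$, which collapses $\eta'_B$ followed by $W\Delta(\varepsilon_B^{-1})$ into $\varepsilon_B^{-1}$ followed by $\eta'_{WU(B)}$. At this point the composite has the form $A \xrightarrow{f} B \xrightarrow{\varepsilon_B^{-1}} WU(B) \xrightarrow{\eta'_{WU(B)}} W\Delta WU(B) \xrightarrow{W\varepsilon'_{U(B)}} WU(B)$, and the right-hand triangle identity $W(\varepsilon'_{U(B)}) \circ \eta'_{WU(B)} = \operatorname{id}_{WU(B)}$ collapses the last three arrows. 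Postcomposing with $\varepsilon_B$ then yields $f$, as required.

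The main obstacle is purely bookkeeping: correctly identifying the right trapezium of Definition~\ref{Definition:Delta(A)pfStructure} as the precise input needed to rewrite the structure map $\Delta(A)^{\operatorname{pf}} \to U(B)$ in terms of $\varepsilon'_{U(B)}$ and $\Delta(\varepsilon_B)^{-1}$; once this rewriting is in hand the rest is formal naturality plus a single triangle identity, and no further use of strictness or $\pi$-adic completeness of $B$ is needed beyond what guarantees that $\varepsilon_B$ is an isomorphism in the first place.
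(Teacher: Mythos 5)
Your proposal is correct and follows essentially the same route as the paper: the paper's proof is the same chase presented as one commutative diagram, in which the right region is $W$ applied to the trapezium of Definition~\ref{Definition:Delta(A)pfStructure}, the bottom and outer faces are naturality of $\eta'$ at the structure map and at $\varepsilon_B$, and the top lens is the triangle identity, with invertibility of $\varepsilon_B$ closing the argument exactly as in your equational version.
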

	\begin{proof}
		Consider the diagram
		\begin{equation*}
			\begin{tikzcd}
				WU(B)\arrow[rrr, "\eta'_{WU(B)}", shift left = 0.5ex] \arrow[ddd, "\varepsilon_B"', "\sim"]& & & W \Delta W U(B)\arrow[ddd, "W\Delta(\varepsilon_B)", "\sim"']\arrow[lll, shift left = 0.5ex, "W(\varepsilon'_{U(B)})"]\\
				&  W(\Delta(A)^{\operatorname{pf}})\arrow[lu]\\
				& A\arrow[ld]\arrow[r, "\eta'_A"] & W\Delta(A)\arrow[rd]\arrow[lu]\\
				B \arrow[rrr, "\eta'_B"]& & & W\Delta (B)
			\end{tikzcd}
		\end{equation*}
		in which only commutativity of the left crooked quadrilateral is in question. Note that the right upper triangle commutes because it is $W$ applied to the structure map $\Delta(A) \to \Delta(A)^{\operatorname{pf}} \to U(B)$. Utilizing the facts that the outer rectangle commutes and that $\varepsilon_B$ is an isomorphism, we deduce that the left crooked quadrilateral commutes. Thus $\varepsilon_B$ is an $A$-algebra morphism. 
	\end{proof}
	\begin{theorem}
		Let $O$ be an object of $\mathsf{CRP}_{\mathcal{L}_{m,q}^{(t)}}$ and let $A$ be  a Lazardian $O$-algebra. The functors $W: \mathsf{PerfAlg}_{\Delta(A)^{\operatorname{pf}}} \to \mathsf{CRP}_A$ and $U: \mathsf{CRP}_A \to \mathsf{PerfAlg}_{\Delta(A)^{\operatorname{pf}}}$  along with the unit $\eta: 1_{\mathsf{PerfAlg}_{\Delta(A)^{\operatorname{pf}}}} \to UW$ and counit $\varepsilon: WU \to 1_{\mathsf{CRP}_A}$ form an adjoint equivalence of categories.
	\end{theorem}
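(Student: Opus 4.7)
The plan is to show that all the data for an adjoint equivalence on the restricted categories is inherited directly from the adjoint equivalence $(W,U,\eta,\varepsilon)$ on $\mathsf{CRP}_O$ and $\mathsf{PerfAlg}_{\underline{O}}$, using only the previous two propositions to know that everything happens over the correct base.

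First I would observe that the factorizations $W\colon \mathsf{PerfAlg}_{\Delta(A)^{\operatorname{pf}}}\to \mathsf{CRP}_A$ and $U\colon \mathsf{CRP}_A \to \mathsf{PerfAlg}_{\Delta(A)^{\operatorname{pf}}}$ have already been constructed in Definition~\ref{Definition:Delta(A)pfStructure} and the two propositions that follow it. Morphisms in $\mathsf{CRP}_A$ (respectively $\mathsf{PerfAlg}_{\Delta(A)^{\operatorname{pf}}}$) are in particular morphisms in $\mathsf{CRP}_O$ (respectively $\mathsf{PerfAlg}_{\underline{O}}$), so naturality of the restricted $\eta$ and $\varepsilon$ is just naturality of the original $\eta$ and $\varepsilon$ at these morphisms.

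Next I would verify that $\eta$ and $\varepsilon$ are morphisms in the new categories, which is precisely the content of the two preceding propositions: $\eta_k\colon k\to UW(k)$ is a $\Delta(A)^{\operatorname{pf}}$-algebra morphism for every perfect $\Delta(A)^{\operatorname{pf}}$-algebra $k$, and $\varepsilon_B\colon WU(B)\to B$ is an $A$-algebra morphism for every object $B$ of $\mathsf{CRP}_A$. Thus $\eta$ and $\varepsilon$ restrict to natural transformations $1_{\mathsf{PerfAlg}_{\Delta(A)^{\operatorname{pf}}}}\to UW$ and $WU\to 1_{\mathsf{CRP}_A}$ respectively.

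Finally, the two triangle identities
\begin{equation*}
\varepsilon_{W(k)}\circ W(\eta_k) = \operatorname{id}_{W(k)},\qquad U(\varepsilon_B)\circ \eta_{U(B)} = \operatorname{id}_{U(B)}
\end{equation*}
hold in $\mathsf{Alg}_O$ and $\mathsf{Alg}_{\underline{O}}$ respectively by hypothesis, and since each diagram is assembled from the same underlying morphisms we just showed respect the refined algebra structures, they hold in $\mathsf{CRP}_A$ and $\mathsf{PerfAlg}_{\Delta(A)^{\operatorname{pf}}}$. The fact that $\eta$ and $\varepsilon$ remain isomorphisms is immediate because being an isomorphism is detected on underlying rings and they were already isomorphisms in the ambient adjoint equivalence. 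The main (but mild) obstacle was getting the algebra structures compatible on the nose, which was already taken care of by the two propositions on $\eta_k$ and $\varepsilon_B$; with that done, the theorem is essentially a formal restriction argument.
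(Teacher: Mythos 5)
Your proposal is correct and follows essentially the same route as the paper: the paper's own proof is a one-line observation that the preceding two propositions show the unit and counit are natural isomorphisms of $\Delta(A)^{\operatorname{pf}}$-algebras and $A$-algebras, respectively, with everything else (naturality, triangle identities, invertibility) inherited from the ambient adjoint equivalence. You simply spell out these inherited verifications more explicitly.
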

	\begin{proof}
		The unit and counit are natural isomorphisms of $\Delta(A)^{\operatorname{pf}}$-algebras and $A$-algebras, respectively.
	\end{proof}
	This completes the proof of Theorem~\ref{Thm:LazardianURP}.

	\section{An explicit calculation in equal characteristic}
	
	\begin{definition}
		Let $A$ be a ring. Define the $A$-valued $m$th order Hasse-Schmidt derivation algebra by
		\begin{equation*}
			\operatorname{HS}^m(A) := \mathbf{Z}[d^{[n]}a\mid a \in A, 0\leq n \leq m]/{\sim},
		\end{equation*}
		where $\sim$ is the equivalence relation generated by the relations
		\begin{equation*}
			d^{[n]}(a + b) = d^{[n]}a + d^{[n]}b,\quad d^{[n]}(ab) = \sum_{i + j = n}d^{[i]}a\,d^{[j]}b,\quad d^{[0]}1 = 1.
		\end{equation*}
	\end{definition}
	
	Set $O_m := \mathbf{F}_p\llbracket\pi\rrbracket/(\pi^{m + 1})$ for some $m \in \mathbf{N}\cup \{\infty\}$, and  set $U:= -\otimes_{O_m}\mathbf{F}_p$. We will take the diagram
	\begin{equation*}
		\begin{tikzcd}[column sep = 25ex]
			\mathsf{Alg}_{O_m}\arrow[from = d, shift right = 1ex, "(-)\llbracket\pi\rrbracket/(\pi^{m + 1})"'] &\mathsf{CRP}_{O_m}\arrow[l,hook']\arrow[ d, shift left = 1ex, "U"]\\
			\mathsf{Alg}_{\mathbf{F}_p}\arrow[from = u, "\operatorname{HS}^m(-)\otimes_{\operatorname{HS}^m(O_m)}\mathbf{F}_p"', shift right= 1ex, "\dashv"]&\mathsf{PerfAlg}_{\mathbf{F}_p} \arrow[l,hook']\arrow[u, "(-)\llbracket\pi\rrbracket/(\pi^{m + 1})", shift left = 1ex, "\dashv"']
		\end{tikzcd}
	\end{equation*}
	along with the fact that the right column forms an adjoint equivalence for granted so that Theorem~\ref{Thm:LazardianURP} gives
	\begin{equation*}
		k^u = (\operatorname{HS}^m(A)\otimes_{\operatorname{HS}^m(O_m)}\mathbf{F}_p)^{\operatorname{pf}}.
	\end{equation*} 
	We have a simplification if $A = k\llbracket\pi\rrbracket/(\pi^{m + 1})$ for some $\mathbf{F}_p$-algebra $k$. In this case we treat elements $a = \sum_{i = 0}^m a_i\pi^i$ as a vector of coordinates $a_i \in k$ without mention. For the $m < \infty$ case, the notation $a_i$ for $i > m$ may inadvertently arise. In this case we take $a_i := 0$. 
	\begin{proposition}\label{Prop:HS(k)Surjects}
		Let $A:= k\llbracket\pi\rrbracket/(\pi^{m + 1})$. We have a commutative diagram
		\begin{equation*}
			\begin{tikzcd}
				\operatorname{HS}^m(k) \arrow[r,hook]\arrow[rd, two heads]& \operatorname{HS}^m(A) \arrow[r]\arrow[d]& \operatorname{HS}^m(A)\otimes_{\operatorname{HS}^m(O_m)}\mathbf{F}_p\arrow[ld, shift left = 0.5ex] \\
				&\operatorname{HS}^m(A)/(d^{[n]}\pi^i = \delta_{ni})\arrow[ru, shift left = 0.5ex]
			\end{tikzcd}
		\end{equation*}
		in which the two bottom right arrows are mutually inverse. 
	\end{proposition}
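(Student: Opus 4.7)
My approach is to decouple the two distinct contents of the diagram: that the bottom right arrows form a mutually inverse pair, and that $\operatorname{HS}^m(k)$ sits inside $\operatorname{HS}^m(A)$ as a split subring whose image surjects onto the quotient. Commutativity of the triangle and outer square then follows from the construction of each map.

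First I would unwind what the tensor product $\operatorname{HS}^m(A)\otimes_{\operatorname{HS}^m(O_m)}\mathbf{F}_p$ actually is. Applying the functor $\Delta := \operatorname{HS}^m(-)\otimes_{\operatorname{HS}^m(O_m)}\mathbf{F}_p$ to the identity map $O_m \to O_m$ yields $\Delta(O_m) = \mathbf{F}_p$, since $O_m$ is the initial $O_m$-algebra, so the unit $\eta'_{O_m}: O_m \to W\Delta(O_m) = \mathbf{F}_p\llbracket\pi\rrbracket/(\pi^{m+1}) = O_m$ is forced to be the identity. Comparing $\pi = \eta'_{O_m}(\pi) = \sum_n d^{[n]}\pi \cdot \pi^n$ then shows that the $\operatorname{HS}^m(O_m)$-algebra structure map $\phi: \operatorname{HS}^m(O_m) \to \mathbf{F}_p$ used to form the tensor product satisfies $\phi(d^{[n]}\pi) = \delta_{n1}$, and the Leibniz rule upgrades this to $\phi(d^{[n]}\pi^i) = \delta_{ni}$. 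Since $\operatorname{HS}^m(O_m)$ is generated as an $\mathbf{F}_p$-algebra by the elements $d^{[n]}\pi$, the kernel of $\phi$ is the ideal generated by the $d^{[n]}\pi^i - \delta_{ni}$, yielding the identification
\[
\operatorname{HS}^m(A)\otimes_{\operatorname{HS}^m(O_m)}\mathbf{F}_p = \operatorname{HS}^m(A)/(d^{[n]}\pi^i - \delta_{ni}),
\]
with the two comparison arrows being the canonical quotient maps.

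The remaining two assertions are routine. The functor $\operatorname{HS}^m$ applied to the split inclusion $k \hookrightarrow A$ (with retraction $A \to k$, $\pi \mapsto 0$) produces the required split injection $\operatorname{HS}^m(k) \hookrightarrow \operatorname{HS}^m(A)$. For surjectivity onto the quotient, any $a = \sum_{i \leq m} a_i\pi^i$ with $a_i \in k$ yields
\[
d^{[n]}a = \sum_{i}\sum_{j+l=n} d^{[j]}a_i \cdot d^{[l]}\pi^i,
\]
which collapses to $\sum_{i \leq n} d^{[n-i]}a_i$ once we impose $d^{[l]}\pi^i = \delta_{li}$; thus every generator of $\operatorname{HS}^m(A)$ reduces into the image of $\operatorname{HS}^m(k)$.

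The only point requiring genuine argument is the identification of $\phi$ in the first step, which is the single place where the $\Delta \dashv W$ adjunction of Section~\ref{Sec:LazardianJetAlgebras} enters; once in hand, the rest is formal manipulation with Hasse-Schmidt relations and split inclusions.
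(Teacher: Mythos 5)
Your identification of the structure map $\phi\colon \operatorname{HS}^m(O_m)\to\mathbf{F}_p$ via the unit of $\Delta\dashv W$ is consistent with what the paper uses implicitly, and for $m<\infty$ your argument is correct and essentially the paper's proof repackaged. The genuine gap is the step ``since $\operatorname{HS}^m(O_m)$ is generated as an $\mathbf{F}_p$-algebra by the elements $d^{[n]}\pi$, the kernel of $\phi$ is the ideal generated by the $d^{[n]}\pi^i-\delta_{ni}$.'' The proposition allows $m=\infty$, and that is exactly the case needed for the headline corollary about $k\llbracket\pi\rrbracket$; there $O_\infty=\mathbf{F}_p\llbracket\pi\rrbracket$, the additivity relation defining $\operatorname{HS}$ applies only to finite sums, and $d^{[n]}a$ for a general power series $a$ is a genuinely new generator not expressible in the $d^{[j]}\pi$ (already the degree-one piece, $\Omega_{\mathbf{F}_p\llbracket\pi\rrbracket/\mathbf{F}_p}$, is not generated by $d\pi$). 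So the generation claim is false for $m=\infty$ and the identification $\ker\phi=(d^{[n]}\pi^i-\delta_{ni})$, hence your identification of the tensor product with the quotient, is not yet justified in the case that matters. The same infinite-sum issue silently recurs in your surjectivity step, where you apply $d^{[n]}$ termwise to $a=\sum_i a_i\pi^i$, which for $m=\infty$ is not licensed by the defining relations.

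Both defects are repaired by the truncation computation that is the actual core of the paper's proof: write $a=\sum_{i=0}^{n}a_i\pi^i+\pi^{n+1}z$, note $d^{[n]}(\pi^{n+1}z)=\sum_{\alpha+\beta=n}d^{[\alpha]}\pi^{n+1}\,d^{[\beta]}z$, and observe that each $d^{[\alpha]}\pi^{n+1}$ with $\alpha\le n$ maps to $\delta_{\alpha,n+1}=0$ modulo the relations $d^{[n]}\pi^i=\delta_{ni}$, while $d^{[n]}(a_i\pi^i)\equiv d^{[n-i]}a_i$. This gives $d^{[n]}a\equiv\sum_{i\le n}d^{[n-i]}a_i$ for every $a\in A$ (and $\equiv a_n$ for $a\in O_m$), which simultaneously yields the surjectivity of $\operatorname{HS}^m(k)\to\operatorname{HS}^m(A)/(d^{[n]}\pi^i=\delta_{ni})$, the factorization of this quotient map through $\operatorname{HS}^m(A)\otimes_{\operatorname{HS}^m(O_m)}\mathbf{F}_p$ (the paper's ``universality of the coproduct''), and, if you still want it, the equality $\ker\phi=(d^{[n]}\pi^i-\delta_{ni})$ you asserted: the quotient of $\operatorname{HS}^m(O_m)$ by that ideal is generated over $\mathbf{F}_p$ by images of generators that all land in $\mathbf{F}_p$, so it equals $\mathbf{F}_p$. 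With that substitution your route and the paper's coincide; without it, the argument only covers $m<\infty$.
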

	\begin{proof}
		We show that the bottom left arrow is a surjection. Given $a \in A$, there exists $z \in A$ such that
		\begin{equation*}
			d^{[n]} a = d^{[n]} \left(\sum_{i = 0}^n a_i\pi^i + \pi^{n + 1}z\right) = \sum_{i = 0}^n d^{[n]}(a_i\pi^i) + d^{[n]}(\pi^{n+ 1}z).
		\end{equation*}
		The Leibniz rule and $d^{[\alpha]}\pi^{n + 1} = \delta_{\alpha,n + 1} = 0$ for  $\alpha < n + 1$ imply that
		\begin{equation*}
			d^{[n]}(\pi^{n + 1}z) = \sum_{\alpha+\beta = n}d^{[\alpha]}\pi^{n + 1}d^{[\beta]}z = 0.
		\end{equation*}
		On the other hand
		\begin{equation*}
			d^{[n]}(a_i\pi^i) = \sum_{\alpha + \beta = n} d^{[\alpha]}a_id^{[\beta]}\pi^i = d^{[n-i]}a_{i}. 
		\end{equation*}
		Therefore
		\begin{equation}
			d^{[n]}\sum_{i = 0}^m a_i\pi^i = \sum_{i = 0}^n d^{[n-i]}a_i,\label{Eqn:SurjectionFormula}
		\end{equation}
		establishing surjectivity. If $a \in O_m$, then Equation~(\ref{Eqn:SurjectionFormula}) implies that $d^{[n]}a \mapsto a_n$ because $d^{[\alpha]}n = 0$ for all $n \in \mathbf{Z}$ when  $\alpha > 0$. Thus universality of the coproduct gives the map $\operatorname{HS}^m(A)\otimes_{\operatorname{HS}^m(O_m)}\mathbf{F}_p \to \operatorname{HS}^m(A)/(d^{[n]}\pi^i = \delta_{ni})$. We have a map in the other direction because $d^{[n]}\pi^i\otimes 1 = 1\otimes \delta_{ni} = \delta_{ni}$. By inspection, these two maps are mutually inverse.
	\end{proof}
	Consider the diagram
	\begin{equation*}
		\begin{tikzcd}
			\operatorname{HS}^m(k) \arrow[r,hook]\arrow[rd, two heads]& \operatorname{HS}^m(A) \arrow[d] \\
			&\operatorname{HS}^m(A)/(d^{[n]}\pi^i = \delta_{ni})
		\end{tikzcd}
	\end{equation*}
	from Proposition~\ref{Prop:HS(k)Surjects}. We will construct a retraction of the surjection in this diagram, which is equivalent to constructing a retraction of $\operatorname{HS}^m(k) \hookrightarrow \operatorname{HS}^m(A)$ factoring through the quotient. The formula (\ref{Eqn:SurjectionFormula}) that we used to prove surjectivity is a promising candidate for our desired retraction, so long as we can show that it is a homomorphism. We first prove a universal identity.
	\begin{lemma}\label{Lemma:HSUniversalIdentity}
		In the polynomial ring $\mathbf{Z}[x_{ij},y_{ij} \mid i,j \in \mathbf{N}]$, we have
		\begin{equation*}
			\sum_{\gamma = 0}^n\sum_{\varepsilon = 0}^\gamma\sum_{\alpha = 0}^{n-\gamma}x_{\alpha \varepsilon} y_{n-\gamma-\alpha,\gamma-\varepsilon}= \sum_{t = 0}^n \sum_{i = 0}^t \sum_{j = 0}^{n-t} x_{t-i,i}y_{n-t-j,j}
		\end{equation*}
		for all $n \in \mathbf{N}$.
	\end{lemma}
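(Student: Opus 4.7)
The plan is to show that both sides are equal to the common quadruple sum
\[
\sum_{\substack{a,b,c,d\geq 0 \\ a+b+c+d = n}} x_{a,b}\,y_{c,d},
\]
via a bijective change of summation variables on each side.

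For the left-hand side, I would set $a = \alpha$, $b = \varepsilon$, $c = n-\gamma-\alpha$, $d = \gamma - \varepsilon$. Checking the constraints: $\alpha \in [0,n-\gamma]$ gives $a,c\geq 0$ with $a+c = n-\gamma$; $\varepsilon \in [0,\gamma]$ gives $b,d\geq 0$ with $b+d = \gamma$; and $\gamma \in [0,n]$ is equivalent to $b+d \in [0,n-(a+c) + (b+d)]$, so together we get exactly $a,b,c,d \geq 0$ and $a+b+c+d = n$. The inverse assignment $\gamma = b+d$, $\varepsilon = b$, $\alpha = a$ is well-defined on any such quadruple, so this is a bijection, and the summand $x_{\alpha,\varepsilon}y_{n-\gamma-\alpha,\gamma-\varepsilon}$ becomes $x_{a,b}y_{c,d}$.

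For the right-hand side, I would similarly set $a = t-i$, $b = i$, $c = n-t-j$, $d = j$. The constraints $i \in [0,t]$, $j \in [0,n-t]$, $t \in [0,n]$ give $a,b,c,d\geq 0$ with $a+b = t$ and $c+d = n-t$, hence $a+b+c+d = n$. The inverse $t = a+b$, $i = b$, $j = d$ is well-defined, and the summand $x_{t-i,i}y_{n-t-j,j}$ becomes $x_{a,b}y_{c,d}$.

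Both bijections identify the respective sums with the same expression indexed by nonnegative quadruples summing to $n$, so the two are equal. I expect no real obstacle here, as the identity is essentially a reindexing; the only care needed is in verifying that the constraint regions match up correctly, which is a purely combinatorial bookkeeping check. One could equivalently phrase the whole thing as a single generating-function identity in a polynomial ring with monomials indexed by $\mathbf{N}^2$, but the direct bijection is cleaner.
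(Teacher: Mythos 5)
Your proof is correct. Both changes of variables are genuine bijections onto the set of quadruples $(a,b,c,d)\in\mathbf{N}^4$ with $a+b+c+d=n$: on the left, $(a,b,c,d)=(\alpha,\varepsilon,n-\gamma-\alpha,\gamma-\varepsilon)$ with inverse $(\gamma,\varepsilon,\alpha)=(b+d,b,a)$, and on the right, $(a,b,c,d)=(t-i,i,n-t-j,j)$ with inverse $(t,i,j)=(a+b,b,d)$; in each case the summand transforms to $x_{a,b}y_{c,d}$, so both sides equal $\sum_{a+b+c+d=n}x_{a,b}\,y_{c,d}$. This takes a different route from the paper, which never introduces a common intermediate form: there the left-hand side is converted directly into the right-hand side by a chain of relabelings ($i:=\varepsilon$, $t:=\alpha$), shifts of the summation bounds by $i$, an explicit interchange of the order of summation with a careful case analysis of how the bounds on $\gamma$, $i$, $t$ constrain one another, and a final shift and relabeling $j:=\gamma$. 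Your symmetric-midpoint argument buys a shorter verification with almost no bound-tracking, and it makes transparent why the identity holds (both sides enumerate compositions of $n$ into four parts); the paper's direct transformation buys an explicit dictionary between the two index systems, at the cost of the more delicate bookkeeping you avoided.
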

	\begin{proof}
		Start with the left-hand side. Relabeling $i:= \varepsilon$ and $t:= \alpha$, we have
		\begin{equation*}
			\sum_{\gamma = 0}^n\sum_{\varepsilon = 0}^\gamma \sum_{\alpha = 0}^{n-\gamma} x_{\alpha\varepsilon}y_{n-\gamma-\alpha,\gamma-\varepsilon} =\sum_{\gamma = 0}^n \sum_{i = 0}^\gamma \sum_{t= 0}^{n-\gamma} x_{t,i}y_{n-t-\gamma,\gamma-i}.
		\end{equation*}
		Increasing the upper and lower bounds of the $t$-indexed sum by $i$ gives
		\begin{equation*}
			\sum_{\gamma = 0}^n\sum_{\varepsilon = 0}^\gamma \sum_{\alpha = 0}^{n-\gamma} x_{\alpha\varepsilon}y_{n-\gamma-\alpha,\gamma-\varepsilon} =\sum_{\gamma = 0}^n\sum_{i = 0}^\gamma \sum_{t= i}^{n-\gamma + i} x_{t-i,i}y_{n-t-\gamma + i,\gamma-i}.
		\end{equation*}
		We now wish to change the order of summation. We have the conditions
		\begin{equation*}
			0 \leq i \leq \gamma \leq n, \quad i \leq t \leq n - (\gamma-i)\leq n, \quad n-t - \gamma + i \geq 0.
		\end{equation*}
		Since $i = 0$ occurs, we have $0 \leq t \leq n$. We now determine how $i$ depends on $t$. Since $\gamma = 0$ occurs and $t-n \leq 0$, we have $0 \leq i \leq t$. It remains to check how $\gamma$ depends on $i$ and $t$. We have
		\begin{equation*}
			i \leq \gamma \leq n,\quad i \leq \gamma, \quad \gamma \leq i + n-t\leq n
		\end{equation*}
		since $i-t \leq 0$. Hence $i \leq \gamma \leq i + n-t$. Thus
		\begin{equation*}
			\sum_{\gamma = 0}^n\sum_{\varepsilon = 0}^\gamma \sum_{\alpha = 0}^{n-\gamma} x_{\alpha\varepsilon}y_{n-\gamma-\alpha,\gamma-\varepsilon} = \sum_{t = 0}^n \sum_{i = 0}^t \sum_{\gamma = i}^{i + n -t} x_{t-i,i}y_{n-t-\gamma + i,\gamma-i}. 
		\end{equation*}
		Decreasing the upper and lower bounds of the $\gamma$-index sum by $i$ gives 
		\begin{equation*}
			\sum_{\gamma = 0}^n\sum_{\varepsilon = 0}^\gamma \sum_{\alpha = 0}^{n-\gamma} x_{\alpha\varepsilon}y_{n-\gamma-\alpha,\gamma-\varepsilon} = \sum_{t = 0}^n \sum_{i = 0}^t \sum_{\gamma = 0}^{ n -t} x_{t-i,i}y_{n-t-\gamma ,\gamma},
		\end{equation*}
		and finally relabeling $j := \gamma$ gives
		\begin{equation*}
			\sum_{\gamma = 0}^n\sum_{\varepsilon = 0}^\gamma \sum_{\alpha = 0}^{n-\gamma} x_{\alpha\varepsilon}y_{n-\gamma-\alpha,\gamma-\varepsilon} =\sum_{t = 0}^n \sum_{i = 0}^t \sum_{j= 0}^{ n -t} x_{t-i,i}y_{n-t-j ,j},
		\end{equation*}
		which was to be shown.
	\end{proof}
	\begin{proposition}
		Let $A:=k\llbracket\pi\rrbracket/(\pi^{m + 1})$. The function $\phi: A \to \operatorname{HS}^m(k)\llbracket\pi\rrbracket/(\pi^{m + 1})$ defined by
		\begin{equation*}
			\phi(a):= \sum_{n = 0}^m \sum_{i = 0}^n d^{[n-i]}a_i\pi^n
		\end{equation*}
		is a ring homomorphism.
	\end{proposition}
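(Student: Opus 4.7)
The plan is to verify the three axioms of a ring homomorphism for $\phi$: preservation of $1$, additivity, and multiplicativity. Throughout, I use the unique decomposition $a = \sum_{i=0}^m a_i \pi^i$ with $a_i \in k$.

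The unit axiom $\phi(1) = 1$ reduces to the identity $d^{[n]}1 = \delta_{n,0}$ in $\operatorname{HS}^m(k)$, which follows by a routine induction on $n$ from the Leibniz rule applied to $1 = 1 \cdot 1$ together with the relation $d^{[0]}1 = 1$. Additivity is immediate, since $(a + b)_i = a_i + b_i$ in $A$ and each $d^{[n]}$ is additive in $\operatorname{HS}^m(k)$, so that the two double sums defining $\phi(a+b)$ and $\phi(a) + \phi(b)$ agree termwise.

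Multiplicativity is the crux. The strategy is to fix $N \in \{0, \ldots, m\}$, expand both $[\phi(ab)]_N$ and $[\phi(a)\phi(b)]_N$ as triple sums, and invoke Lemma~\ref{Lemma:HSUniversalIdentity}. Using $(ab)_i = \sum_{j+k=i} a_j b_k$ and the Leibniz rule gives
$$[\phi(ab)]_N = \sum_{i=0}^N \sum_{j+k=i} \sum_{\alpha + \beta = N - i} d^{[\alpha]}a_j \cdot d^{[\beta]}b_k,$$
while expanding the product in $\operatorname{HS}^m(k)\llbracket\pi\rrbracket/(\pi^{m+1})$ gives
$$[\phi(a)\phi(b)]_N = \sum_{n + n' = N} \sum_{i=0}^n \sum_{i'=0}^{n'} d^{[n-i]}a_i \cdot d^{[n'-i']}b_{i'}.$$
With the substitution $x_{p,q} := d^{[q]}a_p$ and $y_{p,q} := d^{[q]}b_p$, the first display becomes the left-hand side of Lemma~\ref{Lemma:HSUniversalIdentity} with $n = N$ (matching $\gamma = N - i$, $\varepsilon = \alpha$, and the lemma's $\alpha$ to $j$), while the second becomes the right-hand side (matching $t = n$, and the lemma's $i$ and $j$ to $n - i$ and $n' - i'$ respectively). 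The lemma then yields the equality $[\phi(ab)]_N = [\phi(a)\phi(b)]_N$ for every $N$.

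The main obstacle is cleanly tracking the simultaneous index substitutions that identify the two displays with the two sides of Lemma~\ref{Lemma:HSUniversalIdentity}; the combinatorial content of the equality is entirely absorbed by that lemma, so once the identification is made the conclusion follows without further computation.
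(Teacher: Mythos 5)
Your proof is correct and follows essentially the same route as the paper: unit and additivity are immediate, and multiplicativity is obtained by expanding the coefficient of each $\pi^N$ on both sides and specializing Lemma~\ref{Lemma:HSUniversalIdentity}. Your substitution $x_{p,q}:=d^{[q]}a_p$ transposes the index convention the paper uses ($x_{ij}\mapsto d^{[i]}a_j$), but your stated matching of indices is consistent and yields the same identity, so the difference is purely cosmetic.
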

	\begin{proof}
		We first observe that
		\begin{equation*}
			\phi(1) = \sum_{n=0}^m \sum_{i = 0}^n d^{[n-i]}\delta_{0i} \pi^n = \sum_{n = 0}^m (d^{[n]}1) \pi^n = 1,
		\end{equation*}
		noting that $d^{[n]}1 = 0$ for all $n > 0$. Additivity follows from
		\begin{equation*}
			\sum_{n = 0}^m \sum_{i = 0}^n d^{[n-i]}(a_i + b_i) \pi^n =\sum_{n = 0}^m \sum_{i = 0}^n d^{[n-i]}a_i \pi^n+\sum_{n = 0}^m \sum_{i = 0}^n d^{[n-i]}b_i \pi^n.
		\end{equation*}
		Multiplicativity is harder. On one hand, we calculate $\phi(ab)$ as
		\begin{equation*}
			 \sum_{n = 0}^m \sum_{\gamma = 0}^n d^{[n-\gamma]}\left(\sum_{\varepsilon +s = \gamma}a_\varepsilon b_s\right) \pi^n = \sum_{n = 0}^m \sum_{i = 0}^n\sum_{\varepsilon +s = \gamma}\sum_{\alpha + \beta = n-\gamma}d^{[\alpha]}a_\varepsilon \,d^{[\beta]}b_s \pi^n
		\end{equation*}
		Rewriting each sum in terms of a single increasing variable gives
		\begin{equation*}
			\phi(ab) = \sum_{n = 0}^m \sum_{\gamma = 0}^n \sum_{\varepsilon = 0}^\gamma \sum_{\alpha = 0}^{n-\gamma}d^{[\alpha]}a_{\varepsilon}\,d^{[n-\gamma - \alpha]}b_{\gamma-\varepsilon}\pi^n.
		\end{equation*}
		On the other hand, we calculate $\phi(a)\phi(b)$ as 
		\begin{equation*}
			 \sum_{n = 0}^m \sum_{i = 0}^n d^{[n-i]}a_i \pi^n\sum_{n = 0}^m \sum_{i = 0}^n d^{[n-i]}b_i \pi^n = \sum_{n = 0}^m \sum_{t + N  = n}\sum_{i = 0}^t\sum_{j = 0}^N d^{[t-i]}a_i\,d^{[N-j]}b_j \pi^n.
		\end{equation*}
		Rewriting each sum in terms of a single increasing variable gives
		\begin{equation*}
			\phi(a)\phi(b) = \sum_{n = 0}^m \sum_{t = 0}^n\sum_{i = 0}^t \sum_{j = 0}^{n-t} d^{[t-i]}a_i\,d^{[n-t-j]}b_j\pi^n.
		\end{equation*}
		But now $\phi(ab) = \phi(a)\phi(b)$ follows from the universal identity in Lemma~\ref{Lemma:HSUniversalIdentity} by taking $x_{ij}\mapsto d^{[i]}a_j$ and $y_{ij} \mapsto d^{[i]}b_j$. 
	\end{proof}
	\begin{proposition}
		Let $A:= k\llbracket\pi\rrbracket/(\pi^{m + 1})$. The transpose $\overline{\phi}: \operatorname{HS}^m(A)\to\operatorname{HS}^m(k)$ of the  ring homomorphism $\phi: A \to \operatorname{HS}^m(k)\llbracket\pi\rrbracket/(\pi^{m + 1})$ defined by
		\begin{equation*}
			\phi(a):= \sum_{n = 0}^m \sum_{i = 0}^n d^{[n-i]}a_i\pi^n
		\end{equation*}
		fits into the commutative diagram
		\begin{equation*}
			\begin{tikzcd}
				\operatorname{HS}^m(k) \arrow[r,hook, shift right = 0.5ex]\arrow[rd, two heads, shift left = 0.5ex, "\rho"]& \operatorname{HS}^m(A)\arrow[l, shift right = 0.5ex,"\overline{\phi}"'] \arrow[d] \\
				&\operatorname{HS}^m(A)/(d^{[n]}\pi^i = \delta_{ni})\arrow[lu, shift left = 0.5ex, "\overline{\phi}^\#"]
			\end{tikzcd}
		\end{equation*}
		in which $\overline{\phi}^\#$ is a retraction of $\rho$. Hence $\rho$ is injective and is thus an isomorphism.
	\end{proposition}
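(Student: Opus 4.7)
The plan is to use the adjunction $\operatorname{HS}^m \dashv (-)\llbracket\pi\rrbracket/(\pi^{m+1})$ (which is what the word \emph{transpose} refers to) to obtain an explicit formula for $\overline{\phi}$, and then verify on generators the two claimed properties: factorization through the quotient and retraction of $\rho$. The universal property of $\operatorname{HS}^m(A)$ tells us that $\overline{\phi}$ is uniquely determined by the condition that its coefficient-of-$\pi^n$ expansion agrees with $\phi$, i.e.\ $\overline{\phi}(d^{[n]}a)$ is the coefficient of $\pi^n$ in $\phi(a)$. Given the formula for $\phi$, this amounts to the explicit description
\begin{equation*}
\overline{\phi}(d^{[n]}a) = \sum_{i=0}^n d^{[n-i]}a_i
\end{equation*}
where $a_i \in k$ are the coefficients of $a \in A$.

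First I would verify that $\overline{\phi}$ factors through the quotient. For this it suffices to check that $\overline{\phi}$ kills $d^{[n]}\pi^i - \delta_{ni}$ for every $i, n$. Applying the explicit formula with $a = \pi^j$, whose only nonzero coordinate is $a_j = 1$, gives $\overline{\phi}(d^{[n]}\pi^j) = d^{[n-j]}1$ when $j \leq n$ and $0$ otherwise; since $d^{[0]}1 = 1$ and $d^{[k]}1 = 0$ for $k > 0$ in the Hasse--Schmidt algebra, this equals $\delta_{nj}$ as required. Hence $\overline{\phi}$ descends to a ring homomorphism $\overline{\phi}^\#: \operatorname{HS}^m(A)/(d^{[n]}\pi^i = \delta_{ni}) \to \operatorname{HS}^m(k)$.

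Next I would verify that $\overline{\phi}^\# \circ \rho = \operatorname{id}_{\operatorname{HS}^m(k)}$. Since $\rho$ factors through $\operatorname{HS}^m(k) \hookrightarrow \operatorname{HS}^m(A)$, and since the composition of this inclusion with $\overline{\phi}$ equals $\overline{\phi}^\# \circ \rho$, it suffices to check that $\overline{\phi}$ restricts to the identity on $\operatorname{HS}^m(k)$. For $a \in k \subset A$ we have $a_0 = a$ and $a_i = 0$ for $i \geq 1$, so the explicit formula collapses to $\overline{\phi}(d^{[n]}a) = d^{[n]}a$, as desired. Hence $\overline{\phi}^\#$ is a retraction of $\rho$.

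Finally, the conclusion that $\rho$ is an isomorphism is immediate: the existence of a retraction gives injectivity, and surjectivity has already been established in Proposition~\ref{Prop:HS(k)Surjects}. I do not expect any serious obstacle in this argument, since the explicit formula for $\overline{\phi}$ makes the verifications a matter of substituting special values; the one subtle point is the invocation of the adjunction to obtain that $\overline{\phi}$ is a well-defined ring map in the first place, and this is precisely where the multiplicativity of $\phi$ (established via Lemma~\ref{Lemma:HSUniversalIdentity}) is being consumed.
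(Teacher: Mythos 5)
Your proposal is correct and follows essentially the same route as the paper's proof: read off the explicit formula $\overline{\phi}(d^{[n]}a)=\sum_{i=0}^n d^{[n-i]}a_i$ from transposing $\phi$, check $\overline{\phi}(d^{[n]}\pi^t)=\delta_{nt}$ to get the factorization $\overline{\phi}^\#$, and check that $\overline{\phi}$ is the identity on generators coming from $k$ to get the retraction, with injectivity plus the surjectivity from Proposition~\ref{Prop:HS(k)Surjects} giving the isomorphism. The only difference is cosmetic (order of the two verifications and your slightly more explicit appeal to the adjunction for well-definedness of $\overline{\phi}$).
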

	\begin{proof}
		For $a \in A$, we have
		\begin{equation*}
			\overline{\phi}(d^{[n]}a)= \sum_{i = 0}^n d^{[n-i]} a_i. 
		\end{equation*}
		On the other hand an element $a \in k$ has coordinates $a_i = a\delta_{i0}$ when viewed as an element of $A$. Thus
		\begin{equation*}
			\overline{\phi}(d^{[n]}a) = \sum_{i = 0}^n d^{[n-i]}a_i = d^{[n]}a_0 = d^{[n]}a,
		\end{equation*}
		proving that $\overline{\phi}$ is a retraction to $\operatorname{HS}^m(k) \to \operatorname{HS}^m(A)$. 
		
		For the factorization through the quotient, we calculate
		\begin{equation*}
			d^{[n]}\pi^t = d^{[n]}\sum_{i = 0}^m \delta_{it} \pi^t \mapsto  \sum_{i = 0}^n d^{[n-i]}\delta_{it}. 
		\end{equation*}
		If $n < t$, then $\delta_{it} = 0$ for all $i$ and $d^{[n]}\pi^t \mapsto 0 = \delta_{nt}$. Else $ n \geq t$, which is to say that $n -t\geq 0$, and
		\begin{equation*}
			d^{[n]}\pi^t \mapsto d^{[n-t]}1 = \delta_{nt}. 
		\end{equation*}
		So $\overline{\phi}(d^{[n]}\pi^t) = \delta_{nt}$, inducing the factorization $\overline{\phi}^\#$ through the quotient. 
	\end{proof}
	\begin{corollary}\label{Cor:ExplicitPowerSeriesCalc}
		The universal residual perfection of $A := k\llbracket\pi\rrbracket/(\pi^{m + 1})$ is 
		\begin{equation*}
			A^u = \operatorname{HS}^m(k)^{\operatorname{pf}}\llbracket\pi\rrbracket/(\pi^{m + 1})
		\end{equation*} 
		with structure map
		\begin{equation*}
			a  = \sum_{n = 0}^m a_i \pi^i \mapsto \sum_{n = 0}^m \left(\sum_{i = 0}^n d^{[n-i]}a_i \right)\pi^n, 
		\end{equation*}
		inducing the inclusion $k \to \operatorname{HS}^m(k)^{\operatorname{pf}}: \overline{a}\mapsto d^{[0]}\overline{a}$ on residue rings.
	\end{corollary}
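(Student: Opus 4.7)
The plan is to combine Theorem~\ref{Thm:LazardianURP}, specialized to the equal characteristic setup fixed at the start of this section, with the isomorphism $\rho$ established in the preceding propositions, and then to trace the unit of the adjunction $\Delta \dashv W$ through that isomorphism.

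First I would apply Theorem~\ref{Thm:LazardianURP} with $O = O_m$, $W = (-)\llbracket\pi\rrbracket/(\pi^{m+1})$, and $\Delta = \operatorname{HS}^m(-)\otimes_{\operatorname{HS}^m(O_m)}\mathbf{F}_p$ to obtain $A^u = W(\Delta(A)^{\operatorname{pf}}) = \Delta(A)^{\operatorname{pf}}\llbracket\pi\rrbracket/(\pi^{m+1})$. Next I would identify $\Delta(A)$ with $\operatorname{HS}^m(k)$: by Proposition~\ref{Prop:HS(k)Surjects}, $\Delta(A)$ is isomorphic to $\operatorname{HS}^m(A)/(d^{[n]}\pi^i = \delta_{ni})$, and the natural map $\rho$ from $\operatorname{HS}^m(k)$ to this quotient is surjective; the subsequent proposition constructs the retraction $\overline{\phi}^\#$ via the transpose of $\phi$, forcing $\rho$ to be an isomorphism. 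Passing to perfections and applying $W$ then delivers $A^u = \operatorname{HS}^m(k)^{\operatorname{pf}}\llbracket\pi\rrbracket/(\pi^{m+1})$.

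To identify the structure map, I would trace the unit of the adjunction. The $A$-algebra structure on $A^u$ is the composition $A \xrightarrow{\eta'_A} W\Delta(A) \to W(\Delta(A)^{\operatorname{pf}})$, where $\eta'_A(a) = (d^{[n]}a)_n$. Transporting along $\rho^{-1} = \overline{\phi}^\#$, the coordinate $d^{[n]}a$ corresponds to $\sum_{i=0}^n d^{[n-i]}a_i \in \operatorname{HS}^m(k)$, so the image of $a = \sum a_i\pi^i$ in $A^u$ is the power series $\sum_{n=0}^m \bigl(\sum_{i=0}^n d^{[n-i]}a_i\bigr)\pi^n$, which is the asserted formula. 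Reducing modulo $\pi$ retains only the $n=0$ coordinate, and for $\overline{a}\in k$ viewed inside $A$ with coordinates $a_i = \overline{a}\delta_{i0}$ this collapses to $d^{[0]}\overline{a}$.

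I do not anticipate any serious obstacle, since the delicate computational work has already been absorbed into the universal identity of Lemma~\ref{Lemma:HSUniversalIdentity} and the construction of the multiplicative map $\phi$ in the preceding propositions; the corollary reduces to a bookkeeping consequence of the framework of Theorem~\ref{Thm:LazardianURP} once $\Delta(A)$ has been pinned down.
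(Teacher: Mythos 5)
Your proposal matches the paper's argument: the paper's proof of this corollary is exactly the one-line invocation of the isomorphism $\operatorname{HS}^m(k)\xrightarrow{\sim}\operatorname{HS}^m(A)\otimes_{\operatorname{HS}^m(O_m)}\mathbf{F}_p$ supplied by the preceding propositions (the surjection $\rho$ plus the retraction $\overline{\phi}^\#$), fed into the specialization of Theorem~\ref{Thm:LazardianURP} fixed at the start of the section, with the structure map read off from the formula $d^{[n]}a\mapsto\sum_{i=0}^n d^{[n-i]}a_i$ just as you trace it. Your write-up is simply a more explicit unwinding of the same bookkeeping, so it is correct and takes essentially the same route.
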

	\begin{proof}
		We have $ \operatorname{HS}^m(k)\xrightarrow{\sim}\operatorname{HS}^m(A)\otimes_{\operatorname{HS}^m(O_m)}\mathbf{F}_p $.
	\end{proof}
	Since the Cohen structure theorem implies that every positive characteristic complete discrete valuation ring $A$ with residue field $k$ is noncanonically of the form $ k\llbracket\pi\rrbracket\xrightarrow{\sim}A$, Borger's calculation that $k^u = k[u_{t,n}\mid t \in T,n\geq 1]^{\operatorname{pf}}$ for any choice of lifted $p$-basis $T \subset A$ of $k$ comes down to the fact that the elements $d^{[n]}\overline{t} \in \operatorname{HS}(k)$ over all $t \in T$ and $n \geq 1$ form an algebraically independent $k$-algebra generating set for $\operatorname{HS}(k)$.

	\bibliography{references}
	\bibliographystyle{alpha}

\end{document}